\definecolor{blue}{rgb}{0,0.5,0.5}
\definecolor{ocean}{rgb}{0.00,0.26,0.50}
\newtheorem{theorem}{Theorem}[section]
\newtheorem{proposition}[theorem]{Proposition}
\newtheorem{definition}[theorem]{Definition}
\newtheorem{remark}[theorem]{Remark}
\newtheorem{example}[theorem]{Example}
\newtheorem{corollary}[theorem]{Corollary}
\newtheorem{lemma}[theorem]{Lemma}
\def\<{\,<\!}
\def\>{\!>\,}
\begin{document}
	
\title[On Combination Theorems and Bowditch Boundaries]{On Combination Theorems and Bowditch Boundaries of Relatively Hyperbolic TDLC Groups}
	
\author{Swarnali Datta, Arunava Mandal, and Ravi Tomar}

\address{Department of Mathematics,
Indian Institute of Technology Roorkee,
Uttarakhand 247667, India}
\email{swarnali\_d1@ma.iitr.ac.in}

\address{Department of Mathematics,
Indian Institute of Technology Roorkee,
Uttarakhand 247667, India}
\email{arunava@ma.iitr.ac.in}

\address{Beijing International Center for Mathematical Research, Peking University, No. 5 Yiheyuan Road Haidian District, Beijing, P.R.China 100871}
\email{ravitomar547@gmail.com}	

	\begin{abstract}
    Based on the work of Farb, Bowditch, and Groves-Manning on discrete relatively hyperbolic groups, we introduce an approach to relative hyperbolicity for totally disconnected locally compact (TDLC) groups. For compactly generated TDLC groups, we prove that this notion is equivalent to the one introduced by Arora-Pedroza. Let $G=A\ast_C B$ or $G=A\ast_C$ where $A$ and $B$ are relatively hyperbolic TDLC groups and $C$ is compact. We prove that $G$ is a relatively hyperbolic TDLC group and give a construction of the Bowditch boundary of $G$. As a consequence, we prove that if the rough ends of $G$ are infinite, then the topology of the Bowditch boundary of $G$ is uniquely determined by the topology of the Bowditch boundary of $A$ and $B$. Further, we show that if a relatively hyperbolic TDLC group has one rough end, then its Bowditch boundary is connected. Finally, we show that if the Gromov boundary of a hyperbolic TDLC group $G$ is totally disconnected, then $G$ splits as a finite graph of compact groups.
	\end{abstract}
\maketitle
    \subjclass{\it 2020 Mathematics Subject Classification:} Primary 20F65, 20F67; Secondary 22D05 

\keywords{\bf Keywords:} Hyperbolic and relatively hyperbolic groups, TDLC groups, Graphs of topological groups, Gromov boundary, Bowditch boundary.

\setcounter{tocdepth}{1}
\tableofcontents

 \section{Introduction}
In his seminal essay, Gromov \cite{gromov-hypgps} treated infinite groups as geometric objects and introduced hyperbolic and relatively hyperbolic groups. One of the central themes of geometric group theory is to prove combination theorems for groups having negative curvature properties. Bestvina-Feighn \cite{BF} proved a celebrated combination theorem for a finite graph of hyperbolic groups. Motivated by the work of Bestvina-Feighn, several authors proved combination theorems for graphs of hyperbolic and relatively hyperbolic groups \cite{ilyakapovichcomb}, \cite{dahmani-comb}, \cite{mahan-reeves}, \cite{mahan-sardar},
\cite{martin1}, \cite{tomar}.

The article is a part of a program of generalizing geometric techniques in the study of discrete groups to a larger class of totally disconnected locally compact groups, which includes algebraic groups over non-Archimedean local fields, and automorphism groups of locally finite graphs. A locally compact group is said to be {\em totally disconnected} if the identity element is its own connected component. From now on, we abbreviate totally disconnected locally compact groups as TDLC groups. A TDLC group has been a topic of interest in the last three decades after the foundational work by G. Willis \cite{willis-tdlc}, pioneering insights into tidy subgroups and the scale function. In this article, we extend classical geometric techniques \text{---} originally developed for discrete groups \text{---} to the broader category of TDLC groups. Specifically, we prove combination theorems for relatively hyperbolic TDLC groups.  Moreover, we introduce and adapt the two different notions of a relatively hyperbolic TDLC group, proving their equivalence with the existing notion defined by Arora-Pedroza \cite{arora-pedroja}. In turn, this enables us to define the notion of the boundary of a relatively hyperbolic TDLC group. Then, we further study the topology of the boundary of graphs of relatively hyperbolic TDLC groups. This article, in our view, builds a foundation for further study of relatively hyperbolic TDLC groups. 
 
For infinite finitely generated groups, the Cayley graph is a fundamental instrument in studying their geometry. In the same spirit, to study large-scale properties of TDLC groups, Kr\"{o}n and M\"{o}ller \cite{kron-moller} introduced the notion of a  Cayley-Abels graph (with the name rough Cayley graphs but now commonly known as Cayley-Abels graph) for compactly generated TDLC groups, see Subsection \ref{subsection-cayley-abels-graphs} for the definition and further details. Most importantly, they show that any two Cayley-Abels graphs of a compactly generated TDLC group are quasiisometric. This leads to defining the notion of a hyperbolic TDLC group: a TDLC group $G$ is said to be {\em hyperbolic} if $G$ is compactly generated and some (hence any) Cayley-Abels graph of $G$ is a Gromov hyperbolic space. Let $\mathcal H$ be a finite collection of open subgroups of a TDLC group $G$. Recently, Arora-Pedroza \cite{arora-pedroja} defined a relative version of a Cayley-Abels graph for $G$ with respect to $\mathcal H$ (Definition \ref{definition-cayley-abels-graph-with-resepct-to}). Further, they introduced the notion of relative hyperbolicity in this context, adapting the notion of relative hyperbolicity developed by Osin \cite{osin-book} and Bowditch \cite{bowditch-relhyp} in the discrete set-up. We prove the following in this framework, which is well known in the discrete setting. {\em Throughout the paper, on a finite graph of topological groups, we use the topology as in Proposition \ref{proposition-topology-graphs-of-groups}}.

\begin{theorem}\label{rel-combination-theorem-amalgam}
Suppose $A$ and $B$ are TDLC groups which are compactly generated relative to a finite collection of open subgroups $\mathcal H_A$ and $\mathcal H_B$, respectively. Suppose $G=A\ast_C B$ is an amalgamated free product such that $A$ and $B$ are hyperbolic relative to $\mathcal H_A$ and $\mathcal H_B$, respectively, and $C$ is compact. Then, $G$ is hyperbolic relative to $\mathcal H_A\cup\mathcal H_B$.
\end{theorem}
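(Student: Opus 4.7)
The plan is to follow the Groves--Manning cusped-space approach combined with a Bestvina--Feighn-style tree-of-spaces argument, adapted to the TDLC setting. I would start by choosing compact open subgroups $K_A \leq A$ and $K_B \leq B$ each containing $C$ --- such subgroups exist because every compact subgroup of a TDLC group lies in some compact open subgroup --- together with finite relative generating sets realizing the relative Cayley--Abels graphs $\Gamma_A$ of $A$ and $\Gamma_B$ of $B$ with respect to $\mathcal H_A$ and $\mathcal H_B$. By hypothesis, the cusped spaces $X_A$ and $X_B$ obtained by attaching combinatorial horoballs along peripheral cosets are Gromov hyperbolic.

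Next I would construct, via Proposition \ref{proposition-topology-graphs-of-groups}, a compact open subgroup $K \leq G$ with $K \cap A = K_A$ and $K \cap B = K_B$, together with a finite relative generating set $S_G$ for $G$ modulo $\mathcal H_A \cup \mathcal H_B$ assembled from the generating data for $A$ and $B$. The Bass--Serre tree $T$ of the splitting $G = A \ast_C B$ organizes the resulting cusped space $X_G$ as a tree of spaces over $T$: vertex spaces are $G$-translates of $X_A$ and $X_B$, and edge spaces are cosets of $C$, which have uniformly bounded diameter in $X_G$ since $C$ is compact.

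The core step is to prove that $X_G$ is Gromov hyperbolic. Because the edge spaces are uniformly bounded, the Bestvina--Feighn hallways-flare condition is trivially satisfied, so the standard tree-of-hyperbolic-spaces argument yields hyperbolicity; alternatively one may invoke Dahmani's combination theorem \cite{dahmani-comb} applied to the action of $G$ on $T$, whose vertex stabilizers are relatively hyperbolic and whose edge stabilizers are compact. Once $X_G$ is known to be hyperbolic, identifying it up to quasi-isometry with the intrinsic cusped space of $(G, \mathcal H_A \cup \mathcal H_B)$ yields the claimed relative hyperbolicity.

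The main technical obstacle I expect is precisely this last identification: one must choose $K$ and $S_G$ carefully enough that the horoball structure inherited from the Bass--Serre decomposition agrees, up to quasi-isometry, with the one defined by the intrinsic compactly generated relative structure of $G$. The compactness of $C$ is what makes this matching possible, since it forces the attaching loci to be uniformly bounded and ensures that no new peripheral subgroup needs to be added to $\mathcal H_A \cup \mathcal H_B$.
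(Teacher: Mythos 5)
Your route is essentially the paper's \emph{alternative} proof, which appears in Subsection \ref{bowditch-boudary-construction-amalgam}: there the authors build a tree of augmented (cusped) Cayley--Abels graphs over the Bass--Serre tree, note that the edge spaces are single points/bounded sets because $C$ is compact, and invoke Bestvina--Feighn to get hyperbolicity of the assembled cusped space, which is then identified with the augmented Cayley--Abels graph of $G$ with respect to $\mathcal H_A\cup\mathcal H_B$ (the identification is unproblematic by Proposition \ref{well-definedness-augmented-cayley-graph}, since the assembled graph is built from the cosets of the peripherals with horoballs attached exactly where the intrinsic construction would attach them). The paper's \emph{primary} proof, in Subsection \ref{subsection-rel-hyp-amalgam-combination-construction}, instead works with the Arora--Pedroza relative Cayley--Abels graphs (Definition \ref{definition-tdrh-1}, TDRH-I): it glues copies of the fine hyperbolic graphs $X_A$ and $X_B$ along the cosets of $C$, checks in Lemma \ref{lemma-rel-cayley-abels-graph-amalgam} that the result is a Cayley--Abels graph of $G$ relative to $\mathcal H_A\cup\mathcal H_B$, gets hyperbolicity from Bestvina--Feighn with singleton edge spaces, and then verifies fineness directly. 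What the TDRH-I route buys is generality, and this is where your proposal has a genuine gap.

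The theorem only assumes that $A$ and $B$ are compactly generated \emph{relative to} $\mathcal H_A$ and $\mathcal H_B$, and the peripheral subgroups are merely open; the cusped-space machinery (Definition \ref{definition-tdrh-II}) requires $A$, $B$, and every $H\in\mathcal H_A\cup\mathcal H_B$ to be compactly generated, so that locally finite Cayley--Abels graphs and their horoball augmentations exist. Your argument therefore proves the theorem only under these extra hypotheses, and in addition needs the equivalence TDRH-I $\iff$ TDRH-II (Theorem \ref{theorem-rh-equivalence}), which the paper likewise establishes only in the compactly generated setting. The paper is explicit that the tree-of-augmented-spaces construction ``gives a different proof of Theorem \ref{rel-combination-theorem-amalgam} for compactly generated relatively hyperbolic TDLC groups'' --- not of the statement as written. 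A smaller point: your requested compact open subgroup $K\leq G$ with $K\cap A=K_A$ and $K\cap B=K_B$ generally cannot exist with $K_A,K_B$ properly containing $C$, since a compact subgroup of $G$ fixes a vertex of the Bass--Serre tree and hence lies in a conjugate of $A$ or of $B$, forcing $K\cap B\subseteq C$ whenever $K\leq A$; this is harmless, because $C$ is itself a compact open subgroup of $A$, $B$, and $G$, and one should simply take $U=C$ as the base compact open subgroup, as the paper does.
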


For HNN extensions, we prove the following:

\begin{theorem}\label{theorem-rel-combination-theorem-hnn}
Suppose $A$ is a TDLC group which is compactly generated relative to a finite collection of open subgroups $\mathcal H$.
Suppose $G=A\ast_C$ is an HNN extension such that $A$ is hyperbolic relative to $\mathcal H$ and $C$ is compact. Then, $G$ is hyperbolic relative to $\mathcal H.$      
\end{theorem}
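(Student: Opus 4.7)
My approach is to mirror the proof of Theorem~\ref{rel-combination-theorem-amalgam} for the HNN case, using the Bass-Serre tree of the splitting $G=A\ast_C$ as the key scaffolding. The Bass-Serre tree $T$ carries a $G$-action with a single orbit of vertices (stabilizer $A$) and a single orbit of edges (stabilizer $C$, which is compact). Because edge stabilizers are compact, the qualitative picture is identical to the amalgamated case, and the combination argument from Theorem~\ref{rel-combination-theorem-amalgam} should transfer with only bookkeeping changes.

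Concretely, I first build a relative Cayley-Abels graph $\Gamma_G$ of $G$ with respect to $\mathcal H$. Take a compact open subgroup $U$ of $A$ containing both images of $C$ (possible because both are compact subgroups of the TDLC group $A$), and use as a compact relative generating set of $G$ the union of a compact generating set of $A$ relative to $\mathcal H$ with the stable letter $t^{\pm1}$. The resulting graph $\Gamma_G$ admits a natural tree-of-spaces decomposition over $T$: each vertex of $T$ carries a copy of a relative Cayley-Abels graph of $A$, and edges of $T$ are realized as $t$-labeled edges of $\Gamma_G$ between such copies. Hyperbolicity of $\Gamma_G$ then follows from hyperbolicity of each vertex copy (given by hypothesis) and compactness of edge stabilizers, via the standard combination argument for trees of hyperbolic spaces with uniformly bounded connecting pieces; the peripheral structure (infinite-valence vertices modelling $\mathcal H$-cosets in $G$) is inherited locally from the vertex copies, since every peripheral coset of $G$ lies inside a single vertex stabilizer via Bass-Serre normal form.

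The principal technical obstacle, just as in the amalgamated case, is verifying the fineness condition in the sense of Arora-Pedroza. In the HNN setting this has the extra subtlety that $t$ conjugates one copy of $C$ into another copy inside \emph{the same} group $A$, so one must ensure that the two embeddings of $C$ into $U$ are handled cleanly when bounding the number of circuits through each infinite-valence vertex. I expect that a Bass-Serre normal form argument --- together with the fact that any peripheral coset of $G$ lies inside a unique vertex stabilizer, so its incident circuits are controlled by the fineness of $\Gamma_A$ --- reduces this to the fineness already established for $A$. Once fineness is in place, the equivalence with the Arora-Pedroza definition (noted earlier in the introduction) completes the proof.
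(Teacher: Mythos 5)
Your proposal follows essentially the same route as the paper: build a tree of spaces over the Bass--Serre tree of $G=A\ast_C$ whose vertex spaces are copies of a relative Cayley--Abels graph of $A$, check that the result is a Cayley--Abels graph of $G$ with respect to $\mathcal H$, get hyperbolicity from the Bestvina--Feighn combination theorem (the connecting pieces are single edges since $C$ is compact), and inherit fineness from the vertex spaces because every embedded path decomposes along the bridge edges lying over the edges of the tree. The paper's proof (Subsection~3.2 together with the proof of Theorem~\ref{rel-combination-theorem-amalgam}) is exactly this.

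One step of your writeup is, however, false as stated: you cannot in general ``take a compact open subgroup $U$ of $A$ containing both images of $C$,'' and the parenthetical justification (that both are compact subgroups of a TDLC group) does not hold. Two compact open subgroups of a TDLC group need not lie in a common compact subgroup: in $A={\rm SL}(2,\mathbb Q_p)$ the two copies of ${\rm SL}(2,\mathbb Z_p)$ occurring in the amalgam decomposition of Example~\ref{example-amalgam-hyperbolic} are compact open but generate all of ${\rm SL}(2,\mathbb Q_p)$, which is non-compact; taking these as $C$ and $\alpha(C)$ gives a legitimate instance of the theorem where your choice of $U$ is impossible. Fortunately this step is not needed: as in the paper's construction, one arranges that the left cosets of $C$ appear among the vertices of $X_A$ (e.g.\ by adding the orbit $A/C$ as an extra orbit of vertices with compact open stabilizers), and then the $t$-edge from $gC\in X_{gA}$ to $gtC\in X_{gtA}$ is well defined without ever producing a single compact $U$ absorbing both $C$ and $\alpha(C)$. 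With that repair, the rest of your argument --- including the reduction of fineness to fineness of $\Gamma_A$ via the fact that each lift of a tree edge is a cut edge, so embedded circuits stay inside a single vertex space --- goes through and matches the paper.
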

In particular, the above results indicate that it produces more examples of TDLC hyperbolic groups. As an application, it follows that ${\rm SL}(2,\mathbb Z_p)\times \mathbb Z$ is a hyperbolic group and it contains a subgroup isomorphic to $\mathbb Z\times\mathbb Z$. This shows that having a subgroup isomorphic to $\mathbb Z\times \mathbb Z$ is not an obstruction to hyperbolicity for TDLC groups (Remark \ref{remark-Z}). One is referred to Theorem \ref{theorem-rel-hyp-combination-general-case} for the statement of the combination theorem for a graph of relatively hyperbolic TDLC groups. Baumgarter-M\"{o}ller-Willis \cite[Corollary 29]{baumgartner-moller-willis-flatrank-one} proved that a compactly generated group which splits as a finite graph of compact groups is hyperbolic. We generalize their result and, as corollaries of the previous theorems, we obtain combination theorems for hyperbolic TDLC groups, see Corollaries \ref{corollary-combination-amalgam-hyp-tdlc} and \ref{corollary-combination-hnn-hyperbolic-tdlc}.

\begin{remark}
    To prove Theorems \ref{rel-combination-theorem-amalgam} and Theorem \ref{theorem-rel-combination-theorem-hnn}, we construct a graph using the Cayley-Abels graphs of $A$ and $B$ with respect to $\mathcal H_A$ and $\mathcal H_B$, respectively. Then, we prove that this graph is a Cayley-Abels graph of $G$ with respect to $\mathcal H_A\cup\mathcal H_B$. However, one can also use Theorems 3.1 and 3.2 from \cite{bigdely-pedroza} to construct a Cayley-Abels graph of $G$ with respect to $\mathcal H_A\cup\mathcal H_B$. It is also worth mentioning that, using the combination of graphs construction from \cite{bigdely-pedroza}, one can prove a combination theorem for a finite graph of relatively hyperbolic TDLC groups with parabolic edge groups. Since our main motive for this article is to understand the Bowditch boundaries and their homeomorphism types of amalgamated free products and HNN extensions of relatively hyperbolic TDLC groups, we did not include it in this paper.
\end{remark}

In \cite{gromov-hypgps}, Gromov introduced the notion of the boundary of a hyperbolic group, now known as the Gromov boundary. Since any two Cayley-Abels graphs of a TDLC group are quasiisometric, this leads to the notion of the Gromov boundary of a hyperbolic TDLC group. In \cite{bowditch-relhyp}, Bowditch introduced the notion of the boundary of a relatively hyperbolic group, now known as the Bowditch boundary. One of our main aims in this paper is to study the Bowditch boundaries of amalgamated free product and HNN extension of relatively hyperbolic TDLC groups. For that, we need a model space for Cayley-Abels graphs of the amalgamated free product and the HNN extension so that the Bowditch boundary of the amalgam and HNN extension gives a compactification of this model space, see Subsection \ref{section-bowditch-boundaries-rel-hyp-amalgam-hnn}. Since the relative version of the Cayley-Abels graph introduced by Arora-Pedroza \cite{arora-pedroja} is not necessarily locally finite, we cannot use it to construct the model space for the amalgam and HNN extension. Thus, we adapt different definitions of relatively hyperbolic TDLC groups. Motivated by the work of Farb \cite{farb-relhyp}, \cite{bowditch-relhyp}, and \cite{groves-manning}, we give two definitions (see Definition \ref{definition-tdrh-II} and Definition \ref{definition-tdrh-III}) of compactly generated relatively hyperbolic TDLC groups and prove their well-definedness in Propositions \ref{well-definedness-augmented-cayley-graph} and \ref{propsition-welldefined-ferb}. In Theorem \ref{theorem-rh-equivalence}, we prove the equivalence of these three definitions for compactly generated TDLC groups. As a consequence, we prove the following, which is well-known in the discrete setting \cite{osin-book}. This can be thought of as an independent interest.
\begin{theorem}\label{Theorem-rel-hyp-intersection-is-compact} Let $G$ be a compactly generated TDLC group, and $\mathcal H=\{H_1,\dots,H_n\}$ be a finite collection of compactly generated open subgroups of $G$. If $G$ is hyperbolic relative to $\mathcal H$, then we have the following:
 \begin{enumerate}
     \item[{$(1)$}] For all $i\neq j$, $H_i\cap H_j$ 
     is compact. In particular, $\bigcap_{i=1}^nH_i$ 
     is compact. 
     \item[{$(2)$}] For $g\notin H_i$, $gH_ig^{-1}\cap H_j$ is compact for all $i$ and $j$.

     \item[{$(3)$}] If $H_i\in \mathcal H$ is hyperbolic for all $i=1,\dots,n$, then $G$ is hyperbolic.
 \end{enumerate} 
\end{theorem}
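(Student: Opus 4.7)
The plan is to work with the cusped-space formulation (Definition \ref{definition-tdrh-II}) and the coned-off Cayley-Abels graph formulation (Definition \ref{definition-tdrh-III}) of relative hyperbolicity rather than the Arora-Pedroza one, since by Theorem \ref{theorem-rh-equivalence} the three notions coincide in the compactly generated setting, and the cusp/cone geometries make almost-malnormality statements transparent.

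For Part (1), I would argue by contradiction. Suppose $H_i\cap H_j$ is non-compact for some $i\neq j$. Since $G$ is a compactly generated TDLC group and $H_i\cap H_j$ is closed, non-compactness yields a sequence $\{h_m\}\subseteq H_i\cap H_j$ with $d_\Gamma(e, h_m)\to\infty$ in a Cayley-Abels graph $\Gamma$ of $G$. Let $X=X(G,\mathcal H)$ denote the cusped space: a locally finite graph obtained from $\Gamma$ by attaching a combinatorial horoball at each coset of each $H_k$, which is $\delta$-hyperbolic by hypothesis. Each $h_m$ lies at depth zero of the horoballs $\mathcal B_i$ and $\mathcal B_j$ over $H_i$ and $H_j$ respectively. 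Inside a combinatorial horoball the induced metric is logarithmic, so $e$ and $h_m$ can be joined by a path inside $\mathcal B_i$ of length $2\log_2 d_\Gamma(e,h_m) + O(1)$, and analogously inside $\mathcal B_j$. These two paths leave $e$ through edges into distinct horoballs, and their midpoints sit at depth roughly $\tfrac12 \log_2 d_\Gamma(e,h_m)$ in two different horoballs that meet only at depth zero; any path connecting these midpoints must descend out of one horoball and ascend into the other, and so has length at least $\log_2 d_\Gamma(e,h_m) - O(1)$. For $m$ large this contradicts the $\delta$-thin triangle condition applied to a suitable triangle in $X$, proving the claim; the finite-intersection statement then follows by induction. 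Part (2) is a direct variant: if $g\notin H_i$ and $gH_ig^{-1}\cap H_j$ were non-compact, a sequence $\{h_m\}$ in the intersection has $h_m\in H_j$ on the boundary of $\mathcal B_j$ and $h_m g\in gH_i$ on the boundary of the horoball $\mathcal B_{gH_i}$. Since $g\notin H_i$, the cosets $H_j$ and $gH_i$ are distinct, and the same horoball-divergence argument applied to $(e, h_m g)$ yields the contradiction.

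For Part (3), I would switch to the Farb-style formulation. The coned-off Cayley-Abels graph $\widehat\Gamma$ of $G$ with respect to $\mathcal H$ is $\delta$-hyperbolic. Replace each cone vertex $v(gH_k)$ by an equivariant copy of a Cayley-Abels graph of $H_k$ (which exists and is hyperbolic by hypothesis), glued along the vertices of $gH_k$. After passing to a common refinement of the compact open subgroups used in the various Cayley-Abels graphs, the resulting graph $\Gamma'$ is a Cayley-Abels graph of $G$. Hyperbolicity of $\Gamma'$ then follows from a Bestvina-Feighn style tree-of-hyperbolic-spaces argument: the Bounded Coset Penetration property encoded in Definition \ref{definition-tdrh-III} controls the horizontal penetration of triangles in $\Gamma'$ through peripheral pieces, while hyperbolicity of the peripheral Cayley-Abels graphs handles the contribution from within each piece.

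The main obstacle, and the place requiring the most care, is the horoball-divergence lemma underlying Parts (1) and (2). One must verify that the combinatorial-horoball inequalities of \cite{groves-manning} transfer to horoballs built over cosets inside a Cayley-Abels graph rather than a classical Cayley graph, and that the logarithmic distance estimates survive the coset quotient. Once this geometric lemma is in hand, Parts (1) and (2) become immediate, and Part (3) reduces to a routine combination argument.
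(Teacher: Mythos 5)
Your route is genuinely different from the paper's. For parts (1) and (2) the paper works entirely in the coned-off Cayley--Abels graph $\hat X$ and uses only its \emph{fineness}: choosing a compact open $U\subset\bigcap_k H_k$, if $H_i\cap H_j$ contained infinitely many cosets of $U$ then $Y_i$ and $Y_j$ would share infinitely many vertices, so the two cone points $v_i,v_j$ would be joined by infinitely many embedded loops of length $4$ through one fixed edge, contradicting Definition \ref{definition-fine-graph}; part (2) is the same argument for $gY_i$ and $Y_j$. This is a short combinatorial argument with no metric estimates. For part (3) the paper observes that $X$ is metrically hyperbolic relative to the family $\{Y_{i,t}\}$ of uniformly hyperbolic subgraphs and quotes Hamenst\"adt's theorem \cite{hamnestadtrelative}. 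Your cusped-space divergence argument for (1)--(2) is the standard geometric alternative and can be made to work, but it is considerably heavier and, as written, incomplete.

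The concrete gaps are these. (a) You apply the thin-triangle condition to the two paths $\gamma_i\subset\mathcal B_i$ and $\gamma_j\subset\mathcal B_j$ joining $e$ to $h_m$, but thinness applies only to geodesics (or uniform quasigeodesics) \emph{of the ambient cusped space} $X^h$, whereas you have produced geodesics of the individual horoballs; a priori $X^h$ could contain much shorter paths between these depth-zero points running through other parts of the space. The missing lemma is that horoball geodesics between depth-zero points are uniform quasigeodesics of $X^h$ (equivalently, uniform quasiconvexity of horoballs, in the spirit of \cite{groves-manning}); that is where the real work lies, not in transferring the horoball inequalities to Cayley--Abels graphs --- the horoball is built over an abstract graph, so the intrinsic estimates hold verbatim. (b) The midpoint of a horoball geodesic between depth-zero points at distance $d$ sits at depth about $\log_2 d$, not $\tfrac12\log_2 d$; harmless for the conclusion, but it should be corrected. (c) For part (3), a Bestvina--Feighn tree-of-hyperbolic-spaces argument does not apply: the coned-off Cayley--Abels graph is not a tree and the peripheral pieces are not arranged along one, so there is no tree of spaces to which \cite{BF} could be applied. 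What is needed is precisely a combination result of the Hamenst\"adt/Osin type (a space hyperbolic relative to uniformly hyperbolic pieces is hyperbolic), which is exactly what the paper invokes; with that substitution your part (3) coincides with the paper's.
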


In \cite{martin-swiat}, Martin-\`{S}wi\polhk{a}tkowski proved that the topology of the Gromov boundary of a free product of hyperbolic groups is uniquely determined by the topology of the Gromov boundary of each free factor. Recently, the third-named author extended this result for relatively hyperbolic groups \cite{tomar-homeo-relhyp}. For Morse boundary, the result of the same flavor is proved by Zbiden \cite{zbiden-morse}. In the realm of relatively hyperbolic TDLC groups, we prove an analogous result. It is worth mentioning that, to prove this result, we are not adapting the technique used in \cite{martin-swiat}. Rather, we use the dense amalgam technique developed by \`{S}wi\polhk{a}tkowski \cite{swiatkowski-dense-amalgam}. Now, we state our main result for relatively hyperbolic TDLC groups. For the definition of rough ends of a TDLC group, one is referred to Definition \ref{definition-rough-ends}.

\begin{theorem}\label{theorem-rel-hyp-homeo-type-amal-case} Suppose $G=A\ast_C B$ and $G'=A'\ast_{C'} B'$ are amalgamated free products of compactly generated TDLC groups such that $C$ and $C'$ are compact, and the rough ends of $G$ and $G'$ are infinite. Suppose $A,B$ are hyperbolic relative to $\mathcal H_A, \mathcal H_B$, respectively, and $A',B'$ are hyperbolic relative to $\mathcal H_A',\mathcal H_B'$, respectively. Then, if the Bowditch boundaries of $ A$ and $B$ with respect to $\mathcal H_A$ and $\mathcal H_B$ are homeomorphic to the Bowditch boundaries of $A'$ and $B'$ with respect to $\mathcal H_A'$ and $\mathcal H_B'$, respectively then the Bowditch boundary of $G$ with respect to $\mathcal H_A\cup\mathcal H_B$ is homeomorphic to the Bowditch boundary of $G'$ with respect to $\mathcal H_A'\cup\mathcal H_B'$  (by Theorem \ref{rel-combination-theorem-amalgam}, $G$ and $G'$ are hyperbolic relative to $\mathcal H_A\cup\mathcal H_B$ and $\mathcal H_A'\cup \mathcal H_B'$, respectively).
\end{theorem}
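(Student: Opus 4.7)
The plan is to realise the Bowditch boundary of $G$ as a topological model that depends only on the homeomorphism types of the boundaries of $A$ and $B$, via \`{S}wi\polhk{a}tkowski's dense amalgam construction $\widetilde{\sqcup}$. Once we exhibit a canonical homeomorphism
\[
\partial(G,\mathcal{H}_A\cup\mathcal{H}_B)\ \cong\ \widetilde{\sqcup}\bigl(\partial(A,\mathcal{H}_A),\ \partial(B,\mathcal{H}_B)\bigr),
\]
together with the analogous statement for $G'$, the theorem follows because the dense amalgam depends only on the homeomorphism types of its inputs, and the hypothesis gives $\partial A\cong \partial A'$, $\partial B\cong \partial B'$.

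First, using Theorem \ref{rel-combination-theorem-amalgam} and the model space constructed in Subsection \ref{section-bowditch-boundaries-rel-hyp-amalgam-hnn}, I would build a tree of spaces $X_G$ for $G$: the underlying tree is the Bass--Serre tree $T$ of the splitting $A\ast_C B$, each vertex carries a copy of the cusped/augmented model space for the corresponding conjugate of $A$ or $B$, and the edge spaces are copies of $C$. Because $C$ is compact, the edge spaces are bounded, hence $X_G$ is hyperbolic and its compactification realises $\partial(G,\mathcal{H}_A\cup\mathcal{H}_B)$. Points of $\partial G$ then split into two disjoint types: limit points carried by a single vertex space, which form a copy of $\partial(A,\mathcal{H}_A)$ or $\partial(B,\mathcal{H}_B)$ indexed by that vertex; and ends of $T$, which form a compact totally disconnected subspace.

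Next, I would identify this decomposition with the axiomatic characterisation of the dense amalgam. \`{S}wi\polhk{a}tkowski characterises $\widetilde{\sqcup}(X_1,X_2)$ by properties of a family $\{Y_i\}$ of closed subspaces of a compact metrisable space $Z$: each $Y_i$ is homeomorphic to some $X_j$; the family is null, i.e.\ for any metric compatible with the topology, only finitely many $Y_i$ have diameter exceeding any $\varepsilon>0$; the union $\bigcup Y_i$ is dense in $Z$ with dense complement; and the intersection pattern of the $Y_i$ is combinatorially controlled. In our setting the $Y_i$ are the $G$-translates of $\partial A$ and $\partial B$ indexed by the vertices of $T$. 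The hypothesis that the rough ends of $G$ are infinite is exactly what forces $T$ to have infinitely many ends, so that $\partial G\setminus\bigcup Y_i$ (the set of ends of $T$) is a Cantor set that is dense in $\partial G$. Compactness of $C$ ensures that distinct $Y_i$ are either disjoint or meet only at parabolic points identified across adjacent vertex spaces; and the null condition follows from the standard estimate that, in a visual metric on $\partial G$, the diameter of the image of a vertex space at combinatorial distance $n$ from a basepoint of $T$ decays exponentially in $n$.

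The main obstacle, in my view, is verifying the null and intersection axioms in this generality. Parabolic points pose the subtlest issue: a parabolic subgroup $H\in\mathcal{H}_A$ may have translates meeting several vertex spaces, and one must check that its limit set is attached consistently across the tree of spaces. This requires a careful analysis of how parabolic subgroups of $A$ and $B$ interact along Bass--Serre translates, invoking Theorem \ref{Theorem-rel-hyp-intersection-is-compact}(2) to conclude that conjugates of parabolics by nontrivial tree elements meet other parabolics compactly, so each parabolic fixed point is carried by at most one vertex copy in $\partial G$. A secondary technical point is translating the hypothesis ``rough ends of $G$ are infinite'' into ``the Bass--Serre tree has infinitely many ends'', which should follow from compactness of $C$ and the description of rough ends in terms of the tree of spaces, but requires some bookkeeping in the TDLC framework.
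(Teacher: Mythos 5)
Your proposal follows essentially the same route as the paper: build the tree of augmented vertex spaces over the Bass--Serre tree, exhibit the family of vertex-stabilizer boundaries inside $\partial_{rel}G$, verify the axioms of the dense amalgam characterization (Theorem \ref{dense-amalgam-char}), and conclude by the homeomorphism-invariance of $\widetilde{\sqcup}$, exactly as in Proposition \ref{main-prop}. The one point the paper treats that you omit is the degenerate case analysis where $\partial_{rel}A$ and/or $\partial_{rel}B$ is empty (i.e., a vertex group is compact), in which the boundary is instead a Cantor set or a dense amalgam of a single space; otherwise your outline, including the role of compactness of $C$ in keeping the limit sets of distinct vertex spaces disjoint, matches the paper's argument.
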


\begin{remark}\label{compact-hypothesis-reason}
    If $(\mathcal G,\mathcal Z)$ is a finite graph of group such that the edge group is infinite and the vertex groups are relatively hyperbolic, then the topology of the boundary of the fundamental group of $(\mathcal G,\mathcal Z)$ is not uniquely determined by the topology of the boundaries of the vertex groups in general, see \cite[Example 7.2]{tomar}. The same thing is also not true for graphs of hyperbolic groups in general, for example, the free group of rank $4$ and the fundamental group of a closed orientable surface of genus $2$ split as amalgamated free products with isomorphic vertex groups. However, their Gromov boundaries are not homeomorphic. Thus, when $C$ and $C'$ are non-compact, the conclusion of Theorem \ref{theorem-rel-hyp-homeo-type-amal-case} is false.
\end{remark} 

For an HNN extension, we prove the following:

\begin{theorem}\label{theorem-rel-hyp-homeo-type-hnn}
Suppose $G=A\ast_C$ and $G'=A'\ast_{C'}$ are HNN extensions of compactly generated TDLC groups such that $C$ and $C'$ are compact, and the rough ends of $G$ and $G'$ are infinite. Suppose $A$ and $A'$ are hyperbolic relative to $\mathcal H_A$ and $\mathcal H_B$, respectively. Then, if the Bowditch boundaries of $A$ and $A'$ with respect to $\mathcal H_A$ and $\mathcal H_A'$, respectively, are homeomorphic, then the Bowditch boundaries of $G$ and $G'$ with respect to $\mathcal H_A$ and $\mathcal H_A'$, respectively, are homeomorphic (by Theorem \ref{theorem-rel-combination-theorem-hnn}, $G$ and $G'$ are hyperbolic relative to $\mathcal H_A$ and $\mathcal H_A'$, respectively).
\end{theorem}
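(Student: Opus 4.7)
The plan is to follow the strategy of Theorem~\ref{theorem-rel-hyp-homeo-type-amal-case}, suitably adapted, with the goal of realizing the Bowditch boundary of the HNN extension $G = A\ast_C$ as a \'{S}wi\polhk{a}tkowski dense amalgam of countably many copies of the Bowditch boundary of $(A,\mathcal H_A)$, and then observing that the homeomorphism type of this dense amalgam depends only on the homeomorphism type of the Bowditch boundary of $(A,\mathcal H_A)$.

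First I would construct a model space for the Bowditch boundary of $(G,\mathcal H_A)$ as a tree of augmented spaces. By Theorem~\ref{theorem-rel-combination-theorem-hnn}, $G$ is hyperbolic relative to $\mathcal H_A$. Using the equivalent definitions of relative hyperbolicity in Definition~\ref{definition-tdrh-II} or Definition~\ref{definition-tdrh-III}, I would take the Bass-Serre tree $T$ of the HNN decomposition and, at each vertex $v \in T$, attach a translate of the augmented (Farb / Groves-Manning) space of $A$, glueing along the cosets of $C$ corresponding to the edges of $T$ incident to $v$. Since $C$ is compact, each gluing occurs along a bounded set, so the resulting tree of spaces is quasi-isometric to the augmented space of $G$ with respect to $\mathcal H_A$; hence its Gromov boundary realizes the Bowditch boundary of $(G,\mathcal H_A)$.

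Next I would invoke \'{S}wi\polhk{a}tkowski's dense amalgam theorem, exactly as in the proof of Theorem~\ref{theorem-rel-hyp-homeo-type-amal-case}. The hypothesis that $G$ has infinitely many rough ends, combined with compactness of $C$, forces the Bass-Serre tree $T$ to be infinite and sufficiently branching at every vertex, so that the combinatorial gluing pattern satisfies the input conditions for the dense amalgam construction. Points of the Bowditch boundary of $(G,\mathcal H_A)$ then fall into two classes: those lying in some translated boundary copy of $(A,\mathcal H_A)$ indexed by a vertex of $T$, and those coming from ends of $T$; the latter are identified densely with the former through the tree structure. Applying the same recipe to $G' = A'\ast_{C'}$ yields an analogous presentation of the Bowditch boundary of $(G',\mathcal H_A')$ as a dense amalgam of copies of the Bowditch boundary of $(A',\mathcal H_A')$. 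The uniqueness clause of the dense amalgam construction then implies that any homeomorphism between the Bowditch boundaries of $(A,\mathcal H_A)$ and $(A',\mathcal H_A')$ extends to a homeomorphism between the two dense amalgams, giving the required homeomorphism between the Bowditch boundaries of $(G,\mathcal H_A)$ and $(G',\mathcal H_A')$.

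The main obstacle will be verifying that the combinatorial input to the dense amalgam theorem is genuinely generic in the HNN setting \text{---} specifically, that the homeomorphism type of the resulting dense amalgam does not depend on the particular branching data of the Bass-Serre tree of $G$ (which may differ considerably from that of $G'$) but only on the homeomorphism type of the single building block $\partial(A,\mathcal H_A)$. A secondary technical point is that in an HNN extension $C$ embeds into $A$ via two potentially distinct inclusions, so there are two edge orbits meeting each tree vertex; since $C$ is compact, however, both inclusions contribute only bounded identifications at infinity, and the asymmetry disappears at the level of the boundary. Once these points are handled, the remainder of the argument is a straightforward specialization of the amalgamated case, and in fact is slightly simpler because there is only a single vertex-group homeomorphism type to match rather than two.
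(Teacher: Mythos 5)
Your proposal follows essentially the same route as the paper: build a tree of augmented spaces over the Bass--Serre tree (this is Subsection \ref{bowditch-boundary-construction-hnn}), show that the family of translated copies of $\partial_{rel}A$ inside $\delta(X^h)$ satisfies conditions (1)--(5) of Theorem \ref{dense-amalgam-char}, conclude that $\partial_{rel}G$ is homeomorphic to $\widetilde{\sqcup}(\partial_{rel}A)$, and then use the fact that the dense amalgam depends only on the homeomorphism type of its building block (\cite[Proposition 3.1]{swiatkowski-dense-amalgam}). Note that the ``main obstacle'' you flag --- independence of the outcome from the branching data of $T$ --- is exactly what the characterization theorem already disposes of: \emph{any} compact metric space that is $(\partial_{rel}A)$-regular is homeomorphic to $\widetilde{\sqcup}(\partial_{rel}A)$, so once conditions (1)--(5) are verified no further genericity argument is needed; the real work is the verification itself (nullness, density, and $\mathcal Y$-saturated separation), which your sketch asserts but does not carry out. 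The one case your argument genuinely does not cover is $\partial_{rel}A=\emptyset$, i.e. $A$ compact, which is consistent with all the hypotheses: Theorem \ref{dense-amalgam-char} requires non-empty building blocks, so the dense amalgam machinery is unavailable there. The paper handles this separately by observing that the Cayley--Abels graph of $G$ is then quasiisometric to the uniformly locally finite Bass--Serre tree, so that $\partial_{rel}G$ and $\partial_{rel}G'$ are both Cantor sets. Modulo adding that degenerate case and actually checking conditions (1)--(5), your argument is correct and matches the paper's.
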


Theorem \ref{theorem-rel-hyp-homeo-type-amal-case} and Theorem \ref{theorem-rel-hyp-homeo-type-hnn} generalize \cite[Theorem 1.2]{tomar-homeo-relhyp} even in the discrete setting. In particular, we now do not need hypothesis (2) in Theorem 1.2 of \cite{tomar-homeo-relhyp}. For the statement of the theorem for general graphs of relatively hyperbolic groups, see Theorem \ref{theorem-rel-hyp-homeo-type-general}. As a consequence, we obtain similar results for hyperbolic TDLC groups, see Corollaries \ref{corollary-hyp-homeo-type-amalgam} and \ref{corollary-hyp-homeo-type-hnn}. 

It is well known that the Gromov boundary of a one ended discrete hyperbolic group is connected. In \cite[Proposition 10.1]{bowditch-relhyp}, Bowditch proved that the Bowditch boundary of a one ended discrete relatively hyperbolic group is a connected topological space. By the definition of rough ends of a compactly generated TDLC group, it follows that the Gromov boundary of a hyperbolic TDLC group having one rough end is connected. We prove an analogous result in the framework of relatively hyperbolic TDLC groups. This gives a new proof of \cite[Proposition 10.1]{bowditch-relhyp} even in the discrete setup.

\begin{theorem}\label{theorem-connectedness-bowditch-boundary} Let $G$ be a compactly generated TDLC group which is hyperbolic relative to a finite collection of compactly generated open subgroups $\mathcal H$. Suppose $G$ has one rough end, then the Bowditch boundary of $G$ is connected.
\end{theorem}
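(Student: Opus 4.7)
The plan is to use the Groves--Manning style cusped-space model: let $X$ denote the augmented Cayley--Abels graph of $G$ with respect to $\mathcal H$ (Definition \ref{definition-tdrh-III}) and let $K\subset X$ be the underlying (depth-zero) Cayley--Abels graph of $G$. By Theorem \ref{theorem-rh-equivalence}, $X$ is a locally finite, proper, Gromov hyperbolic graph whose Gromov boundary $\partial X$ is homeomorphic to the Bowditch boundary of $(G,\mathcal H)$. The rough ends of $G$ coincide with the ends of $K$ (Definition \ref{definition-rough-ends}), and by hypothesis $K$ has exactly one end. A compact member of $\mathcal H$ would add only an isolated parabolic point to $\partial X$ (the attached horoball being essentially a ray) and can be discarded without affecting the question, so I assume every $H\in\mathcal H$ is non-compact; in particular each peripheral coset $gH$ is an infinite set of vertices of $K$.

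I argue by contradiction: suppose $\partial X = U\sqcup V$ with $U,V$ nonempty, closed and disjoint. The aim is to exhibit a finite separator of $K$ leaving at least two unbounded components, contradicting the one rough end hypothesis. Fix $x_0\in X$. Since $\partial X$ is compact Hausdorff and $U\cap V=\emptyset$,
\[
D_0:=\sup\{(u\mid v)_{x_0} : u\in U,\, v\in V\}<\infty,
\]
for otherwise a sequence $(u_n,v_n)\in U\times V$ with Gromov products tending to infinity would accumulate to a common limit in $U\cap V$. Choose $R>D_0+100\delta$, where $\delta$ is a hyperbolicity constant of $X$, and set $B=B_X(x_0,R)$. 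For any geodesic rays $r_u,r_v$ issuing from $x_0$ towards $u\in U$ and $v\in V$, their tails lie in $X\setminus B$; any path joining two such tails inside $X\setminus B$ would, by a standard thin-triangle estimate, force $(u\mid v)_{x_0}>D_0$, which is impossible. Hence the rays to $U$ and the rays to $V$ enter distinct unbounded components $C_U,C_V$ of $X\setminus B$.

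Next I transfer the separation from $X$ to $K$. Given $p\in C_U$ lying at positive depth in the combinatorial horoball over a peripheral coset $gH$, one needs $p$ to be connected inside $X\setminus B$ to some depth-zero vertex of $gH\setminus B$: since $gH$ is infinite and $B$ is finite, only finitely many vertices of $gH$ lie inside $B$; moving from $p$ first deeper into the horoball to clear the portion of $B$ meeting the horoball, then horizontally at a uniform large depth using the exponential-speed horizontal edges of the Groves--Manning horoball, and finally ascending to a depth-zero vertex of $gH\setminus B$, yields such a detour entirely outside $B$. Consequently $C_U\cap K$ and $C_V\cap K$ are nonempty and unbounded in $K$, and they sit in distinct components of $K\setminus(K\cap B)$ (any $K$-path between them would be an $X$-path avoiding $B$, merging $C_U$ with $C_V$). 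Since $K\cap B$ is finite, $K$ has at least two ends — the desired contradiction.

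I expect the main technical obstacle to be the horoball-detour argument in the previous paragraph: one must verify that in the combinatorial horoball over an infinite peripheral coset, removing a ball of finite radius leaves a connected complement which remains connected to the coset at depth zero. This uses the exponential horizontal compression intrinsic to the Groves--Manning construction together with the local finiteness of $X$, and is precisely where the non-compactness assumption on the peripheral subgroups is needed; for compact $H$ the horoball over $gH$ would reduce to a ray whose deep end contributes a spurious end to $X$ that is invisible in $K$, breaking the transfer step.
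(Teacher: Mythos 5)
The central step of your argument does not hold up. Having set $D_0=\sup\{(u\mid v)_{x_0}:u\in U,\ v\in V\}<\infty$ and $R>D_0+100\delta$, you assert that ``any path joining two such tails inside $X\setminus B$ would, by a standard thin-triangle estimate, force $(u\mid v)_{x_0}>D_0$.'' Thin-triangle and stability estimates control geodesics and quasi-geodesics, not arbitrary paths: an arbitrary path in $X\setminus B$ from the tail of $r_u$ to the tail of $r_v$ gives no lower bound on $(u\mid v)_{x_0}$. In $\mathbb{H}^2$ (or any one-ended hyperbolic Cayley graph) the complement of every ball is connected, yet there are pairs of boundary points with Gromov product $0$; so ``uniformly bounded Gromov products'' does not yield ``separated by a large ball.'' What you actually need --- that the clopen decomposition $U\sqcup V$ of $\partial X^h$ forces some finite subset of $X^h$ to separate the rays toward $U$ from the rays toward $V$ --- is essentially the contrapositive of the theorem itself, and the finiteness of $D_0$ alone does not deliver it. Since the components $C_U,C_V$, the transfer to $K$, and the horoball detours all rest on this unestablished separation, the proof does not go through. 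A secondary point: discarding compact members of $\mathcal H$ is not harmless, because the corresponding parabolic points are isolated in the Bowditch boundary, so removing them can change a disconnected boundary into a connected one; the reduction alters the very space whose connectivity is in question.

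For comparison, the paper never attempts to extract a separating ball from the boundary decomposition; it argues in the opposite direction. It restricts the metric of the compactification $\overline{X^h}$ to $\overline{G}=V(X)\cup\partial_{rel}G$, defines a nearest-point projection $\pi:V(X)\to\partial_{rel}G$, and pulls back a putative partition $\partial_{rel}G=U_1\sqcup U_2$ into open sets to a partition $V(X)=D_1\sqcup D_2$ with $\mathrm{cl}(D_i)=D_i\cup U_i$. The one-rough-end hypothesis is then used positively: sequences $v_n\to\xi_1\in U_1$ and $w_n\to\xi_2\in U_2$ can be joined by paths in $X$ avoiding arbitrarily large balls; each such path must contain an edge $[a_m,b_m]$ with $a_m\in D_1$ and $b_m\in D_2$; since $d(a_m,b_m)=1$, the two endpoint sequences subconverge to a common point $\eta\in\partial_{rel}G$, which by the projection lemma lies in both $U_1$ and $U_2$ --- a contradiction. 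That route is more elementary and sidesteps the separation problem entirely; if you want to salvage your approach, the separation of $X^h\setminus B$ is the statement you would have to prove by other means.
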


Using Theorem \ref{theorem-connectedness-bowditch-boundary}, we obtain partial converses of Theorems \ref{theorem-rel-hyp-homeo-type-amal-case} and \ref{theorem-rel-hyp-homeo-type-hnn}, see Theorems \ref{theorem-converse-homeo-type-rel-amal} and \ref{converse-homeo-type-rel-hnn}. Moreover, as an application of Theorems \ref{theorem-rel-hyp-homeo-type-amal-case} and \ref{theorem-rel-hyp-homeo-type-hnn}, using the accessibility of hyperbolic TDLC groups, we obtain the following:

\begin{proposition}\label{proposition-total-disconnection-implies-splitting} Let $G$ be a hyperbolic TDLC group such that the Gromov boundary of $G$ is totally disconnected. Then, $G$ splits as a finite graph of topological groups such that the vertex groups are compact.
    \end{proposition}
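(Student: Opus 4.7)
The plan is to combine a Stallings-type splitting over compact subgroups with accessibility of hyperbolic TDLC groups, and then invoke Theorem~\ref{theorem-connectedness-bowditch-boundary} in the absolute (non-relative) case to rule out non-compact vertex groups in the resulting decomposition.

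First, I would observe that the dichotomy ``one rough end'' versus ``at least two rough ends'' interacts cleanly with total disconnectedness of the Gromov boundary $\partial G$. Suppose $G$ is non-compact. If $G$ had exactly one rough end, then Theorem~\ref{theorem-connectedness-bowditch-boundary}, applied with $\mathcal H=\emptyset$ so that the Bowditch boundary is the Gromov boundary, would force $\partial G$ to be connected; together with total disconnectedness this forces $\partial G$ to consist of at most one point. For a hyperbolic TDLC group this is impossible in the non-compact case, since the Gromov boundary of a non-elementary hyperbolic group is perfect, while the two-ended case already has two rough ends. Hence $G$ has at least two rough ends, and a Stallings-type theorem in the TDLC setting produces a nontrivial splitting $G=A\ast_C B$ or $G=A\ast_C$ with $C$ a compact open subgroup.

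I would then iterate this procedure. The vertex groups in such a splitting are hyperbolic TDLC groups (compare Corollaries~\ref{corollary-combination-amalgam-hyp-tdlc} and~\ref{corollary-combination-hnn-hyperbolic-tdlc}), and their Gromov boundaries embed as closed subsets of $\partial G$ as the limit sets of the vertex stabilizers acting on a Cayley-Abels graph of $G$ built from those of the factors. Consequently, they inherit total disconnectedness from $\partial G$. By accessibility of hyperbolic TDLC groups, iterated splittings over compact open subgroups must terminate after finitely many steps, yielding $G$ as the fundamental group of a finite graph of topological groups with compact edge groups and vertex groups $G_v$ that admit no further nontrivial splitting over a compact open subgroup. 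Applying the dichotomy of the previous paragraph to each such $G_v$ --- which is a hyperbolic TDLC group with totally disconnected Gromov boundary and at most one rough end --- then forces $G_v$ to be compact, completing the proof.

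The main obstacle is the accessibility step: one needs a finiteness statement for iterated splittings of hyperbolic TDLC groups over compact open subgroups, playing the role of Dunwoody's accessibility in the discrete setting. The remaining ingredients, namely hyperbolicity of the vertex groups in a splitting with compact edge group and the embedding of their boundaries into $\partial G$, follow from the Cayley-Abels graph constructions and combination theorems developed earlier in the paper.
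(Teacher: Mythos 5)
Your argument is correct in substance and ends exactly where the paper's proof ends, but it takes a longer route through the middle. You build the decomposition by iterating a Stallings-type splitting (non-compact plus totally disconnected boundary forces at least two rough ends, hence a splitting over a compact open subgroup) and then invoke accessibility only to guarantee that the iteration terminates; you correctly flag this termination statement as the main obstacle. The paper sidesteps this entirely: the notion of accessibility it uses (Definition~\ref{definition-accessible-group}, together with Cornulier's result that hyperbolic TDLC groups are accessible) already packages the conclusion of your iteration --- a single action on a tree with finitely many edge orbits, compact open edge stabilizers, and vertex stabilizers that are compactly generated, open, and have \emph{at most one} rough end. So no TDLC Stallings theorem and no induction are needed; one passes directly to the final step. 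That final step is the same in both arguments: by Corollaries~\ref{corollary-combination-amalgam-hyp-tdlc} and~\ref{corollary-combination-hnn-hyperbolic-tdlc} the vertex groups are hyperbolic, their Gromov boundaries sit as connected components of $\partial G$ (Proposition~\ref{proposition-connected-component}), and a vertex group with one rough end would contribute a nondegenerate connected component, contradicting total disconnectedness; hence every vertex group has zero rough ends and is compact. Note that both your proof and the paper's rely at this point on the same implicit fact --- that a non-compact one-rough-ended hyperbolic TDLC group cannot have a one-point boundary --- which you at least make explicit via the $0/2/\text{perfect}$ trichotomy. Your approach buys a more self-contained induction at the cost of needing a TDLC Stallings theorem as an extra input; the paper's buys brevity by leaning on the stronger packaged form of accessibility.
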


The paper is organized as follows. In Section 2, we recall essential definitions, outline a useful construction of Bass-Serre trees of graphs of topological groups, and present some observations on the Cayley-Abels graph in Lemmas \ref{lemma-countable-vertices-cayley-abels-graph} and \ref{lemma-proper-embedding-subgraph}. Section 3 establishes Theorems \ref{rel-combination-theorem-amalgam} and \ref{theorem-rel-combination-theorem-hnn}, which provide a combination theorem for the amalgamated free product and the HNN extension of relatively hyperbolic TDLC groups. In Section 4, we introduce two definitions for compactly generated relatively hyperbolic TDLC groups and prove their well-definedness in Propositions \ref{well-definedness-augmented-cayley-graph} and \ref{propsition-welldefined-ferb}. We then prove the equivalence stated in Theorem \ref{theorem-rh-equivalence} and derive Theorem \ref{Theorem-rel-hyp-intersection-is-compact}. Section 5 discusses the boundaries of the amalgamated free product and HNN extension of relatively hyperbolic TDLC groups, as detailed in Theorems \ref{theorem-rel-hyp-homeo-type-amal-case} and \ref{theorem-rel-hyp-homeo-type-hnn}. Section 6 discusses the proof of Theorem \ref{theorem-connectedness-bowditch-boundary}. In Section 7, we prove partial converses of Theorems \ref{theorem-rel-hyp-homeo-type-amal-case} and \ref{theorem-rel-hyp-homeo-type-hnn}. Finally, we prove Proposition \ref{proposition-total-disconnection-implies-splitting}, which gives a splitting of a hyperbolic TDLC group having the totally disconnected Gromov boundary.
\vspace{.3cm}


\section{Preliminaries}\label{section-preliminaries}
In this section, we establish notations and recall some basic definitions and results that are relevant to us. Throughout the paper, all graphs are assumed to be connected, and each edge has length one so that graphs are naturally geodesic metric spaces. For a graph $\Gamma$, we denote by $V(\Gamma)$ and $E(\Gamma)$ the set of vertices and edges of $\Gamma$, respectively. For a metric space $(Z,d)$, a subset $R\subset Z$, and $D\geq 0$, we denote the closed $D$-neighborhood of $R$, i.e. $\{z\in Z: d(z,r)\leq D \text{ for some } r\in R\}$ by $N_D(R)$. In this paper, all topological groups are Hausdorff.

\subsection{Cayley-Abels graphs} \label{subsection-cayley-abels-graphs} Given a finitely generated group, one can construct its Cayley graph and thus treat the group as a geometric object. For TDLC groups, an analog construction has first been taken into account by Abels \cite{abels}. These are known as {\em Cayley-Abels graphs} named after him. A less technical approach to Cayley-Abels graphs is provided in \cite{kron-moller} with the name {\em rough Cayley graph}.

\begin{definition}\label{definition-cayley-abels-graph}
A locally finite connected graph $X$ is said to be a {\em Cayley-Abels graph} of a TDLC group $G$ if $G$ acts transitively on $V(X)$ and stabilizers of vertices are compact open subgroups of $G$.
\end{definition}

 A topological group is said to be {\em compactly generated} if it is algebraically generated by a compact subset. For a TDLC group $G$, a Cayley-Abels graph exists if and only if $G$ is compactly generated \cite[Theorem 2.2]{kron-moller}.

 \vspace{.2cm}

 {\bf Existence of a Cayley-Abels graph:} Let $G$ be a compactly generated TDLC group, and $U$ be a compact open subgroup of $G$. If $K$ is a compact generating set of $G$, there exists a finite symmetric set $S$ containing the identity element of $G$ such that $K\subset SU$. Define a graph $X(K,U,S)$ whose vertex set is the set of left cosets of $U$ in $G$, and the edge set is $\{\{gU,gsU\}:g\in G \text{ and } s\in S\}$. Then, it is easy to check that $X$ is a Cayley-Abels graph for $G$ (see  \cite{kron-moller}). We write $X$ in place of $X(K,U,S)$ when $K,U,S$ are clear from the context.

 Now, we note the following lemma, which is important to us for later purposes.

\begin{lemma}\label{lemma-countable-vertices-cayley-abels-graph}
Suppose $G$ is a TDLC group. Then the set of vertices of any Cayley-Abels graph of $G$ is countable.
\end{lemma}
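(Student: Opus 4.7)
The plan is to exploit the two built-in properties of a Cayley-Abels graph: local finiteness and connectedness. Any locally finite connected graph has countably many vertices, and this fact is stronger than needed, so I would deduce the lemma from it directly.

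More concretely, fix any basepoint $v_0\in V(X)$ and let $B_n(v_0)$ denote the closed combinatorial ball of radius $n$ about $v_0$ in $X$. I would argue by induction on $n$ that each $B_n(v_0)$ is a finite set: the base case $B_0(v_0)=\{v_0\}$ is trivial, and in the induction step, $B_{n+1}(v_0)$ is contained in $B_n(v_0)$ together with the union of the link sets of the finitely many vertices in $B_n(v_0)$, each of which is finite by local finiteness. Since $X$ is connected, every vertex has finite graph-distance from $v_0$, hence $V(X)=\bigcup_{n\geq 0}B_n(v_0)$. This exhibits $V(X)$ as a countable union of finite sets, so $V(X)$ is countable.

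An alternative route, worth mentioning in the proof, uses the group-theoretic structure: by \cite[Theorem 2.2]{kron-moller} the existence of a Cayley-Abels graph forces $G$ to be compactly generated, hence $\sigma$-compact. One may identify $V(X)$ with $G/U_{v_0}$ where $U_{v_0}$ is the compact open stabilizer of $v_0$; since $U_{v_0}$ is open, the quotient $G/U_{v_0}$ carries the discrete topology, and the image of any compact set under the quotient map is compact and discrete, hence finite. Writing $G$ as a countable union of compacta yields $V(X)$ as a countable union of finite sets. Neither approach has any substantial obstacle; the combinatorial argument is the cleanest and is the one I would record.
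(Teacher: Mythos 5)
Your proposal is correct, and your primary argument is genuinely different from (and more elementary than) the one in the paper. The paper's proof is essentially your \emph{alternative} route: it notes that $G$ is compactly generated, hence $\sigma$-compact, shows that $G/U$ is discrete and $\sigma$-compact (hence countable) for any open subgroup $U$, and then constructs an explicit bijection $V(X)\to G/G_v$ using transitivity of the action. Your main argument instead uses only that a Cayley-Abels graph is, by Definition \ref{definition-cayley-abels-graph}, locally finite and connected, so that $V(X)=\bigcup_{n\geq 0}B_n(v_0)$ is a countable union of finite balls. This is a clean purely combinatorial proof that bypasses the topology of $G$ entirely and applies to any locally finite connected graph; what it does not record, and what the paper's argument makes explicit along the way, is the countability of the coset space $G/U$ for an arbitrary open subgroup $U$ of a compactly generated TDLC group --- a fact of independent use (for instance, it is invoked implicitly when indexing the families $\mathcal Y_A$, $\mathcal Y_B$ in Section 5, where the relevant vertex sets are coset spaces rather than vertices of a locally finite graph). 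Either proof is complete and acceptable for the lemma as stated.
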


\begin{proof}
Let $X$ be a Cayley-Abels graph of $G$. Then $G$ is compactly generated and thus $\sigma$-compact, i.e. it is a countable union of compact subsets. Consider any open subgroup $U$ of $G$. The quotient map $\pi: G \to G/U$ is open, making each coset in $G/U$ open; hence,  $G/U$ is discrete. Since $\pi$ is continuous, surjective and $G$ is $\sigma$-compact, $G/U$ is $\sigma$-compact. As $G/U$ is both discrete and $\sigma$-compact, it is countable. This shows that, for any open subgroup $U$ of $G$, the coset space $G/U$ is countable. 

For each $v\in V(X)$, the $G$-stabilizer $G_v$ of $v$ is a compact open subgroup of $G$. By the discussion in the previous paragraph, the coset space $G/G_v$ is countable. Since the action of $G$ on $V(X)$ is transitive, for any $w \in V(X)$, there exists $g_w \in G$ such that $g_w \cdot v = w$. Suppose there exists $g_w' \in G$ such that $g_w'\cdot v = w$. Then $g_w^{-1} g_w'\cdot v=v$, implying $g_w^{-1} g_w'\in G_v$, or equivalently, $g_w G_v = g_w' G_v$. Therefore, we define a map $$\varphi: V(X) \to G/G_v\;\;{\text by}\;\;\varphi(w) = g_w G_v.$$ This map is injective: if $\varphi(w_1)=\varphi(w_2)$, then $g_{w_1}G_v=g_{w_2}G_v$, i.e.
$g_{w_2}^{-1} g_{w_1} \cdot v = v$, which implies $w_1 = w_2$. Moreover, $\varphi$ is surjective: for $g_w G_v \in G/G_v$, take $w := g_w \cdot v$, so $\varphi(w) = g_w G_v$. Thus, $\varphi$ is bijective. Since $G/G_v$ is countable, it follows that $V(X)$ is countable.
\end{proof}

Suppose $H<G$ are compactly generated TDLC groups such that $H$ is open. Let $K_H$ and $K_G$ be compact generating sets of $H$ and $G$, respectively. Let $U$ be a compact open subgroup of $G$. Then, $U_H=U\cap H$ is compact and open in $H$. Let $S_G$ and $S_H$ be finite symmetric subsets of $H$ and $G$ respectively such that $K_H\subset S_HU_H$ and $K_G\subset S_GU.$  Let $X_H=X_H(K_H,U_H,S_H)$ and $X_G=X_G(K_G,U,S_G)$ be Cayley-Abels graphs of $H$ and $G$, respectively. We denote by $d_{X_H}$ and $d_{X_G}$ the metric on $X_H$ and $X_G$, respectively. We now assume that $S_H\subset S_G$. Note that distinct cosets of $U_H$ in $H$ give distinct cosets of $U$ in $H$. Thus, we have an embedding $i:X_H\to X_G$ that sends the vertex $hU_H$ to the vertex $hU$. Thus, we can treat $X_H$ as a subgraph of $X_G$. We are now ready to prove the following:

\begin{lemma}\label{lemma-proper-embedding-subgraph}
    The map $i:X_H\to X_G$ is a proper embedding, i.e. there exists a function $\eta:[0,\infty)\to [0,\infty)$ with $\eta(n)\to \infty$ as $n\to \infty$ such that for all $h_1U_H,h_2U_H\in V(X_H)$, if $d_{X_G}(h_1U,h_2U)\leq n$ then $d_{X_H}(h_1U_H,h_2U_H)\leq \eta(n)$.
\end{lemma}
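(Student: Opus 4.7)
The plan is to exploit the $H$-equivariance of $i$ together with the local finiteness of $X_G$. Essentially, I would define $\eta(n)$ to be (a minor modification of) the diameter in $X_H$ of the set of cosets $hU_H$ with $h\in H$ such that $d_{X_G}(U,hU)\leq n$, and then verify that this produces a finite, unbounded function doing the job.

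First I would observe that $i$ is $H$-equivariant: the left action of any $h'\in H$ sends $hU_H \mapsto h'hU_H$ in $X_H$ and $hU \mapsto h'hU$ in $X_G$, each by graph automorphisms. In particular, both $d_{X_H}$ and $d_{X_G}$ are invariant under the corresponding left $H$-actions. Hence it suffices to control $d_{X_H}(U_H,hU_H)$ in terms of $d_{X_G}(U,hU)$ for $h\in H$.

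Next, for each $n\geq 0$ I would consider the ball $B_n=\{v\in V(X_G):d_{X_G}(U,v)\leq n\}$, which is finite because $X_G$ is locally finite. The sub-collection of $B_n$ consisting of cosets of the form $hU$ with $h\in H$ corresponds bijectively, via the injectivity of $i$ established in the paragraph preceding the lemma, to the finite subset
\[
A_n \;:=\; \{\,hU_H : h\in H,\ d_{X_G}(U,hU)\leq n\,\}\subset V(X_H).
\]
Since $X_H$ is connected and $A_n\cup\{U_H\}$ is finite, it has finite diameter $D_n$ in $X_H$. Set $\eta(n)=\max(n,D_n)$, which is finite-valued and satisfies $\eta(n)\to\infty$.

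Finally, given $h_1,h_2\in H$ with $d_{X_G}(h_1U,h_2U)\leq n$, left-translating by $h_1^{-1}$ preserves $d_{X_G}$ and yields $d_{X_G}(U,h_1^{-1}h_2 U)\leq n$, so $h_1^{-1}h_2U_H\in A_n$ and $d_{X_H}(U_H,h_1^{-1}h_2U_H)\leq D_n\leq \eta(n)$; translating back by $h_1$ (which preserves $d_{X_H}$) gives the desired bound. The argument is essentially routine once the equivariance framework is in place; the only point one should explicitly verify is that $H$ acts by graph automorphisms on $X_G$ (immediate, as $H<G$ and $X_G$ is constructed $G$-equivariantly), so no serious obstacle is anticipated.
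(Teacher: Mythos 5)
Your proof is correct and follows essentially the same route as the paper's: define $\eta(n)$ by measuring in $X_H$ the preimage of the radius-$n$ ball about $U$ in $X_G$, and reduce to the basepoint $U_H$ via left $H$-translation. If anything, your write-up is more complete than the paper's, since you explicitly verify that $\eta(n)$ is finite using local finiteness of $X_G$ together with the injectivity $hU_H\mapsto hU$ (the paper leaves this implicit), and you dispose of the requirement $\eta(n)\to\infty$ by the harmless replacement $\eta(n)=\max(n,D_n)$ instead of a separate argument.
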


\begin{proof}
Define 
$$\eta(n) := \max \{ d_{X_H}(U_H, hU_H) : h \in H,\, d_{X_G}(U, hU) \leq n \}.$$
It is sufficient to prove that if $d_{X_G}(U, hU) \leq n$ then $d_{X_H}(U_H, hU_H) \leq \eta(n)$ which follows from the definition of $\eta.$
It remains to see that $\eta(n) \to \infty$ as $n\to\infty$. For some $M>0$, let if possible $\eta(n) \leq M$ for all $n$. Then, for every $N\in\mathbb N$ there exists $h_N \in H$ with $d_{X_H}(U_H, h_N U_H) \leq M$ but $d_{X_G}(U, h_N U) \leq N$. This yields infinitely many points in the ball of radius $M$ around $U_H$ in $X_H$. This gives a contradiction, as $X_H$ is locally finite, balls of finite radius contain finitely many vertices. 
\end{proof}

Now, we make a record of the following basic lemma.

\begin{lemma}\label{compact-open-infinite}
Open subgroups of a non-discrete locally compact group are infinite. In particular, the intersection of any two open subgroups of a locally compact group is infinite.
 \end{lemma}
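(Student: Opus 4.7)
The plan is to prove the contrapositive of the main assertion: if a locally compact (Hausdorff) group $G$ has a finite open subgroup $H$, then $G$ must be discrete. Once this is established, the second sentence follows at once, since the intersection of two open subgroups is again an open subgroup, and hence, in any non-discrete locally compact group, must be infinite by the first part.

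First I would observe that, as $G$ is Hausdorff by our standing convention, every finite subset of $G$ is closed, and any finite Hausdorff topological space carries the discrete topology. Thus if $H$ is finite and open in $G$, then $H$ with the subspace topology is a finite Hausdorff space and therefore discrete; in particular the singleton $\{e\}$ is open in $H$. The key step is then to promote this to openness in $G$: since $H$ is open in $G$, any subset of $H$ that is open in the subspace topology is open in $G$, so $\{e\}$ is open in $G$. Left translation by any $g \in G$ is a homeomorphism of $G$, so $\{g\} = g\{e\}$ is open for every $g$, which means $G$ is discrete. This contradicts the non-discreteness assumption, proving that every open subgroup of a non-discrete locally compact group is infinite.

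For the ``in particular'' clause, let $H_1, H_2$ be two open subgroups of $G$ (read as a non-discrete locally compact group, since the statement would otherwise fail for the trivial subgroup in a discrete group). Then $H_1 \cap H_2$ is a subgroup of $G$ and is open as an intersection of two open sets. By the first part, applied to the open subgroup $H_1 \cap H_2$, this intersection must be infinite.

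There is no real obstacle here; the argument is a short topological exercise. The only subtle point worth flagging is that the ``in particular'' statement implicitly requires $G$ to be non-discrete, since in a discrete group one could take $H_1 = H_2 = \{e\}$ and obtain a finite (trivial) intersection; under that tacit hypothesis the deduction from the first part is immediate.
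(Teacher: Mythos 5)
Your argument is correct and is essentially the same as the paper's: both deduce that a finite open subgroup, being a finite Hausdorff space, is discrete, and that an open discrete subgroup forces the ambient group to be discrete, contradicting the hypothesis. Your remark that the ``in particular'' clause tacitly requires $G$ to be non-discrete is a fair observation about the statement, but it does not affect the proof.
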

 \begin{proof}
     Let $G$ be a locally compact group and $U$ be an open subgroup of $G$. Then, with the subspace topology, $U$ is also locally compact. Since singletons are closed, if $U$ is finite, then the topology on $U$ is discrete. This, in turn, implies that $G$ is discrete, and we arrive at a contradiction. 
 \end{proof}
For discrete groups, the theory of ends is well-known. In \cite{kron-moller}, the authors defined the space of ends for TDLC groups. We end this subsection with the following definition that turns out to be useful for us later.
 
 \begin{definition}\label{definition-rough-ends}
    Let $G$ be a compactly generated TDLC group. The space of {\em rough ends} of $G$ is defined as the end space of a Cayley-Abels graph of $G$. 
\end{definition}

Since Cayley-Abels graphs are unique up to quasiisometry and the locally finite quasiisometric graphs have homeomorphic end spaces, the above definition is well-defined.
 
\subsection{Hyperbolic TDLC groups and their boundaries}\label{subsection-gromov-hyp-tdld-and-boundaries} In \cite{gromov-hypgps}, Gromov introduced the notion of hyperbolic geodesic metric space and proved that hyperbolicity is an invariant of quasiisometry, see also \cite{bridson-haefliger}. It is well-known that any two Cayley-Abels graphs of a compactly generated TDLC group are quasiisometric \cite{kron-moller}. This motivates the following well-defined notion of a hyperbolic TDLC group. 
 
\begin{definition}\label{definition-hyperbolic-tdlc-group}
    A compactly generated TDLC group $G$ is said to be {\em hyperbolic} if some (hence any) Cayley-Abels graph of $G$ is a Gromov hyperbolic space. 
\end{definition}

In \cite{gromov-hypgps}, Gromov introduced the notion of a boundary of a hyperbolic geodesic metric space, which is now known as the Gromov boundary named after him. In the same essay, he proved that quasiisometric proper hyperbolic spaces have homeomorphic Gromov boundaries (see also \cite[III.H]{bridson-haefliger}). This motivates the notion of the Gromov boundary of a hyperbolic TDLC group. Let $G$ be a hyperbolic TDLC group. One then defines the \emph{Gromov boundary} $\partial G$ to be the Gromov boundary of some (hence any) Cayley-Abels graph $X$ of $G$: the set of equivalence classes of geodesic rays in $X$ starting from a base point $x_0$, i.e.
\[
\partial G := \partial X= \{\, [\gamma] \mid \gamma\text{ is a geodesic ray in } X \text{ such that } \gamma(0)=x_0 \},
\]
where two rays are equivalent if their Hausdorff distance is finite. Define $\overline{X}:=X\cup\partial X.$ Then, with a natural topology, $\overline{X}$ is a compact metrizable space, and $X$ is an open dense subset of $\overline{X}.$ For details, one is referred to \cite[III.H]{bridson-haefliger}.
\subsection{Relatively hyperbolic TDLC groups}\label{subsection-rel-hyp-tdrh-1}
The notion of relatively hyperbolic groups was introduced by Gromov for discrete groups \cite{gromov-hypgps}. Since then, it has been elaborated by several authors, for example, see \cite{farb-relhyp,bowditch-relhyp,drutu-sapir,osin-book,groves-manning}. To define relative hyperbolicity for TDLC groups, Arora-Pedroza \cite{arora-pedroja} took an approach due to Bowditch \cite{bowditch-relhyp} and Osin \cite{osin-book}. 
We first recall their definition of relative hyperbolic TDLC groups, some useful facts, which will be required later in the paper.

Let $G$ be a TDLC group, and $\mathcal H$ be a finite collection of open subgroups in $G$. A pair $(G,\mathcal H)$ is called a {\it proper pair} if no pair of distinct non-compact subgroups in $\mathcal H$ are conjugate in $G$ (page-832, \cite{arora-pedroja}).

\begin{definition}[Cayley-Abels graph of $G$ with respect to $\mathcal H$ \cite{arora-pedroja}]\label{definition-cayley-abels-graph-with-resepct-to}
Suppose $(G,\mathcal H)$ is a proper pair and  $G$ is acting cocompactly, discretely (pointwise stabilizers of cells are open subgroups) on a connected simplicial graph $X$. Then, $X$ is said to be {\em Cayley-Abels graph of $G$ with respect to $\mathcal H$} if
\begin{enumerate}
    \item[{$(1)$}] the edge $G$-stabilizers are compact,
    \item[{$(2)$}] the vertex $G$-stabilizers are either compact or conjugates of subgroups in $\mathcal H$,
    \item[{$(3)$}] every $H\in\mathcal H$ is the $G$-stabilizer of a vertex of $X$,
    \item[{$(4)$}] any pair of vertices of $X$ with the same $G$-stabilizer $H\in\mathcal H$ are in the same $G$-orbit if $H$ is non-compact.
    \end{enumerate}
	 \end{definition}
The group $G$ is said to be {\em compactly generated relative to $\mathcal H$} if there exists a compact set $K\subset G$ such that $K\cup (\bigcup\mathcal H)$ algebraically generates $G$.
\vspace{.2cm}

{\bf Existence of Cayley-Abels graph relative to $\mathcal H$:} Let $K$ be a compact subset of $G$ such that $G=\langle K\cup (\bigcup_{H\in\mathcal H}H)\rangle$. Since $K$ is compact there is a finite subset $S\subset G$ containing the identity of $G$ such that $K\subset SU.$
 Define a graph $X$ with $V(X):=G/U\cup(\bigcup_{H\in\mathcal H}G/H)$ and $E(X):=\{\{gU,gsU\}|g\in G, s\in S\}\cup\{\{gU,gH\}|g\in G, H\in\mathcal H\}$. Note that $G$ acts discretely, there are only finitely many orbits of vertices and edges on $X$, edge stabilizers are compact open, and vertex stabilizers are conjugates of $U$ or $H\in\mathcal H$. Other properties for being a Cayley-Abels graph also follow, see \cite[Proposition 4.6]{arora-pedroja}.
\vspace{.2cm}

 We collect a few properties of a Cayley-Abels graph of the proper pair $(\mathcal G,\mathcal H)$ in the form of the following proposition.

\begin{proposition}\label{proposition-properties-rel-cayley-graph}
    Let $(G,\mathcal H)$ be a proper pair. Then, we have the following:
    \begin{enumerate}
        \item \cite[Theorem F]{arora-pedroja} $G$ is compactly generated relative to $\mathcal H$ if and only if there exists a Cayley-Abels graph of $G$ relative to $\mathcal H$.
        \item \cite[Theorem H]{arora-pedroja} Suppose $X_1$ and $X_2$ are two Cayley-Abels graphs of $G$ with respect to $\mathcal H$. Then, $X_1$ and $X_2$ are quasiisometric, and $X_1$ is fine if and only if $X_2$ is fine.
    \end{enumerate}
\end{proposition}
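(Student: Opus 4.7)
Both parts of the proposition are stated as citations from Arora--Pedroza, so the plan is to sketch the arguments I would use to verify them. I would handle parts $(1)$ and $(2)$ separately: $(1)$ is a bi-directional construction, while $(2)$ asserts uniqueness of the coarse geometry together with invariance of the finer combinatorial condition of fineness.

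For $(1)$, the forward direction is essentially the explicit construction given in the paragraph preceding the proposition: starting from a compact relative generating set $K$ and a compact open subgroup $U$, pick a finite symmetric $S \ni 1$ with $K \subset SU$ and form the graph $X$ with $V(X) = G/U \sqcup \bigsqcup_{H \in \mathcal H} G/H$ and the edges indicated in that construction. Connectedness follows from the fact that $K \cup \bigcup \mathcal H$ generates $G$, cocompactness from the finiteness of $S$ and $\mathcal H$, discreteness from openness of $U$ and each $H \in \mathcal H$, and the stabilizer/orbit conditions by inspection. For the converse, given a Cayley--Abels graph $X$ of $G$ relative to $\mathcal H$, pick a vertex $v_0$ with compact open stabilizer $U$, a finite set of $G$-orbit representatives for edges, and for each representative $e$ a group element $g_e$ moving one of its endpoints into the $G$-orbit of $v_0$. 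A standard path-lifting argument in the connected graph $X$, combined with the fact that every non-compact vertex stabilizer is conjugate into $\mathcal H$, shows that the finite set $\{g_e\}$ together with $U$ and $\bigcup \mathcal H$ generates $G$; thus $K := U \cup \{g_e\}$ is a compact relative generating set.

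For $(2)$, I would establish the quasi-isometry first by adapting the \v{S}varc--Milnor lemma to graphs that need not be locally finite. Fix base vertices $v_1 \in V(X_1)$ and $v_2 \in V(X_2)$ with compact open stabilizers and define $\varphi : V(X_1) \to V(X_2)$ by $\varphi(g \cdot v_1) = g \cdot v_2$, which is well-defined up to the compact stabilizer of $v_1$. Each edge of $X_1$ corresponds, up to the action of this stabilizer, to one of finitely many $G$-orbit representatives; cocompactness of the $G$-action on $X_2$ then uniformly bounds the $d_{X_2}$-distance between the images of adjacent vertices, and running the same argument with the roles of $X_1$ and $X_2$ swapped yields the lower Lipschitz bound. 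Parabolic vertices are handled using the fact that each $H \in \mathcal H$ stabilizes a unique parabolic vertex in each $X_i$, giving a canonical $G$-equivariant identification.

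The fineness invariance is the main obstacle, since fineness is not preserved under arbitrary quasi-isometries, and the equivariance of $\varphi$ must be used essentially. Assuming $X_1$ is fine, given an edge $e$ of $X_2$ and $n \in \mathbb N$, one must bound the number of simple circuits in $X_2$ through $e$ of length at most $n$. After $G$-translating, $e$ may be assumed to contain $v_2$ as an endpoint. Each such circuit lifts, via $\varphi^{-1}$ together with short paths between preimages of adjacent vertices, to a combinatorial loop in $X_1$ of uniformly bounded length passing near a lift $e'$ of $e$. Fineness of $X_1$ then provides only finitely many such loops modulo the compact stabilizer of $e'$; since the $G$-stabilizer of $e$ is also compact, this descends to finitely many circuits through $e$ in $X_2$. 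The delicate step is controlling the lifts at parabolic vertices, where both graphs have infinite valence, but the canonical $G$-equivariant correspondence of parabolic vertices forces compatible local combinatorics on both sides.
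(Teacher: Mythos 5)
First, a structural point: the paper does not prove this proposition at all --- both items are imported verbatim from Arora--Pedroza (Theorems F and H of \cite{arora-pedroja}) and used as black boxes. So there is no internal argument to compare against, and what you have written is a reconstruction of the cited results. Your outline of part $(1)$ and of the quasiisometry in part $(2)$ is essentially the standard relative \v{S}varc--Milnor package and is sound in spirit, with one small slip: a Cayley--Abels graph of $G$ relative to $\mathcal H$ need not contain any vertex with compact stabilizer (for $G=A\ast_C B$ acting on its Bass--Serre tree with $\mathcal H=\{A,B\}$, every vertex stabilizer is a conjugate of $A$ or $B$), so ``pick a vertex $v_0$ with compact open stabilizer'' can fail; one should base the argument at an edge, whose stabilizer is compact open by the discreteness and compactness hypotheses in Definition \ref{definition-cayley-abels-graph-with-resepct-to}, or work orbit-by-orbit since the graph is not vertex-transitive.

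The genuine gap is in the fineness transfer. Fineness bounds the number of \emph{embedded} paths of length $\leq n$ between two vertices (equivalently, circuits through a fixed edge), and nothing more: in a fine graph with a parabolic vertex $p$ of infinite valence there are already infinitely many \emph{non-embedded} loops of length $4$ through $p$ (go to any neighbour and come back), so the set of bounded-length combinatorial loops near a fixed edge is typically infinite. Your argument takes an arc in $X_2$, pushes it through $\varphi^{-1}$, and interpolates with short paths; the result is a bounded-length loop in $X_1$ that is in general \emph{not} embedded, so ``fineness of $X_1$ then provides only finitely many such loops'' is not a step you are entitled to. To close this one needs something sharper, e.g.\ Bowditch's reformulation that in a fine graph the set of vertices lying on \emph{some} arc of length $\leq n$ between two fixed vertices is finite, combined with an argument that each vertex of the original arc in $X_2$ is recovered finite-to-one from its image (using compactness of the non-parabolic stabilizers and the bijective correspondence of parabolic vertices). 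The sentence ``the canonical $G$-equivariant correspondence of parabolic vertices forces compatible local combinatorics on both sides'' is naming the difficulty rather than resolving it; this is precisely the step the cited Theorem H of \cite{arora-pedroja} exists to supply, and it is the part of your sketch that does not yet constitute a proof.
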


The definition of a fine graph is also important to us.

\begin{definition}\label{definition-fine-graph}
    A simplicial graph $X$ is said to be {\em fine} if for any pair of vertices $u,v$ and $n\in\mathbb N$, there are finitely many embedded paths of length $n$ from $u$ to $v$.
\end{definition}
 
Now, we are ready to define the notion of relatively hyperbolic TDLC groups. 

\begin{definition}[TDRH-I]\label{definition-tdrh-1}
For a proper pair $(G,\mathcal H)$, the topological group $G$ is said to be hyperbolic relative to $\mathcal H$ if there exists a Cayley-Abels graph of $G$ with respect to $\mathcal H$ that is fine and hyperbolic. 
\end{definition}
From Proposition \ref{proposition-properties-rel-cayley-graph}, the previous definition does not depend on the choice of the Cayley-Abels graph of $G$ with respect to $\mathcal H$.

\subsection{Graphs of topological groups}\label{graphs-of-topological-groups}
In geometric group theory, the notion of graphs of groups is classical. One is referred to Serre's book on trees \cite{serre-trees} for a detailed account of definitions and results.
\begin{definition}\label{definition-graphs-of-groups}
Let $\mathcal Z$ be a graph. A {\em graph of topological groups} $(\mathcal G,\mathcal Z)$ over $\mathcal Z$ consists of the following data:
\begin{enumerate}
    \item[{$(1)$}] For each $v\in V(\mathcal Z)$, there is a topological group $G_v$ called the {\em vertex group}.
    \item[{$(2)$}] For each $e\in E(\mathcal Z)$, there is a topological group $G_e$ called the {\em edge group.}
    \item[{$(3)$}] For each edge $e\in E(\mathcal Z)$ with vertices $v$ and $w$, there are open topological embeddings $G_e\to G_v$ and $G_e\to G_w.$
\end{enumerate}
\end{definition}
The fundamental group of the graph of group $(\mathcal G,\mathcal Z)$ is denoted by $\pi_1(\mathcal G,\mathcal Z)$.
The next proposition shows that there is canonical topology on $\pi_1(\mathcal G,\mathcal Z).$
\begin{proposition}\textup{\cite[Proposition 8.B.9 and Proposition 8.B.10]{cornulier-harpe-book-metric-geometry}}\label{proposition-topology-graphs-of-groups}
There is a unique topology on $\pi_1(\mathcal G,\mathcal Z)$ such that the inclusion map $G_v\to \pi_1(\mathcal G,\mathcal Z)$ is an open topological embedding. Moreover, we have the following:
\begin{enumerate}
\item[{$(1)$}] If the edge groups of $(\mathcal G,\mathcal Z)$ are locally compact, then $\pi_1(\mathcal G,\mathcal Z)$ is locally compact.

\item[{$(2)$}] If the vertex groups of $(\mathcal G,\mathcal Z)$ are compactly generated then $\pi_1(\mathcal G,\mathcal Z)$ is compactly generated.
\end{enumerate}
\end{proposition}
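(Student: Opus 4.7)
The plan is to construct the required topology explicitly by fixing a basepoint and then verify all the claimed properties. For uniqueness, I would observe that if $\tau_1$ and $\tau_2$ are two group topologies on $\pi_1(\mathcal G, \mathcal Z)$ both making every inclusion $G_v \hookrightarrow \pi_1(\mathcal G, \mathcal Z)$ an open topological embedding, then both topologies must have the same neighborhood base at the identity, namely a neighborhood base of the identity in $G_v$ for any fixed $v$. Since a group topology is determined by a neighborhood base at the identity via left translation, this forces $\tau_1 = \tau_2$.

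For existence, fix a basepoint $v_0 \in V(\mathcal Z)$ and let $\mathcal N$ be a neighborhood base of the identity in the topological group $G_{v_0}$. I would declare a subset $U \subseteq \pi_1(\mathcal G, \mathcal Z)$ to be open precisely when for every $g \in U$ there exists $V \in \mathcal N$ with $gV \subseteq U$. The main verifications are: (a) this is a genuine topological group structure, with continuity of multiplication and inversion at the identity following from the corresponding property in $G_{v_0}$ together with translation-invariance; (b) the inclusion $G_{v_0} \hookrightarrow \pi_1(\mathcal G, \mathcal Z)$ is an open topological embedding, which holds essentially by construction; and (c) the inclusion $G_v \hookrightarrow \pi_1(\mathcal G, \mathcal Z)$ is an open topological embedding for every other vertex $v$.

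The hard part will be (c), and this is the step I view as the main obstacle. The connectedness of $\mathcal Z$ lets me choose an edge path $v_0 = u_0, u_1, \ldots, u_n = v$ with edge $e_i$ between $u_{i-1}$ and $u_i$. Each edge group $G_{e_i}$ embeds as an open subgroup of both $G_{u_{i-1}}$ and $G_{u_i}$, so the topologies on $G_{u_{i-1}}$ and $G_{u_i}$ agree on $G_{e_i}$. Chaining these compatibilities along the path, one deduces that the topology inherited by $G_v$ from $\pi_1(\mathcal G, \mathcal Z)$ agrees with its original topology and that $G_v$ is itself open in $\pi_1(\mathcal G, \mathcal Z)$. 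Care is needed because the normal-form description of elements of $\pi_1(\mathcal G, \mathcal Z)$ must be reconciled with the ambient topology; the cleanest route is to check that the two candidate neighborhood bases at the identity in $G_v$ (the given one, and the one inherited via the chain of $G_{e_i}$'s) are mutually refining.

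Assertions (1) and (2) then follow quickly. For (1), if each edge group is locally compact, then each vertex group $G_v$ contains an open, locally compact subgroup $G_e$ (for any edge $e$ incident to $v$), hence $G_v$ is locally compact at the identity and therefore locally compact; since $G_v$ is open in $\pi_1(\mathcal G, \mathcal Z)$, local compactness transfers to the fundamental group. For (2), with $\mathcal Z$ finite and each $G_v$ compactly generated by a compact set $K_v$, the fundamental group is generated by $\bigcup_v K_v$ together with the finite set of stable letters associated to the edges of $\mathcal Z$ outside a fixed spanning tree, and this generating set is compact as a finite union of compact and finite sets.
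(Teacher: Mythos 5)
First, note that the paper offers no proof of this proposition: it is quoted verbatim from Cornulier--de la Harpe \cite[Propositions 8.B.9 and 8.B.10]{cornulier-harpe-book-metric-geometry}, so your attempt can only be measured against the standard argument there. Your uniqueness argument and your treatment of (1) and (2) are fine, and the overall skeleton (build the topology from a neighbourhood filter $\mathcal N$ of the identity in a base vertex group) is the right one.

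The genuine gap is in your step (a), not in step (c) where you place the weight. Declaring $U$ open when every $g\in U$ contains some $gV$ with $V\in\mathcal N$ produces a \emph{left-invariant} topology, and continuity of multiplication and inversion at the identity in $G_{v_0}$ together with left-translation-invariance does \emph{not} make this a group topology. By Bourbaki's criterion you must additionally verify the conjugation axiom: for every $g\in\pi_1(\mathcal G,\mathcal Z)$ and every $U\in\mathcal N$ there exists $V\in\mathcal N$ with $gVg^{-1}\subseteq U$ (equivalently, right translations are continuous). This is exactly where the hypothesis that the edge groups embed as \emph{open} subgroups, together with the normal form theorem for $\pi_1(\mathcal G,\mathcal Z)$, must be used: one reduces to $g$ a vertex-group element (where the axiom is continuity of conjugation in that vertex group) or a stable letter $t_e$ (where $t_e^{-1}Vt_e=\alpha_e(V)$ for $V$ inside the open copy of $G_e$, and one needs $\alpha_e$ to be a homeomorphism onto an open subgroup), and then chains these along a normal form. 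Your plan also has an ordering problem: as written, step (c) uses that translations are homeomorphisms of $\pi_1(\mathcal G,\mathcal Z)$ to spread the coset decomposition $G_v=\bigcup_x xG_e$ into open sets, but that presupposes the group topology, which in turn requires the conjugation axiom, which requires the compatibility of the vertex-group topologies you are trying to establish in (c). The correct order is to verify the filter axioms (including conjugation) directly from the normal form first, and only then deduce that each $G_v\to\pi_1(\mathcal G,\mathcal Z)$ is an open topological embedding.
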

In the forthcoming section, we prove combination theorems for topological groups with some extra properties. To prove such a theorem, it is sufficient to prove the theorem for an amalgamated free product and an HNN extension of topological groups. These are special cases of graphs of groups. In particular, if $\mathcal Z$ is an interval, $\pi_1(\mathcal G,\mathcal Z)$ is the amalgamated free product. If $\mathcal Z$ is a simple loop, then $\pi_1(\mathcal G,\mathcal Z)$ is an HNN extension. We now explain in detail these constructions.

Suppose $A,B$, and $C$ are topological groups such that $i_A:C\to A$ and $i_B:C\to B$ are topological isomorphisms onto open subgroups of $A$ and $B$, respectively. Then, we have an {\em amalgamated free product} $A\ast_C B$ given by the presentation $\langle S_A,S_B|R_A,R_B, i_A(c)=i_B(c) \text{ for all } c\in C \rangle$ if $\langle S_A|R_A\rangle$ and  $\langle S_B|R_B\rangle$ are presentations of $A$ and $B$, respectively. {\em Throughout the paper, we assume that the amalgamated free products are non-trivial, i.e. the edge group is not equal to both vertex groups}. If $C$ is a subgroup of $A$ and $\alpha$ is a topological isomorphism from $C$ to an open subgroup of $A$, then we have {\em HNN extension} $A\ast_C$ whose presentation is given by $\langle S_A,t|R_A, t^{-1}ct=\alpha(c) \text{ for all } c\in C \rangle$ if  $\langle S_A|R_A\rangle$ is a presentation of $A$. Let $G$ denote an amalgamated free product $A\ast_C B$ or an HNN extension $A\ast_C$. Then, by Proposition \ref{proposition-topology-graphs-of-groups}, there is a unique topology on $G$ such that the inclusions $A\to G, B\to G$, and $C\to G$ are topological isomorphisms onto open subgroups of $G$. {\em Throughout the paper, we use this topology on amalgamated free products and HNN extensions of topological groups.} We record the following lemma, which is relevant to us.

\begin{lemma}\label{lemma-tdlc-graph-of-groups-if-edge-is-tdlc}
    Suppose $G= A \ast_C B$ or $G = A \ast_C$. If $G$ is a TDLC group, then $A, B$, and $C$ are TDLC groups. Conversely, if $C$ is TDLC, then $G$ is TDLC.
\end{lemma}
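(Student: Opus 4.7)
The plan is to exploit the fact, guaranteed by Proposition \ref{proposition-topology-graphs-of-groups}, that $A$, $B$, and $C$ all sit inside $G$ as open subgroups ($C$ is open in $A$ by hypothesis and $A$ is open in $G$, hence $C$ is open in $G$). The lemma then reduces to two standard observations about topological groups: an open subgroup of a TDLC group is itself TDLC, and a topological group containing a clopen TDLC subgroup is TDLC.

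For the forward direction, suppose $G$ is TDLC. Since $A$, $B$, and $C$ are all open subgroups of $G$, each inherits local compactness (openness preserves local compactness in a locally compact Hausdorff space) and total disconnectedness (which passes to arbitrary subspaces). Hence $A$, $B$, and $C$ are TDLC.

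For the converse, suppose $C$ is TDLC. Every open subgroup of a topological group is automatically closed, since its complement is a union of cosets, each of which is open. Thus $C$ is a clopen subgroup of $G$ containing the identity. A compact neighborhood of the identity in $C$ is simultaneously a compact neighborhood of the identity in $G$, so $G$ is locally compact. For total disconnectedness, the connected component $G_0$ of the identity in $G$ is a connected subset of $G$ containing the identity; since $C$ is clopen, $G_0 \subset C$, and since $C$ is totally disconnected, $G_0$ must be trivial. Therefore $G$ is TDLC. I do not anticipate any serious obstacle here: the argument is a direct unpacking of Proposition \ref{proposition-topology-graphs-of-groups} together with elementary facts about open subgroups of topological groups.
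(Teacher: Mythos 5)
Your proof is correct, and the forward direction coincides with the paper's (total disconnectedness passes to subspaces, local compactness to open subgroups). The converse is where you diverge: the paper invokes van Dantzig's theorem inside $C$ to produce a neighborhood basis of compact open subgroups at the identity of $C$, and then observes that, $C$ being open in $G$, this is also such a basis in $G$, which simultaneously certifies local compactness and total disconnectedness of $G$. You instead verify the two properties separately and more elementarily: a compact neighborhood of $e$ in the open subgroup $C$ is one in $G$, and the identity component $G_0$ lies in the clopen subgroup $C$, hence is trivial. Your route avoids van Dantzig entirely and would work verbatim for any topological group containing an open TDLC subgroup, while the paper's version has the small bonus of exhibiting the compact open subgroup basis of $G$ explicitly, which is the form of the TDLC property most often used later. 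Both arguments are complete; there is no gap in yours.
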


\begin{proof}
Observe that a subgroup of a totally disconnected topological group is totally disconnected. Thus, if $G$ is TDLC, then $A,B$, and $C$ are TDLC. Conversely, suppose $C$ is TDLC. Then, by the definition of the topology on $G$, $C$ is embedded in $G$ as an open subgroup. By van Dantzig’s theorem, there is a neighborhood basis at the identity in $C$ consisting of compact open subgroups. Since $C$ is open in $G$, this neighborhood basis at the identity in $C$ gives a neighborhood basis of compact open subgroups at the identity in $G$.  Hence, $G$ is a TDLC group.
\end{proof}

From Lemma \ref{lemma-tdlc-graph-of-groups-if-edge-is-tdlc}, it follows that if the edge groups of $(\mathcal G,\mathcal Z)$ are TDLC then $\pi_1(\mathcal G,\mathcal Z)$ is TDLC.
\vspace{.2cm}

{\bf Bass-Serre trees of amalgamated free product and HNN extension:} Let $G = A \ast_{C} B$. The construction of the Bass--Serre tree of $G$ is classical \cite{serre-trees}. For the sake of completeness, we explain the construction as follows.

Let $\tau$ be a unit interval with vertices $v_{A}$ and $v_{B}$. Define an equivalence relation $\sim$ on $G \times \tau$ induced by the equivalences.
 $$(g_{1},v_{A}) \sim (g_{2},v_{A}) \text{ if } g_{1}^{-1} g_{2} \in A,$$
 $$(g_{1},v_{B}) \sim (g_{2},v_{B}) \text{ if } g_{1}^{-1} g_{2} \in B,$$
 $$(g_{1},t) \sim (g_{2},t) \text{ if } g_{1}^{-1} g_{2} \in C, t \in \tau.$$
 Then $G \times \tau /_\sim$ is called the {\em Bass-Serre tree} $T$ of $G$. 

 If $G=A\ast_C$ is an HNN extension. Then, the Bass-Serre tree $T$ of $G$ is defined as follows:
 
 The vertex set of $T$ is the set of left cosets of $A$ in $G$. The set of edges of $T$ is the set of left cosets of $C$ in $G$. For all $g \in G$, vertices $gA$ and $gtA$ are connected by the edge $gC$.
 
\vspace{.2cm} 
{\bf Notation:} Let $T$ be the Bass-Serre tree of amalgamated free product or HNN extension. Let $\pi$ denote the natural projection from $T$ to $\tau$ or a simple loop. For each vertex $v$ of $T$, we denote by $G_v$ the $G$-stabilizer of $v$. 
\section{A combination theorem for relatively hyperbolic TDLC groups}\label{section-rel-hyp-combination-theorem}

The main goal of this section is to prove Theorem \ref{rel-combination-theorem-amalgam}. To prove that, it is sufficient to consider the amalgamated free product and HNN extension case. Throughout this section, all relatively hyperbolic groups are TDRH-I (Definition \ref{definition-tdrh-1}). 

\subsection{Amalgamated free product case.}\label{subsection-rel-hyp-amalgam-combination-construction}  Let $G$ be as in Theorem \ref{rel-combination-theorem-amalgam}. Let $i_A:C\to A$ and $i_B:C\to B$ be topological isomorphisms onto open subgroups of $A$ and $B$, respectively. As discussed in Subsection \ref{subsection-rel-hyp-tdrh-1}, let $X_A$ be the Cayley-Abels graph of $A$ with respect to $\mathcal H_A$ such that the set of cosets of $i_A(C)$ in $A$ is a subset of the vertices of $X_A$. Similarly, let $X_B$ be the Cayley-Abels graph of $B$ with respect to $\mathcal H_B$ such that the set of cosets of $i_B(C)$ in $B$ is a subset of the vertices of $X_B$. Define a graph $Y$ as the union of $X_A$, $X_B$, and $\tau$, where the coset $i_A(C)$ is identified with $v_A$ and the coset $i_B(C)$ is identified with $v_B$. Then, we define the graph $X$ as an equivalence relation on $G\times Y$ induced by 
$$(g_1,y_1)\sim(g_2,y_2) \text{ if }y_1,y_2\in X_A,\; g_2^{-1}g_1\in A \text{ and } g_2^{-1}g_1y_1=y_2,$$
$$(g_1,y_1)\sim(g_2,y_2) \text{ if }y_1,y_2\in X_B,\; g_2^{-1}g_1\in B \;\text{and}\; g_2^{-1}g_1y_1=y_2.$$
We have a natural $1$-Lipschitz subjective map from $X$ to $T$ by sending
$(g,X_A)$ to $(g,v_A)$ and $(g,X_B)$ to $(g,v_B)$ for $g\in G$. Also, there is a natural action of $G$ by left on $X.$

\begin{lemma}\label{lemma-rel-cayley-abels-graph-amalgam}
    The graph $X$ is a Cayley-Abels graph of $G$ with respect to $\mathcal H_A\cup\mathcal H_B$.
\end{lemma}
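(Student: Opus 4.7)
My plan is to verify, one by one, that $X$ satisfies the four axioms of Definition \ref{definition-cayley-abels-graph-with-resepct-to} together with the requirements that $G$ act cocompactly and discretely on the connected simplicial graph $X$. First I would set up notation carefully: a vertex of $X$ is the equivalence class of some $(g,w)$ with $w \in X_A \cup X_B$, and any such vertex can be written either as $g \cdot v$ with $v\in V(X_A)$ (when $w\in X_A$) or as $g\cdot v$ with $v\in V(X_B)$. The edges break into three types: $G$-translates of edges of $X_A$, of edges of $X_B$, and of the interval $\tau$. I would then observe that the $G$-stabilizer of a vertex $v\in V(X_A)\subset V(X)$ equals the $A$-stabilizer $A_v$ (since $g\cdot v=v$ in $X$ forces $g\in A$ by the equivalence relation), and similarly for $V(X_B)$; the $G$-stabilizer of the edge $\tau$ is exactly $C$.

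Next I would verify connectedness. Each $G$-translate $g\cdot X_A$ and $g\cdot X_B$ is connected (inheriting connectedness of $X_A$ and $X_B$) and any two such sheets that share a vertex are glued through the corresponding interval edge. Since $G=A*_C B$ is generated by $A\cup B$, any $g\in G$ can be written as a product $g_1g_2\cdots g_k$ with factors alternately in $A$ and $B$; this expresses a chain of overlapping sheets connecting the basepoint to the sheet containing $g\cdot X_A$, and hence $X$ is connected. Cocompactness and discreteness of the $G$-action then follow: the orbits of vertices and edges in $X$ are in bijection with the (finitely many) orbits of vertices and edges in $X_A$, $X_B$, and $\tau$, and vertex stabilizers are open in $G$ because they are open in $A$ or $B$, which are in turn open in $G$ by Proposition \ref{proposition-topology-graphs-of-groups}.

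Then I would check the four axioms. For (1), edges from $X_A$ or $X_B$ have compact $G$-stabilizers because these equal the compact edge stabilizers of the Cayley-Abels graphs $X_A$ and $X_B$; the interval edge has stabilizer $C$, compact by hypothesis. For (2), vertex stabilizers coming from $X_A$ or $X_B$ are, by the Cayley-Abels hypothesis on $X_A$ and $X_B$, either compact or conjugate (in $A$ or $B$) to an element of $\mathcal H_A$ or $\mathcal H_B$, hence conjugate in $G$ to an element of $\mathcal H_A\cup\mathcal H_B$. For (3), each $H\in\mathcal H_A$ already stabilizes a vertex of $X_A\subset X$ by axiom (3) for $X_A$, and similarly for $\mathcal H_B$. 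For (4), suppose $H\in\mathcal H_A$ is non-compact and stabilizes two vertices $u,w$ of $X$; the image of each under the projection $X\to T$ is fixed by $H$, and since $H$ is not compact it fixes a unique vertex of $T$ (here I would need the fact that a non-compact subgroup of $A*_CB$ contained in a vertex group cannot fix two vertices of the Bass-Serre tree, because edge stabilizers of $T$ are conjugates of $C$, which is compact). So $u,w$ lie in the same sheet $g\cdot X_A$ and the conclusion reduces to axiom (4) for $X_A$.

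The main obstacle I expect is axiom (4): pinning down that a non-compact $H\in\mathcal H_A\cup\mathcal H_B$ cannot fix vertices lying in two different sheets. This requires the Bass--Serre-tree argument above, whose clean formulation relies on the compactness of $C$; this is also where one needs $(G,\mathcal H_A\cup\mathcal H_B)$ to be a proper pair, which I would verify separately by checking that a conjugation in $G$ taking one element of $\mathcal H_A\cup\mathcal H_B$ to another either stays within $A$ or $B$ (handled by axiom (4) for $X_A$ or $X_B$) or produces a non-compact subgroup fixing two vertices of $T$, contradicting compactness of $C$.
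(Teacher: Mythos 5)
Your proposal is correct and follows essentially the same route as the paper: a direct verification that $X$ is connected, that $G$ acts cocompactly and discretely, and that the four axioms of Definition \ref{definition-cayley-abels-graph-with-resepct-to} hold, each being inherited from the corresponding axiom for $X_A$ and $X_B$ together with the compactness of $C$. The only difference is one of detail: the paper simply asserts axiom $(4)$, whereas you supply the Bass--Serre tree argument (a non-compact stabilizer cannot fix two distinct vertices of $T$, since it would then fix an edge and edge stabilizers of $T$ are conjugates of the compact group $C$) needed to confine the two vertices to a single sheet before invoking axiom $(4)$ for $X_A$ or $X_B$.
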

\begin{proof}
From the construction of $X$, we see that $X$ is a connected graph. Note that every vertex of $X$ belongs to $(g,X_A)$ or $(g,X_B)$ for some $g\in G$. Since the action of $A$ on $X_A$ and $B$ on $X_B$ are cocompact, the action of $G$ on $X$ is cocompact. Clearly, the vertex stabilizers are either compact or conjugates of subgroups in $\mathcal H$. The edge stabilizers are also compact (note that the $G$-stabilizer of $\tau$ is $C$). Every $H\in\mathcal H$ is either the $G$-stabilizer of a vertex in $X_A$ or a $G$-stabilizer of a vertex in $X_B$. The last property for being a Cayley-Abels graph with respect to $\mathcal H_A\cup \mathcal H_B$ is also satisfied. This completes the proof of the lemma.
\end{proof}

Proof of Theorem \ref{rel-combination-theorem-amalgam}:
    Let $A$ and $B$ be two hyperbolic groups relative to $\mathcal H_A$ and $\mathcal H_B$, respectively. Then, $X_A$ and $X_B$ are hyperbolic and fine. Thus, $X$ is a tree of hyperbolic spaces such that the edge spaces are the singletons. By \cite{BF}, $X$ is a hyperbolic graph. Since each vertex space is a fine graph, $X$ is also a fine graph. Hence, $G$ is hyperbolic relative to $\mathcal H_A\cup\mathcal H_B$. 
\qed

Note that, if $G$ is hyperbolic relative to $\mathcal H$, and $\mathcal H$ is empty, then $G$ is compactly generated and the Cayley-Abels graph of $G$ with respect to $\mathcal H$ quasiisometric to a Cayley-Abels graph of $G$ (\cite[Theorem 2.9]{kron-moller}), and hence, $G$ becomes a hyperbolic TDLC group.

\begin{corollary}\label{corollary-combination-amalgam-hyp-tdlc}
    Suppose $G=A\ast_C B$ is an amalgamated free product such that $A,B$ are compactly generated TDLC groups and $C$ is compact. Then, $G$ is hyperbolic if and only if $A$ and $B$ are hyperbolic. 
\end{corollary}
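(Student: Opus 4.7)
The plan is to treat the two directions of the biconditional separately.

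For the direction $A, B$ hyperbolic $\Rightarrow G$ hyperbolic, I would apply Theorem \ref{rel-combination-theorem-amalgam} with empty peripheral collections $\mathcal H_A = \mathcal H_B = \emptyset$. Being compactly generated relative to $\emptyset$ is the same as being compactly generated, and (by the note preceding this corollary) being hyperbolic relative to $\emptyset$ coincides with being a hyperbolic TDLC group. The hypotheses of Theorem \ref{rel-combination-theorem-amalgam} are therefore satisfied, and its conclusion delivers $G$ hyperbolic relative to $\emptyset$, i.e., hyperbolic as a TDLC group.

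For the reverse direction $G$ hyperbolic $\Rightarrow A, B$ hyperbolic, the plan is to analyze the graph $X$ produced by the construction of Subsection \ref{subsection-rel-hyp-amalgam-combination-construction} using ordinary Cayley-Abels graphs $X_A$ of $A$ and $X_B$ of $B$ built with respect to the compact open subgroups $i_A(C) \subset A$ and $i_B(C) \subset B$ (these are compact open since $C$ is compact and the edge embeddings are open topological embeddings). By Lemma \ref{lemma-rel-cayley-abels-graph-amalgam} together with the note after Theorem \ref{rel-combination-theorem-amalgam}, $X$ is quasi-isometric to an ordinary Cayley-Abels graph of $G$, and therefore hyperbolic whenever $G$ is. It then suffices to show that the distinguished copy of $X_A$ inside $X$ is isometrically (or at least quasi-isometrically) embedded, so that $\delta$-hyperbolicity of $X$ passes to $X_A$, giving hyperbolicity of $A$; hyperbolicity of $B$ follows by the symmetric argument.

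The crux is the isometric embedding of $X_A$ in $X$. Structurally, $X$ is a tree of spaces over the Bass-Serre tree $T$ of the amalgam, with vertex spaces the $G$-translates of $X_A$ and $X_B$; compactness of $C$ is precisely what collapses each edge space of this tree-of-spaces to a single $\tau$-edge joining two uniquely determined vertices of adjacent vertex spaces. Suppose, for contradiction, that some geodesic $\gamma$ in $X$ between two vertices of $X_A$ leaves $X_A$ first at a vertex $u$, crossing the unique $\tau$-edge into a translate $aX_B$. Projecting $\gamma$ to $T$ gives a closed loop based at the vertex $A$; since $T$ is a tree, this loop must re-enter $A$ through the very edge along which it left, so $\gamma$ must re-enter $X_A$ at precisely the vertex $u$. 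The intervening portion of $\gamma$ is then a loop based at $u$ of length at least $2$, and excising it produces a strictly shorter path between the same endpoints, contradicting geodesicity of $\gamma$. Hence $\gamma$ remains in $X_A$, and the inclusion $X_A \hookrightarrow X$ is isometric.

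I expect the main obstacle to be the structural assertion that each edge of $T$ corresponds to a single $\tau$-edge between two uniquely determined vertices of adjacent vertex spaces; this rests on the amalgam identity $A \cap B = C$ together with some care with cosets in the TDLC setting (the coset bookkeeping is mildly more delicate than in the discrete case). Once this structural fact is in hand, the tree-projection argument and the inheritance of $\delta$-hyperbolicity by an isometrically embedded geodesic subspace are standard.
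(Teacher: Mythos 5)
Your proposal is correct and follows essentially the same route as the paper: the forward direction via Theorem \ref{rel-combination-theorem-amalgam} with empty peripheral collections, and the converse by observing that the constructed graph $X$ is quasi-isometric to a Cayley-Abels graph of $G$ and that $X_A$, $X_B$ embed isometrically. The only difference is that the paper asserts the isometric embedding ``by construction,'' whereas you supply the (correct) tree-projection argument justifying it.
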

\begin{proof}
    Suppose $A$ and $B$ are hyperbolic groups. Then, $A$ and $B$ are both hyperbolic relative to an empty collection of subgroups, respectively. By Theorem \ref{rel-combination-theorem-amalgam}, $G$ is hyperbolic relative to an empty collection of subgroups. Thus, $G$ is a hyperbolic group. Conversely, suppose $G$ is hyperbolic. Let $X$ be the graph as constructed above by taking $\mathcal H_A=\phi$ and $\mathcal H_B=\phi.$ Note that, by \cite[Theorem 2.9]{kron-moller}, $X$ is quasiisometric to a Cayley-Abels graph of $G$. Since $G$ is hyperbolic, $X$ is a hyperbolic space. By construction of $X$, the inclusion maps $X_A\to X$ and $X_B\to X$ are isometric embeddings. Hence, $A$ and $B$ are hyperbolic.
\end{proof}

 \begin{example}\label{example-amalgam-hyperbolic} Let $\mathbb Q_p$ denotes the field of $p$-adic numbers, and $\mathbb Z_p$ denote the group of $p$-adic integers.
 It is well-known that ${\rm SL}(2,\mathbb Q_p)={\rm SL}(2,\mathbb Z_p)\ast_C {\rm SL}(2,\mathbb Z_p)$, where $C$ is a common compact open subgroup of ${\rm SL}(2,\mathbb Z_p)$ \cite{serre-trees}. Since ${\rm SL}(2,\mathbb Z_p)$ is compact, it is hyperbolic. Although it is known that ${\rm SL}(2,\mathbb Q_p)$ is hyperbolic, however, from Theorem \ref{rel-combination-theorem-amalgam}, we can also see that ${\rm SL}(2,\mathbb Q_p)$ is hyperbolic. Moreover, Cayley-Abels graphs of ${\rm SL}(2,\mathbb Q_p)$ are quasiisometric to the Bass-Serre tree of the splitting.
 \end{example}


\subsection{HNN extension case.}\label{subsection-rel-hyp-hnn-combination-construction} Let $G$ be as in Theorem \ref{theorem-rel-combination-theorem-hnn}.
Let $X_A$ be the Cayley-Abels graph of $A$ with respect to $\mathcal H$ such that the set of the left cosets of $C$ in $A$ is a subset of the set of vertices of $X_A$. Consider the graph $Y$ induced by the following equivalence relation on $G\times X_A$ :
$$(g_1,x_1)\sim(g_2,x_2) \text{ if }x_1,x_2\in X_A,\;g_2^{-1}g_1\in A \text{\;and } g_2^{-1}g_1x_1=x_2.$$
Denote the equivalence class $[g,x]$ by $gx$ and $X_{gA}$ be the subgraph spanned by the vertices in $\{gx \in Y: x\in X_A \}$. These are the connected components of $Y$, which are isometric to $X_A$. Now, we form a new graph $X$ using the graph $Y$ as follows:

For all $g \in G$, the vertices $gA$ and $gtA$ are connected in the Bass-Serre tree $T$. Accordingly, we join the vertices $gC \in X_{gA}$ and $gtC \in X_{gtA}$. We have a natural $1$-Lipschitz projection map from $X$ to $T$ that sends $X_{gA}$ to $gA$. Also, $X$ is equipped with the natural action of $G$ on the left. From the construction of $X$, we have the following, whose proof is similar to the proof of Lemma \ref{lemma-rel-cayley-abels-graph-amalgam}.

\begin{lemma}\label{rel-hyp-cayley-ables-graph-hnn}
    The graph $X$ is a Cayley-Abels graph of $G$ with respect to $\mathcal H$.          \qed
\end{lemma}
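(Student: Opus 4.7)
The plan is to verify the four defining conditions of Definition \ref{definition-cayley-abels-graph-with-resepct-to} for $X$ and the action of $G$, in a manner parallel to the proof of Lemma \ref{lemma-rel-cayley-abels-graph-amalgam} but adapted to the fact that in the HNN construction one glues copies $X_{gA}$ of a single graph $X_A$ via translates of the stable letter $t$ rather than combining two different Cayley-Abels graphs. First I would check that $X$ is a connected simplicial graph on which $G$ acts on the left. Each component $X_{gA}$ of $Y$ is isometric to $X_A$ and therefore connected, and the newly added edges $\{gC,gtC\}$ mirror the edge structure of the Bass-Serre tree $T$; hence paths in $T$ lift under the natural $1$-Lipschitz projection $X\to T$, giving connectedness of $X$.

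Next I would analyse stabilizers to obtain conditions (1) and (2) and discreteness. A vertex of $X$ has the form $gH_0$ for some $g\in G$ and $H_0\in\{U\}\cup\mathcal H$, where $U$ is the compact open subgroup used in the construction of $X_A$, and its $G$-stabilizer is the conjugate $gH_0g^{-1}$, which is either compact or a conjugate of a subgroup in $\mathcal H$. Edges split into two types: those lying inside a single $X_{gA}$, whose stabilizers are conjugates of the compact edge stabilizers of $X_A$; and the new edges $\{gC,gtC\}$, whose stabilizer must fix both endpoints and hence lies in $gAg^{-1}\cap gtAt^{-1}g^{-1}$, which the HNN relations force to equal $gCg^{-1}$, and this is compact since $C$ is compact. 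Cocompactness then follows because $A$ acts cocompactly on $X_A$ while $G$ acts transitively on $\{X_{gA}\}_{gA\in G/A}$, yielding finitely many $G$-orbits of cells.

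Finally, conditions (3) and (4) descend directly from the corresponding properties of $X_A$: every $H\in\mathcal H$ is already realized as the $G$-stabilizer of a vertex of $X_A\subset X$, and if two vertices of $X$ share a non-compact stabilizer, the conjugation argument above forces them to lie in the same translate $g'X_A$, reducing (4) to the known property in $X_A$. The main obstacle I anticipate is the careful bookkeeping for the new cross-component edges $\{gC,gtC\}$: one must verify that the $G$-stabilizer computation is tight and that no unintended identifications occur among such edges, but both follow from Bass-Serre theory, which identifies $T$ as the quotient of $X$ collapsing each $X_{gA}$ to a point and pins down the vertex and edge stabilizers of the tree as conjugates of $A$ and $C$, respectively.
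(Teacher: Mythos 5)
Your proposal is correct and takes essentially the same route as the paper, which simply records that the proof is analogous to the amalgamated free product case (Lemma \ref{lemma-rel-cayley-abels-graph-amalgam}) and checks the conditions of Definition \ref{definition-cayley-abels-graph-with-resepct-to}. The only minor imprecision is your claim that the stabilizer of a cross-component edge $\{gC,gtC\}$ equals $gCg^{-1}$: it is in general the possibly smaller subgroup $g\left(C\cap tCt^{-1}\right)g^{-1}$, but this is still a compact open subgroup, so the conclusion is unaffected.
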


Now, by following the proof of Theorem \ref{rel-combination-theorem-amalgam}, the proof of the following theorem follows. This completes the proof of Theorem \ref{theorem-rel-combination-theorem-hnn}. For the convenience of the reader, we restate Theorem \ref{theorem-rel-combination-theorem-hnn}.

\begin{theorem}
Suppose $A$ is a compactly generated TDLC group relative to collections of open subgroups $\mathcal H$.
Suppose $G=A\ast_C$ is an HNN extension such that $A$ is hyperbolic relative to $\mathcal H$ and $C$ is compact. Then, $G$ is hyperbolic relative to $\mathcal H.$            \qed
\end{theorem}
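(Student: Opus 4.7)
The plan is to imitate the proof of Theorem \ref{rel-combination-theorem-amalgam} using the Cayley-Abels graph $X$ constructed in Subsection \ref{subsection-rel-hyp-hnn-combination-construction}. By Lemma \ref{rel-hyp-cayley-ables-graph-hnn}, $X$ is already a Cayley-Abels graph of $G$ with respect to $\mathcal H$, so it suffices to verify that $X$ is hyperbolic and fine; the conclusion will then follow directly from Definition \ref{definition-tdrh-1}. I would view $X$, together with the natural $1$-Lipschitz projection to the Bass--Serre tree $T$ of $G=A\ast_C$, as a tree of metric spaces whose vertex spaces are the subgraphs $X_{gA}$ (each isometric to $X_A$) and whose edge spaces are the singleton edges joining $gC \in X_{gA}$ to $gtC \in X_{gtA}$.

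For hyperbolicity, I would invoke the Bestvina--Feighn combination theorem \cite{BF}. Since $A$ is hyperbolic relative to $\mathcal H$, the Cayley-Abels graph $X_A$ is hyperbolic, and therefore so is each vertex space $X_{gA}$; since the edge spaces in $X$ are single edges, the qi-embedding condition and the annuli flaring condition of \cite{BF} are trivially satisfied. It follows that $X$ is a hyperbolic graph.

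For fineness, I would argue that any embedded path of length $n$ in $X$ between two vertices $u$ and $v$ projects to an embedded path in $T$; since $T$ is a tree there are only finitely many such projected paths of bounded combinatorial length. Each such projection determines a finite sequence of vertex spaces $X_{g_iA}$ traversed and specifies the endpoints inside each $X_{g_iA}$ (namely the relevant cosets of $C$ or the endpoints $u,v$). The fineness of each $X_{g_iA} \cong X_A$, supplied by $A$ being relatively hyperbolic and Proposition \ref{proposition-properties-rel-cayley-graph}(2), then bounds the number of ways to fill in each segment inside a vertex space. Multiplying these finitely many choices yields finitely many embedded paths of length $n$ from $u$ to $v$, so $X$ is fine.

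The main obstacle I anticipate is bookkeeping in the fineness step: one needs to check that distinct embedded paths in $X$ are determined by distinct tuples of vertex-space subpaths, and that edges of $X$ used to cross between vertex spaces are correctly accounted for when bounding the length inside each $X_{g_iA}$. Once this is organized — which is straightforward because edge spaces are single edges, so no identification is lost at the interfaces — combining fineness and hyperbolicity of $X$ with Lemma \ref{rel-hyp-cayley-ables-graph-hnn} gives a fine hyperbolic Cayley-Abels graph of $G$ with respect to $\mathcal H$, so $G$ is hyperbolic relative to $\mathcal H$.
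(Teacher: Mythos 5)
Your proposal is correct and follows essentially the same route as the paper: the paper proves this theorem by citing Lemma \ref{rel-hyp-cayley-ables-graph-hnn} and then repeating verbatim the argument for Theorem \ref{rel-combination-theorem-amalgam}, namely viewing $X$ as a tree of uniformly hyperbolic, fine vertex spaces with singleton edge spaces, invoking \cite{BF} for hyperbolicity, and deducing fineness of $X$ from fineness of the vertex spaces. Your fineness argument is in fact spelled out in more detail than the paper's one-line assertion, and the bookkeeping you flag goes through because adjacent vertex spaces are joined by a single edge, so an embedded path in $X$ projects to a non-backtracking path in $T$.
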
   

The idea of the proof of the following corollary is the same as the proof of Corollary \ref{corollary-combination-amalgam-hyp-tdlc}. Hence, we omit the details.

\begin{corollary}\label{corollary-combination-hnn-hyperbolic-tdlc}
Suppose $G=A_{\ast_C}$ where $A$ is a compactly generated TDLC group and $C$ is a compact open subgroup of $G$. Then, $G$ is hyperbolic if and only if $A$ is hyperbolic.    \qed
\end{corollary}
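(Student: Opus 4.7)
The plan is to mirror the proof of Corollary \ref{corollary-combination-amalgam-hyp-tdlc}, substituting the HNN construction from Subsection \ref{subsection-rel-hyp-hnn-combination-construction} for the amalgam one. Both directions of the equivalence rely on the fact that being hyperbolic as a TDLC group coincides with being hyperbolic relative to the empty collection $\mathcal H = \emptyset$: by \cite[Theorem 2.9]{kron-moller}, a Cayley-Abels graph of $G$ relative to $\emptyset$ is quasi-isometric to an ordinary Cayley-Abels graph of $G$.

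For the direction in which $A$ is hyperbolic, I would apply Theorem \ref{theorem-rel-combination-theorem-hnn} with $\mathcal H = \emptyset$ to deduce that $G = A \ast_C$ is hyperbolic relative to $\emptyset$, hence hyperbolic as a TDLC group. Conversely, assuming $G$ is hyperbolic, I would form the graph $X$ of Subsection \ref{subsection-rel-hyp-hnn-combination-construction} with $\mathcal H = \emptyset$. By Lemma \ref{rel-hyp-cayley-ables-graph-hnn} this $X$ is a Cayley-Abels graph of $G$ relative to $\emptyset$, and so by \cite[Theorem 2.9]{kron-moller} it is quasi-isometric to an ordinary Cayley-Abels graph of $G$; hyperbolicity of $G$ thus makes $X$ hyperbolic. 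It then remains to realize $X_A$ isometrically inside $X$, concretely as the component $X_{eA}$ of the intermediate graph $Y$. Once this is done, hyperbolicity of $X$ descends to $X_A$, and hence to $A$.

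The one step that requires care, and which I expect to be the main technical point, is the isometric embedding $X_A \hookrightarrow X$ via $X_{eA}$. The argument should exploit the tree structure of the Bass-Serre tree $T$ of the HNN extension: any path in $X$ joining two vertices of $X_{eA}$ projects to a closed walk based at $eA$ in $T$, and since $T$ is a tree, every cross-edge of $X$ traversed by the path must be used the same number of times in each direction. A short case analysis on excursions then shows that leaving and re-entering $X_{eA}$ costs at least two cross-edges without shortening the internal distance between the exit and re-entry vertices, so the shortest path between two vertices of $X_{eA}$ already lies inside $X_{eA}$. This is exactly parallel to the corresponding step in the amalgam case, so the proof carries over essentially verbatim.
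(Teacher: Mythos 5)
Your proposal is correct and follows essentially the same route as the paper, which itself only remarks that the HNN case is proved exactly as Corollary \ref{corollary-combination-amalgam-hyp-tdlc}: apply Theorem \ref{theorem-rel-combination-theorem-hnn} with $\mathcal H=\emptyset$ for one direction, and for the converse use that the graph $X$ of Subsection \ref{subsection-rel-hyp-hnn-combination-construction} is quasi-isometric to a Cayley-Abels graph of $G$ and contains $X_A$ isometrically (indeed convexly, since each edge of the Bass–Serre tree has a unique lift, so any excursion out of $X_{eA}$ must re-enter through the very vertex it exited and can be excised). Your tree-based justification of the embedding is exactly the detail the paper leaves implicit.
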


\begin{example}\label{example-hnn-hyperbolic}
   \begin{enumerate}
    \item[{$(1)$}] Let $\alpha$ be an automorphism of ${\rm SL}(2,\mathbb Z_p)$, and $G=\mathbb Z\ltimes_{\alpha} {\rm SL}(2,\mathbb Z_p)$ be the semidirect product of ${\rm SL}(2,\mathbb Z_p)$. Since $G$ is also an HNN extension of ${\rm SL}(2,\mathbb Z_p)$ over itself, $G$ is hyperbolic by Theorem \ref{theorem-rel-combination-theorem-hnn}. Moreover, the Cayley-Abels graph of $G$ is quasiisometric to the real line.
    
    \item[{$(2)$}] Let $\alpha\in \mathrm{Aut}(\mathbb Q_p)$ be an automorphism of multiplication by $p$. Let $G=\mathbb Z\ltimes_{\alpha} \mathbb Q_p$ be the semidirect product. Also, $G=\mathbb Z_p\ast_{\mathbb Z_p\simeq p\mathbb Z_p}$ \cite[Example 8.C.17(1)]{cornulier-harpe-book-metric-geometry}. Therefore, $\mathbb Z\ltimes_{\alpha}\mathbb Q_p$ is compactly generated and is hyperbolic by Theorem \ref{theorem-rel-combination-theorem-hnn}. 
    
\item[{$(3)$}] For $d\geq 2$, let $T_d$ denotes the $d$-regular tree. Let $G =\mathrm{Aut}(T_d)$ be the automorphism group of $T_d$. Since $T_d$ is a Cayley-Abels graph for $G$, it follows that $G$ is hyperbolic. However, one can also view $G$ as a multiple HNN extension of the stabilizer of a vertex in $T_d$, see Example \ref{example-aut(t_d)} for details. Therefore, $G$ is hyperbolic by Theorem
    \ref{theorem-rel-combination-theorem-hnn}.
\end{enumerate}
\end{example}

\begin{remark}
    A subgroup of a TDLC hyperbolic group need not be hyperbolic. For example, the semidirect product 
$\mathbb Z \ltimes \mathbb Q_p$
is hyperbolic (see Example~\ref{example-hnn-hyperbolic}), yet its normal subgroup \(\mathbb Q_p\) is not hyperbolic, because it is not compactly generated. In the discrete setting, there are well-known examples. This gives an example in a non-discrete setting.
 \end{remark}

  \begin{remark}\label{remark-Z}
     $(1)$ Suppose $G$ is a discrete hyperbolic group. Then, it cannot contain a subgroup isomorphic to $\mathbb Z\times \mathbb Z$. In contrast, within the TDLC setting, ${\rm SL}(2,\mathbb Z_p)\times {\rm SL}(2,\mathbb Z_p)$ is compact, and hence hyperbolic, yet it contains a copy of $\mathbb Z\times \mathbb Z$. A similar phenomenon occurs with ${\rm SL}(2,\mathbb Z_p)\times \mathbb Z$ which, as noted in 
     Example \ref{example-hnn-hyperbolic}(1) is also hyperbolic, and it contains a copy of $\mathbb Z\times \mathbb Z.$ These examples show that, in the TDLC setting, having a subgroup isomorphic to $\mathbb{Z}\times \mathbb{Z}$ is not an obstruction to hyperbolicity.

     $(2)$ It is known that ${\rm SL}(2,\mathbb Q_p)$ acts on $(p+1)$-regular tree $T_{p+1}$ which nothing but the Bruhat-Tits building for ${\rm SL}(2,\mathbb Q_p)$ \cite{serre-trees}. Let $T_1$ and $T_2$ be two copies of $T_{p+1}$. Then, ${\rm SL}(2,\mathbb Q_p)\times {\rm SL}(2,\mathbb Q_p)$ acts properly cocompactly on $T_1\times T_2$. Since $T_1\times T_2$ is not hyperbolic, ${\rm SL}(2,\mathbb Q_p)\times{\rm SL}(2,\mathbb Q_p)$ is also not hyperbolic. However, if $A={\rm SL}(2,\mathbb Q_p)\times{\rm SL}(2,\mathbb Q_p)$ and $C$ is any compact open subgroup of ${\rm SL}(2,\mathbb Q_p)\times{\rm SL}(2,\mathbb Q_p)$, then the amalgamated free product $A\ast_C \overline{A}$ is hyperbolic relative to $\{A,\overline{A}\}$ where $\overline{A}$ denotes a copy of ${\rm SL}(2,\mathbb Q_p)\times{\rm SL}(2,\mathbb Q_p)$.
\end{remark} 
 
\subsection{The general case.}
By induction and using Theorem \ref{rel-combination-theorem-amalgam} and Theorem \ref{theorem-rel-combination-theorem-hnn}, one can prove Theorem \ref{theorem-rel-hyp-combination-general-case}. Hence, we skip the details.

\begin{theorem}\label{theorem-rel-hyp-combination-general-case}
Let $\mathcal Z$ be a finite graph, and let $(\mathcal G,\mathcal Z)$ be a graph of groups such that the following hold:
\begin{enumerate}
    \item[{$(i)$}] For each $v\in V(\mathcal Z)$, let $\mathcal H_v$ be a finite collection of subgroups of the vertex group $G_v$, and $(G_v,\mathcal H_v)$ be a proper pair. Suppose $G_v$ is hyperbolic relative to $\mathcal H_v$.
    \item[{$(ii)$}] For each $e\in E(\mathcal Z)$, suppose the edge group $G_e$ is compact subgroup of the adjacent vertex groups.
\end{enumerate}
    Then, the fundamental group of $(\mathcal G,\mathcal Z)$ is hyperbolic relative to $\bigcup_{v\in V(\mathcal Z)}\mathcal H_v$.
    \qed
\end{theorem}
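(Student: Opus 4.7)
The plan is to proceed by induction on the number of edges $|E(\mathcal Z)|$, reducing the general case to the amalgamated free product case (Theorem \ref{rel-combination-theorem-amalgam}) and the HNN extension case (Theorem \ref{theorem-rel-combination-theorem-hnn}) already established.

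For the base case, when $|E(\mathcal Z)|=0$, the graph $\mathcal Z$ is a single vertex $v$ and $\pi_1(\mathcal G,\mathcal Z)=G_v$, which is hyperbolic relative to $\mathcal H_v$ by assumption (i). For the inductive step, assume the theorem holds for all graphs of groups satisfying (i) and (ii) with strictly fewer edges than $\mathcal Z$, and pick any edge $e\in E(\mathcal Z)$ with endpoints $v_1,v_2$. I would split into two cases according to whether $e$ is separating or not.

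If $e$ is separating, deleting $e$ from $\mathcal Z$ produces two connected subgraphs $\mathcal Z_1,\mathcal Z_2$ with $v_i\in V(\mathcal Z_i)$; the graph of groups structure restricts to $(\mathcal G_1,\mathcal Z_1)$ and $(\mathcal G_2,\mathcal Z_2)$, each with strictly fewer edges and still satisfying (i) and (ii). By the inductive hypothesis, $A:=\pi_1(\mathcal G_1,\mathcal Z_1)$ and $B:=\pi_1(\mathcal G_2,\mathcal Z_2)$ are hyperbolic relative to $\mathcal H_A:=\bigcup_{v\in V(\mathcal Z_1)}\mathcal H_v$ and $\mathcal H_B:=\bigcup_{v\in V(\mathcal Z_2)}\mathcal H_v$, respectively. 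By the standard graph of groups decomposition, $\pi_1(\mathcal G,\mathcal Z)\cong A\ast_{G_e} B$, with $G_e$ compact by (ii), so Theorem \ref{rel-combination-theorem-amalgam} gives that $\pi_1(\mathcal G,\mathcal Z)$ is hyperbolic relative to $\mathcal H_A\cup\mathcal H_B=\bigcup_{v\in V(\mathcal Z)}\mathcal H_v$. If instead $e$ is non-separating, let $\mathcal Z'=\mathcal Z\setminus\{e\}$ with the restricted graph of groups structure $(\mathcal G',\mathcal Z')$; by induction $A:=\pi_1(\mathcal G',\mathcal Z')$ is hyperbolic relative to $\bigcup_{v\in V(\mathcal Z')}\mathcal H_v=\bigcup_{v\in V(\mathcal Z)}\mathcal H_v$, and $\pi_1(\mathcal G,\mathcal Z)\cong A\ast_{G_e}$ is an HNN extension with compact edge group. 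Theorem \ref{theorem-rel-combination-theorem-hnn} then yields the desired conclusion.

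The technical points to verify, which I anticipate as the main obstacle, are twofold. First, one must check that at each stage the hypotheses of the two combination theorems are met: the intermediate group $A$ (and $B$ in Case 1) must be compactly generated relative to its peripheral structure, which follows from the inductive hypothesis together with the fact that having a Cayley-Abels graph with respect to $\mathcal H$ is equivalent to being compactly generated relative to $\mathcal H$ by Proposition \ref{proposition-properties-rel-cayley-graph}(1). Second, one must ensure that $(G_v,\mathcal H_v)$ remaining a proper pair propagates to $(\pi_1(\mathcal G,\mathcal Z),\bigcup_v\mathcal H_v)$ being a proper pair; since the edge groups are compact, conjugations in $\pi_1(\mathcal G,\mathcal Z)$ cannot collapse two distinct non-compact subgroups drawn from different vertex groups, while within a single vertex group this is assumed in (i). Together with the topology on the fundamental group from Proposition \ref{proposition-topology-graphs-of-groups}, this completes the induction and hence the proof.
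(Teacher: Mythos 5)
Your proposal is correct and follows exactly the route the paper intends: the paper states that Theorem \ref{theorem-rel-hyp-combination-general-case} follows "by induction and using Theorem \ref{rel-combination-theorem-amalgam} and Theorem \ref{theorem-rel-combination-theorem-hnn}" and omits the details, and your induction on $|E(\mathcal Z)|$ with the separating/non-separating edge dichotomy is the standard way to carry this out. Your two flagged technical points (relative compact generation of the intermediate groups and propagation of the proper-pair condition, the latter using compactness of edge stabilizers in the Bass--Serre tree) are precisely the details the paper leaves to the reader.
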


In particular, we deduce if $(G,\mathcal Z)$ is a graph of groups over $\mathcal Z$ such that each vertex group is hyperbolic, and each edge group is a compact subgroup of the adjacent vertex groups, then the fundamental group of $(G,\mathcal Z)$ is a hyperbolic TDLC group.


\section{Compactly generated relatively hyperbolic TDLC groups}

Motivated by the work of Farb \cite{farb-relhyp}, Bowditch \cite{bowditch-relhyp}, and \cite{groves-manning} on discrete relatively hyperbolic groups, in this section, we introduce two notions of relative hyperbolicity for a compactly generated TDLC group. Then, we show that all three notions of relative hyperbolicity discussed in this paper are equivalent (Theorem \ref{theorem-rh-equivalence}). We conclude the section by giving a proof of Theorem \ref{Theorem-rel-hyp-intersection-is-compact}.

\subsection{Relative hyperbolicity in the sense of Bowditch and Groves-Manning.}  \label{subsection-tdrh-II} 
We start here by recalling the definition of a combinatorial horoball.

\begin{definition}\label{augmented-Cayley-Abels-graph}
		 Let $\Gamma$ be a graph. The {\em combinatorial horoball based at $\Gamma$}, denoted by $\mathcal B(\Gamma)$, is a graph defined as follows:
		\begin{enumerate}
			\item[{$(1)$}] $\mathcal B^{(0)}=\Gamma^{(0)}\times (\{0\}\cup \mathbb{N})$.
			\item[{$(2)$}] $\mathcal{B}^{(1)}$ contains the following three types of edges:
			
			$(a)$ If $v$ and $w$ are joined by an edge in $\Gamma$, then there is an edge connecting $(v,0)$ and $(w,0)$.
			
			$(b)$ For $k>0$ and $v,w\in \Gamma^{(0)}$, if $0<d_{\Gamma}(v,w)\leq 2^k$ then there is a single edge connecting $(v,k)$ and $(w,k)$.
			
			$(c)$ For $k\geq 0$ and $v\in \Gamma^{(0)}$, there is an edge joining $(v,k)$ and $(v,k+1)$.
		\end{enumerate}
	\end{definition}
    
\begin{remark}\label{remark-2}
	\begin{enumerate}
			\item[{$(1)$}] As the full subgraph of $\mathcal{B}(\Gamma)$ containing the vertices $\Gamma^{(0)}\times\{0\}$ is isomorphic to $\Gamma$, we may think of $\Gamma$ as a subset of $\mathcal B(\Gamma)$.
	
	\item[{$(2)$}] If $\Gamma$ is locally finite, then it follows that $\mathcal B(\Gamma)$ is also locally finite.
    \end{enumerate}
\end{remark}

  Let G be a compactly generated TDLC group, and $\mathcal H=\{H_1,\dots, H_n\}$ be a finite collection of compactly generated open subgroups of $G$. Let $U$ be a compact open subgroup of $G$. Let $K$ be a compact generating set of $G$, and let $K_i$ be a compact generating set of $H_i$ for all $i$. Note that $U\cap H_i$ is an infinite compact open subgroup of $H_i$ for each $i$ by Lemma \ref{compact-open-infinite}. Then, there exists a finite symmetric set $S\subset G$ containing the identity element such that $K\subset SU$. Also, for each $i$, there is a finite symmetric set $S_i\subset H_i$ containing identity such that $K_i\subset S_i(U\cap H_i)$. We assume that $S_i\subset S$ for all $i$. Thus, the Cayley-Abels graph $Y_i$ of $H_i$ is embedded in the Cayley-Abels graph $X$ of $G$. Moreover, this is a proper embedding by Lemma \ref{lemma-proper-embedding-subgraph}. Therefore, we can treat $Y_i$ as a subgraph of $X$.

\begin{definition}\label{definition-augmented-space}
	  For each $i\in\{1,2,\dots,n\}$, let $T_i$ be a left transversal for $H_i$ in $G$. For each $i$ and each $t\in T_i$, let $Y_{i,t}$ be the full subgraph of the Cayley-Abels graph $X$ containing the vertices $tY_i$. Each $Y_{i,t}$ is isomorphic to the Cayley-Abels graph of $H_i$. Then the {\em augmented Cayley-Abels graph} of $G$ is the graph
	$$X^h(K,S,U)=X \cup (\{\mathcal{B}(Y_{i,t}):t\in T_i, 1\leq i\leq n \})$$
where the graphs $Y_{i,t}\subset X$ and $Y_{i,t}\subset \mathcal{B}(Y_{i,t})$ are identified as suggested in Remark \ref{remark-2}.
\end{definition}
Whenever the choices of $K, S$, and $U$ are clear from the context, we will write $X^h$ instead of $X^h(K,S,U)$. Note that the graph $X^h$ is locally finite. The following proposition shows, up to quasiisometry, that Definition \ref{augmented-Cayley-Abels-graph} does not depend on the choice of compact generating sets and compact open subgroups.

\begin{proposition}\label{well-definedness-augmented-cayley-graph}
    Suppose $K$ and $K'$ are compact generating sets of $G$, and $K_i, K_i'$ are compact generating sets of $H_i$ for each $i$. Let $U$ and $U'$ be compact open subgroups of $G$, and $ S$ and $ S'$ be finite symmetric generating sets containing the identity such that $K\subset SU$ and $K'\subset S'U'$. Suppose $S$ and $S'$ contain finite symmetric sets $S_i$ and $S_i'$ for $K_i$ and $K_i'$, respectively. Then, the augmented Cayley-Abels graphs $X^h(K,S,U)$ and $X'^h(K',S',U')$ are quasiisometric.
\end{proposition}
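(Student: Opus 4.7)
The plan is to produce a quasiisometry $X^h \to X'^h$ by passing through a common refinement. By van Dantzig's theorem applied to the compact open subgroup $U \cap U'$, choose a compact open subgroup $U'' \subseteq U \cap U'$, together with a finite symmetric set $S''$ containing $S \cup S' \cup \{1\}$ and refining each $S_i$ and $S_i'$, so that $K \cup K' \subseteq S''U''$. Let $X''$ denote the corresponding Cayley-Abels graph of $G$ and $X''^h$ its augmentation. By symmetry it suffices to construct a quasiisometry $X^h \to X''^h$.

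First I would establish the quasiisometry on the bases. The $G$-equivariant map $V(X'') \to V(X)$ sending $gU'' \mapsto gU$ has uniformly bounded fibers (of cardinality $|U/U''| < \infty$) and is $1$-Lipschitz, so any coarse section realizes it as a quasiisometry. Applied analogously with $H_i$ in place of $G$ --- and using Lemma \ref{lemma-proper-embedding-subgraph} to guarantee that the Cayley-Abels graphs of $H_i$ embed properly into $X$ and $X''$ with comparable parameters --- this yields compatible quasiisometries from each parabolic subgraph $Y_{i,t} \subseteq X$ onto its counterpart $Y_{i,t}'' \subseteq X''$, with constants independent of the indices $i$ and $t$. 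The next step is to extend through the horoballs via the following standard fact, which one may either cite or prove along Groves--Manning lines: any $(L,A)$-quasiisometry of graphs $f: \Gamma_1 \to \Gamma_2$ induces (by a formula essentially $(v,n) \mapsto (f(v), n+c)$ for some constant $c$ determined by $L$) a quasiisometry $\mathcal B(\Gamma_1) \to \mathcal B(\Gamma_2)$ whose constants depend only on $(L,A)$. This is because in $\mathcal B(\Gamma)$ the distance between two base vertices $(v,0), (w,0)$ is coarsely $2\log_2 d_\Gamma(v,w)$, so quasiisometries of bases translate to quasiisometries of horoballs after an additive shift in the depth coordinate.

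Now assemble these pieces: define $\Phi: X^h \to X''^h$ using the base quasiisometry on $V(X)$ and the extended horoball quasiisometry on each $\mathcal B(Y_{i,t})$. The main obstacle is verifying that $\Phi$ is a quasiisometry globally rather than merely piecewise. For this I would observe that distinct combinatorial horoballs $\mathcal B(Y_{i,t})$ are disjoint above level zero --- they meet only inside the base $X$ --- so any geodesic in $X^h$ decomposes into alternating maximal subsegments lying in $X$ or in a single horoball, and $\Phi$ agrees with the base map on each $Y_{i,t} \subseteq V(X)$ by construction. Matching the distortions across the overlaps, the piecewise quasiisometry constants combine into global ones. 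Uniformity across all $i$ and $t$ follows from the finiteness of $\mathcal H$ and from the fact that horoballs over different cosets $tH_i$ (for fixed $i$) are isomorphic as graphs. By symmetry, $X'^h$ is also quasiisometric to $X''^h$, completing the proof.
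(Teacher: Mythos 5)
Your argument is correct and shares the paper's overall architecture (produce a quasiisometry of the bases that coarsely matches up the peripheral subgraphs $Y_{i,t}$, then promote it to the augmented spaces), but you execute both steps differently. For the base, the paper simply invokes Kr\"on--M\"oller's theorem that any two Cayley-Abels graphs of $G$ are quasiisometric and observes that the resulting map carries each $Y_{i,t}$ into a uniform neighborhood of $Y'_{i,t}$; your common-refinement construction via $U''\subseteq U\cap U'$ proves this from scratch and has the advantage that the coset map sends $V(Y''_{i,t})$ exactly onto $V(Y_{i,t})$, so no coarse matching of peripherals is needed (though you should note that a vertex may lie in several peripheral subgraphs, so the coarse section must be checked to be well-defined up to the uniformly bounded fiber diameter). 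For the promotion step, the paper cites Healy--Hruska (Theorem 1.2 of \cite{hruska-healy}) as a black box, whereas you sketch the proof: the logarithmic distance formula in combinatorial horoballs plus the decomposition of $X^h$-geodesics into maximal subsegments alternating between $X$ and single horoballs. That sketch is sound --- each subsegment has length at least $1$, so the per-segment additive errors are absorbed into the multiplicative constant, and the lower bound follows by applying the same estimate to the coarse inverse --- but this gluing step is precisely the content of the cited theorem and is the one place where your write-up asserts rather than proves; it would need the two or three lines above to be complete. The trade-off is self-containedness and explicitness (your route) versus brevity and reliance on established literature (the paper's).
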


\begin{proof}
    Let $X(K,S,U)$ and $X'(K',S',U')$ be the Cayley-Abels graphs of $G$. Then, by \cite[Theorem 2.7$^{+}$]{kron-moller}, there is a quasiisometry $\psi:X(K,S,U)\to X'(K',S',U')$. Let $Y_{i,t}$ and $Y_{i,t}'$ denote the various copies of the Cayley-Abels graph of $H_i$ in $X(K,S,U$ and in $X(K',S',U')$, respectively. From the definition of $\psi$, it follows that $\psi(Y_{i,t})\subset N_D(Y_{i,t}')$ and $\psi^{-1}(Y_{i,t}')\subset N_D(Y_{i,t})$ for a uniform constant $D\geq 0.$ By \cite[Theorem 1.2]{hruska-healy}, it follows that $\psi$ induces a quasiisometry from $X^h$ to $X'^h$ (see also \cite[Theorem 2.8]{mackay-sisto-maps-boundaries}). This completes the proof of the proposition.
\end{proof}

\begin{definition}[TDRH-II]\label{definition-tdrh-II}
	 Let $(G,\mathcal H)$ be a proper pair. The group $G$ is said to be {\em hyperbolic relative to $\mathcal{H}$} if the augmented Cayley-Abels graph $X^h$ of $G$ is a Gromov hyperbolic space.
	In this situation, we also say that the proper pair $(G,\mathcal{H})$ is a relatively hyperbolic group.
\end{definition}

Given a proper hyperbolic geodesic metric space, one can define the Gromov boundary associated to it \cite{gromov-hypgps,bridson-haefliger}. Bowditch \cite{bowditch-relhyp} generalizes this notion in the context of discrete relatively hyperbolic groups. In a similar manner, we define the Bowditch boundary of a relatively hyperbolic TDLC group.

\begin{definition}\label{definition-bowditch-boundary}
	 Suppose $G$ is a compactly generated TDLC group and $G$ is hyperbolic relative to $\mathcal{H}$. The {\em Bowditch boundary} of $G$ with respect to $\mathcal H$ is defined to be the Gromov boundary of the augmented Cayley-Abels graph $X^h$ of $G$. It is denoted by $\partial_{rel}(G)$.
\end{definition}

Since $X^h$ is locally finite, $\overline{X^h}:= X^h\cup \partial_{rel}(G)$ is a compact metrizable space (see \cite[III.H, Exercise 3.18(4)]{bridson-haefliger}, \cite[Chapter 7]{GhH}).

\subsection{Relative hyperbolicity in the sense of Farb}\label{tdrh-2} In this subsection, we adapt the definition of relative hyperbolicity due to Farb \cite{farb-relhyp} in our setup. We retain the notation from Subsection \ref{subsection-tdrh-II}.

\begin{definition}[Coned-off Cayley-Abels graph] \label{definition-coned-off-graph}
     Let $X$ be the Cayley-Abels graph of $G$, and $Y_i$ be the Cayley-Abels graph of $H_i$. For each $i$ and each $t\in T_i$, let $Y_{i,t}$ be the full subgraph of the Cayley-Abels graph $X$ containing the vertices $tY_i$. Form a new graph $\hat{X}(K,S,U)$, called the {\em coned-off Cayley-Abels graph with respect to $\mathcal H$}, as follows. For each $Y_{i,t}$, add a new vertex, called the {\em cone point}, $v(Y_{i,t})$ to $X$ and add an edge of length $1$ from this new vertex to each element of $Y_{i,t}$.
\end{definition}

When it is clear from the context, we shall write $\hat{X}$ in place of $\hat{X}(K, S, U).$ In the following proposition, we show that the coned-off Cayley-Abels graphs for $G$ are unique up to quasiisometry.

\begin{proposition}\label{propsition-welldefined-ferb}
    Suppose $K$ and $K'$ are compact generating sets of $G$, and $K_i, K_i'$ are compact generating sets of $H_i$ for each $i$. Let $U$ and $U'$ be compact open subgroups of $G$, and $ S$ and $ S'$ be two finite symmetric generating sets containing the identity such that $K\subset SU$ and $K'\subset S'U'$. Suppose $S$ and $S'$ contain finite symmetric sets $S_i$ and $S_i'$ for $K_i$ and $K_i'$, respectively. Then, the coned-off Cayley-Abels graphs $\hat{X}(K,S,U)$ and $\hat{X'}(K',S',U')$ are quasiisometric.
\end{proposition}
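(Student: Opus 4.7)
The approach mirrors the proof of Proposition \ref{well-definedness-augmented-cayley-graph}. By \cite[Theorem 2.7$^{+}$]{kron-moller}, there is a quasiisometry $\psi \colon X(K,S,U) \to X'(K',S',U')$ between the underlying Cayley-Abels graphs. Note that the peripheral subgraph $Y_{i,t} = tY_i$ depends only on the coset $tH_i \in G/H_i$: if $tH_i = t'H_i$ then $tH_iU/U = t'H_iU/U$, so the vertex sets agree. Consequently, the families $\{Y_{i,t}\}$ in $X$ and $\{Y'_{i,t}\}$ in $X'$ are canonically indexed by the same set $\bigsqcup_{i=1}^{n} G/H_i$, independently of the choice of transversals. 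As in the proof of Proposition \ref{well-definedness-augmented-cayley-graph}, from the definition of $\psi$ together with Lemma \ref{lemma-proper-embedding-subgraph}, one obtains a constant $D \geq 0$ such that $\psi(Y_{i,t}) \subset N_D(Y'_{i,t})$ and $\psi^{-1}(Y'_{i,t}) \subset N_D(Y_{i,t})$ for all $i$ and all $t$.

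Next, extend $\psi$ to a map $\hat{\psi} \colon \hat{X} \to \hat{X'}$ by setting $\hat{\psi}(x) = \psi(x)$ for $x \in V(X)$ and $\hat{\psi}(v(Y_{i,t})) = v(Y'_{i,t})$ for each cone point. To verify that $\hat{\psi}$ is a quasiisometric embedding, one translates paths between the two coned-off graphs edge by edge. A length-$2$ traversal in $\hat{X}$ from $u \in Y_{i,t}$ to $w \in Y_{i,t}$ through the cone point $v(Y_{i,t})$ can be replaced in $\hat{X'}$ by a path of length at most $2D + 2$: travel from $\psi(u)$ to the nearest vertex $u' \in Y'_{i,t}$ in at most $D$ edges of $X'$, then from $u'$ through $v(Y'_{i,t})$ to a nearest vertex $w' \in Y'_{i,t}$ to $\psi(w)$ in two cone edges, and finally from $w'$ to $\psi(w)$ in at most $D$ edges. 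Ordinary edges of $X$ are handled by the Lipschitz bound of $\psi$. A symmetric argument applied to $\psi^{-1}$ yields the matching lower distance bound, and quasi-surjectivity is immediate because every cone point of $\hat{X'}$ lies in the image and every ordinary vertex of $X'$ is close to a vertex in $\psi(V(X))$. The resulting quasiisometry constants depend only on those of $\psi$ and on $D$.

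The main technical point is the peripheral-preservation inclusion $\psi(Y_{i,t}) \subset N_D(Y'_{i,t})$, which rests on the coarse compatibility of $\psi$ with the intrinsic coset structure of $(G,\mathcal H)$; once this is in place, the remainder is a direct edge-by-edge length comparison of the kind outlined above. Alternatively, one may invoke a general quasiisometry-invariance theorem for electrified metrics (an analogue of \cite[Theorem 1.2]{hruska-healy} or \cite[Theorem 2.8]{mackay-sisto-maps-boundaries} applied to coned-off graphs rather than horoballed ones) to conclude directly once the peripheral compatibility of $\psi$ is established.
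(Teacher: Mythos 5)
Your argument is correct, and it starts from the same key input as the paper: the Kr\"{o}n--M\"{o}ller quasiisometry $\psi\colon X(K,S,U)\to X'(K',S',U')$ together with the coarse peripheral compatibility $\psi(Y_{i,t})\subset N_D(Y_{i,t}')$ and $\psi^{-1}(Y_{i,t}')\subset N_D(Y_{i,t})$. Where you diverge is in the final step. The paper does not verify anything by hand: it first passes to the induced quasiisometry $\psi^h$ on the augmented Cayley-Abels graphs (Proposition \ref{well-definedness-augmented-cayley-graph}) and then cites \cite[Lemma 9.34(3)]{kapovich-sardar-book} to conclude that $\psi$ induces a quasiisometry of the coned-off graphs. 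You instead extend $\psi$ directly over the cone points and check the quasiisometry inequalities by an edge-by-edge path translation, replacing each length-$2$ cone traversal by a path of length at most $2D+2$ and using the quasi-inverse for the lower bound; this is essentially a self-contained proof of the lemma the paper cites, and it never needs the augmented graphs at all. Your observation that $Y_{i,t}$ depends only on the coset $tH_i$, so that the peripheral families of $X$ and $X'$ are canonically indexed by $\bigsqcup_i G/H_i$, is a point the paper leaves implicit and is worth making explicit. The one place where you are no more rigorous than the paper is the peripheral-preservation inclusion itself: both you and the paper assert it ``from the definition of $\psi$'' (it holds because the Kr\"{o}n--M\"{o}ller map is coset-induced), and Lemma \ref{lemma-proper-embedding-subgraph} does not by itself supply the constant $D$; but since the paper accepts the same step, this is not a gap relative to the intended argument. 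Note also that the paper records a second, even shorter route in Remark \ref{remark-another-proof-welldefine-coned-off}: $\hat{X}(K,S,U)$ and $\hat{X'}(K',S',U')$ are both Cayley-Abels graphs of $G$ with respect to $\mathcal H$ by Lemma \ref{lemma-example-of-coned-off-cayley-graph}, so they are quasiisometric by \cite[Theorem H]{arora-pedroja}.
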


\begin{proof}
    By \cite[Theorem 2.7$^{+}$]{kron-moller}, there exists a quasiisometry $\psi:X(K,S,U)\to X'(K,S',U')$ such that $\psi(Y_{i,t})\subset N_D(Y_{i,t}')$ and $\psi^{-1}(Y_{i,t}')\subset N_D(Y_{i,t})$ for a uniform constant $D\geq 0.$ Also, note that the maps $Y_{i,t}\to X(K,S,U)$ and $Y'_{i,t}\to X'(K',S',U')$ are proper embeddings by Lemma \ref{lemma-proper-embedding-subgraph}. Let $\psi^h:X^h(K,S,U)\to X'^h(K',S',U')$ be the induced quasiisometry from Proposition \ref{well-definedness-augmented-cayley-graph}. Hence, by \cite[Lemma 9.34(3)]{kapovich-sardar-book} (see also \cite[Lemma 1.2.31]{pal-thesis}), $\psi$ induces a quasiisometry from $\hat{X}(K,S,U)$ to $\hat{X'}(K',S',U').$ This completes the proof of the proposition.
\end{proof}

\begin{remark}\label{remark-another-proof-welldefine-coned-off} By Lemma \ref{lemma-example-of-coned-off-cayley-graph}, we see that $\hat{X}(K,S,U)$ and $\hat{X'}(K',S',U')$ are Cayley-Abels graphs with respect to $\mathcal H$. Hence, by \cite[Theorem H]{arora-pedroja}, $\hat{X}(K,S,U)$ is quasiisometric to $\hat{X'}(K',S',U').$
\end{remark}

We are ready to define the following.

\begin{definition}[TDRH-III]\label{definition-tdrh-III}
Let $(G,\mathcal H)$ be a proper pair. The group $G$ is said to be {\em hyperbolic relative to $\mathcal H$} if $\hat{X}$ is a hyperbolic graph and it satisfies the bounded penetration property (BPP).
\end{definition}

For the definition of BPP, one is referred to \cite{farb-relhyp}. 
\subsection{Equivalence of the definitions}
We start here by noting the following:
\begin{lemma}\label{lemma-example-of-coned-off-cayley-graph}
    Let $(G,\mathcal H)$ be a proper pair as in Subsection \ref{subsection-tdrh-II}. Then, the coned-off Cayley-Abels graph of $G$ with respect to $\mathcal H$ is a Cayley-Abels graph of $G$ with respect to $\mathcal H$.
\end{lemma}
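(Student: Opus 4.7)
The plan is to directly verify each of the four defining conditions of Definition \ref{definition-cayley-abels-graph-with-resepct-to} for $\hat{X}$, together with connectedness, simpliciality, and a cocompact, discrete $G$-action. The $G$-action on $\hat{X}$ is the natural extension of the action on $X$: on the original vertices $gU\in V(X)$, $G$ acts by left multiplication, and on a cone point I set $g\cdot v(Y_{i,t}):=v(Y_{i,gt})$. Since the subgraph $Y_{i,t}$ depends only on the coset $tH_i$, the cone point $v(Y_{i,t})$ is canonically indexed by the coset $tH_i$, and the formula $g\cdot v(tH_i)=v(gtH_i)$ gives a well-defined action extending the $G$-action on $X$. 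The graph $\hat{X}$ is connected because $X$ is connected and each cone point is joined by an edge to $X$, and it is simplicial by construction since cone edges attach new vertices to existing ones by single edges.

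Next I would check the stabilizer conditions. The stabilizer of $gU$ is $gUg^{-1}$, which is compact open. The stabilizer of the cone point $v(tH_i)$ is $tH_it^{-1}$, an open subgroup conjugate to $H_i\in\mathcal{H}$; in particular pointwise stabilizers of vertices are open, so the action is discrete in the sense of Arora--Pedroza. For the edges, those already in $X$ have compact stabilizers by hypothesis, and the stabilizer of a cone edge joining $v(tH_i)$ to $thU$ is $tH_it^{-1}\cap thUh^{-1}t^{-1}$, which is contained in the compact group $thUh^{-1}t^{-1}$ and therefore compact. Taking $t$ to be the identity transversal element shows that each $H_i\in\mathcal{H}$ is the $G$-stabilizer of the cone point $v(H_i)$, giving condition $(3)$.

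For cocompactness, the original vertices of $X$ form finitely many $G$-orbits (because $X$ is a Cayley--Abels graph of $G$), the cone vertices split into exactly $n$ orbits indexed by $\mathcal{H}$, the edges of $X$ form finitely many $G$-orbits, and the cone edges form $n$ orbits: an edge at $v(H_i)$ is of the form $(v(H_i), hU)$ with $h\in H_i$, and $H_i$ acts transitively on $\{hU:h\in H_i\}=V(Y_i)$ since $Y_i$ is a Cayley--Abels graph of $H_i$. This gives finitely many orbits of cells.

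Finally I would verify condition $(4)$. The only vertices with non-compact stabilizer are the cone points, since original vertex stabilizers $gUg^{-1}$ are compact. Suppose $v(tH_i)$ and $v(t'H_j)$ have the same non-compact stabilizer, i.e.\ $tH_it^{-1}=t'H_jt'^{-1}$. The proper pair hypothesis on $(G,\mathcal{H})$ forbids distinct non-compact members of $\mathcal{H}$ from being conjugate, so $i=j$; then $g:=t't^{-1}$ satisfies $g\cdot v(tH_i)=v(gtH_i)=v(t'H_i)$, and the two cone points lie in the same $G$-orbit. The main subtlety here is simply invoking the proper pair hypothesis at exactly this point; otherwise the verification is a routine bookkeeping of stabilizers and orbits, and no individual step is a serious obstacle.
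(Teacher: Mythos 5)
Your proof is correct and follows essentially the same route as the paper's: a direct verification of the conditions of Definition \ref{definition-cayley-abels-graph-with-resepct-to} for the natural $G$-action on $\hat{X}$, just spelled out in more detail. In fact your treatment of condition $(4)$ is more careful than the paper's one-line claim that no two vertices of $\hat{X}$ share a stabilizer (which could fail if $N_G(H_i)$ properly contains $H_i$), since you instead show directly that cone points with a common non-compact stabilizer lie in the same $G$-orbit.
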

\begin{proof}
    Let $X$ be the Cayley-Abels graph of $G$. The natural action of $G$ on $X$ induces a natural action of $G$ on $\hat{X}$. Since the action of $G$ on $X$ is cocompact and the set of cone points has finitely many orbits, the action of $G$ on $\hat{X}$ is also cocompact. Clearly, the edge $G$-stabilizers are compact. Note that the $G$-stabilizers of the cone points are conjugate to elements in $\mathcal H$. Thus, the conditions (2) and (3) of Definition \ref{definition-cayley-abels-graph-with-resepct-to} hold. Since there are no two vertices of $\hat{X}$ with the same $G$-stabilizer, the last condition also holds. Hence, we are done.
\end{proof}

We now show the equivalence of all three definitions of relative hyperbolicity for compactly generated groups.

\begin{theorem}\label{theorem-rh-equivalence}
Let G be a compactly generated TDLC group, and $\mathcal H=\{H_1,\dots, H_n\}$ be a finite collection of compactly generated open subgroups of $G$. Then, the following are equivalent:
\begin{enumerate}
    \item[{$(1)$}] TDRH-I.
    \item[{$(2)$}] TDRH-II.
    \item[{$(3)$}] TDRH-III.
\end{enumerate}
\end{theorem}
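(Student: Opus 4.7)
The plan is to establish all three equivalences by reducing to the corresponding equivalences that are already known for discrete groups, verifying that the classical arguments go through in the TDLC framework because the graphs involved --- $X$, $\hat{X}$, and $X^h$ --- are all locally finite.

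First I would prove (1) $\Leftrightarrow$ (3). The key bridge is Lemma \ref{lemma-example-of-coned-off-cayley-graph}, which shows that the coned-off graph $\hat{X}$ is itself a Cayley-Abels graph of $G$ with respect to $\mathcal{H}$ in the sense of Definition \ref{definition-cayley-abels-graph-with-resepct-to}. By Proposition \ref{proposition-properties-rel-cayley-graph}(2), $\hat{X}$ is quasiisometric to any other such graph, and $\hat{X}$ is fine if and only if that other graph is. Therefore TDRH-I is equivalent to the statement that $\hat{X}$ is a fine hyperbolic graph. It then remains to show that, for a coned-off graph of the form $\hat{X}$, fineness together with hyperbolicity is equivalent to hyperbolicity together with Farb's bounded penetration property. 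This is a purely combinatorial/geometric equivalence for graphs carrying a cocompact action with compact open cell stabilizers, and it is essentially the content of Bowditch's Theorem 7.10 in \cite{bowditch-relhyp} together with its converse. The converse direction is the simpler observation that BPP combined with cocompactness rules out arbitrarily many distinct embedded paths of bounded length between a fixed pair of vertices, because such a family would force the associated lifts to penetrate a common peripheral coset deeply and exit at distinct points arbitrarily far apart in $X$.

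Next I would prove (2) $\Leftrightarrow$ (3). This is the TDLC analogue of Groves-Manning's comparison in \cite{groves-manning}, specifically their Theorem 3.25. The two graphs $X^h$ and $\hat{X}$ are obtained from $X$ by performing two different modifications on the same subgraphs $Y_{i,t}$: in $X^h$ we attach a combinatorial horoball, while in $\hat{X}$ we add a single cone point. The classical argument trades a geodesic of $\hat{X}$ passing through a cone vertex for a uniform quasigeodesic in $X^h$ that descends into and climbs back out of the corresponding horoball, using that the horoball distance between two points on a horosphere is logarithmic in their horosphere distance. The converse uses stability of quasigeodesics in the hyperbolic space $X^h$ to extract BPP for $\hat{X}$. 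Since this argument is geometric, invoking only the locally finite structure of $X$, $\hat{X}$, $X^h$, and the $Y_{i,t}$, and since Propositions \ref{well-definedness-augmented-cayley-graph} and \ref{propsition-welldefined-ferb} together with Lemma \ref{lemma-proper-embedding-subgraph} guarantee that these constructions are canonical up to quasiisometry, it transfers verbatim to our setting.

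The main obstacle is bookkeeping rather than new ideas: one must verify that no step in the quoted discrete proofs exploits finiteness or discreteness of the group in a way that is not already captured by the locally finite geometry of the Cayley-Abels graph and the proper embedding of the peripheral subgraphs $Y_{i,t}$ from Lemma \ref{lemma-proper-embedding-subgraph}. Once this is checked, the action of $G$ on $X^h$ and on $\hat{X}$ is cocompact with compact open cell stabilizers, which is precisely the hypothesis used in \cite{bowditch-relhyp}, \cite{farb-relhyp}, and \cite{groves-manning}; the three equivalences then follow, and one deduces (1) $\Leftrightarrow$ (2) by transitivity.
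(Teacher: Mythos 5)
Your proposal is correct and follows essentially the same route as the paper: both hinge on Lemma \ref{lemma-example-of-coned-off-cayley-graph} identifying $\hat{X}$ as a Cayley-Abels graph of $G$ relative to $\mathcal H$ (so that, via Proposition \ref{proposition-properties-rel-cayley-graph}, TDRH-I reduces to the equivalence of ``fine and hyperbolic'' with ``hyperbolic with BPP'' for $\hat{X}$), and both transfer the cusped-space versus coned-off-space comparison from the discrete literature using only the locally finite geometry of $X$, $\hat X$, $X^h$ and the proper embeddings of the $Y_{i,t}$. The only difference is bibliographic: where you invoke Bowditch and Groves--Manning, the paper cites Dahmani's thesis for the fine/BPP equivalence and Sisto's metric formulation of relative hyperbolicity for TDRH-II $\Leftrightarrow$ TDRH-III, which is the same geometric content.
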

\begin{proof}

    [TDRH-II$\iff$TDRH-III] If the pair $(G,\mathcal H)$ is TDRH-II relatively hyperbolic, then $X$ is hyperbolic relative to $\mathcal Y=\{Y_{i,t}:t\in T_i \text{ for all i }\}$ in the sense of Definition 3.4 of \cite{sisto-metric-relative-hyperbolicity}. Then, by \cite[Theorem 1.1]{sisto-metric-relative-hyperbolicity}, $X$ is hyperbolic relative to $\mathcal Y=\{Y_{i,t}:t\in T_i \text{ for all $i$ and $1\leq i\leq n$}\}$ in the sense of Definition 3.8 of \cite{sisto-metric-relative-hyperbolicity}. Since these two definitions are equivalent, we are done.

    [TDRH-III$\implies$ TDRH-I] Suppose $(G,\mathcal H)$ is TDRH-III relatively hyperbolic. Then, $\hat{X}$ is hyperbolic and satisfies the BPP property. Since by Lemma \ref{lemma-example-of-coned-off-cayley-graph}, $\hat{X}$ is a Cayley-Abels graph with respect to $\mathcal H$. Thus, to check TDRH-III, it remains to check that $\hat{X}$ is a fine graph. Since $X$ is locally finite and $\hat{X}$ satisfies the BPP condition, it follows from the proof of \cite[Proposition 1, p.81]{dahmani-thesis} that $\hat{X}$ is a fine graph.

    [TDRH-I$\implies$ TDRH-III] Suppose $(G,\mathcal H)$ is TDRH-I relatively hyperbolic. Then, there exists a Cayley-Abels graph with respect to $\mathcal H$ such that it is hyperbolic and fine. Since $\hat{X}$ is a Cayley-Abels graph with respect to $\mathcal H$, by Proposition \ref{proposition-properties-rel-cayley-graph}, $\hat{X}$ is hyperbolic and fine. To prove TDRH-III, it remains to check that $\hat{X}$ satisfies the BPP property. This is the content of \cite[Lemma 5, p.82]{dahmani-thesis}. Hence, we are done.
\end{proof}
\vfill
We now provide a proof of Theorem \ref{Theorem-rel-hyp-intersection-is-compact}.
\vspace{.2cm}

Proof of Theorem \ref{Theorem-rel-hyp-intersection-is-compact}:
  Since $G$ is relatively hyperbolic with respect to $\mathcal H $, by TDRH-I, there exists a Cayley-Abels graph of $G$ with respect to $\mathcal H $ which is hyperbolic and fine. Since the coned-off Cayley-Abels graph $\hat{X}$ of $G$ is quasiisometric to the Cayley-Abels graph of $G$ with respect to $\mathcal H$, $\hat{X}$ is a fine graph Proposition \ref{proposition-properties-rel-cayley-graph}. Without loss of generality, we can choose $U$ a compact open subgroup such that $U\subset\bigcap_{i=1}^nH_i$.

  For $(1)$, we show that the index of $U$ in $H_i\cap H_j$ is finite. Suppose not. Then, the Cayley-Abels graphs $Y_i$ and $Y_j$ have infinitely many common vertices. We enumerate these vertices as $\{h_k\}$. Let $v_i$ and $v_j$ denote the cone points corresponding to $Y_i$ and $Y_j$ in $\hat{X}$. By definition of $\hat{X}$, each $h_k$ is joined to $v_i$ and $v_j$ by an edge of length $1$. Now, it is clear that there are infinitely many simple loops of length $4$ in $\hat{X}$ containing the edge from $v_i$ to $h_1$. This gives a contradiction to the fineness of $\hat{X}$. Hence $[H_i\cap H_j:U]<\infty.$ Since $U$ is compact, $H_i\cap H_j$ is compact.

  For $(2)$ we show that $gUg^{-1}\cap U$ has finite index in $gH_ig^{-1}\cap H_j$. Suppose not. Since $gUg^{-1}\cap U$ has finite index in $gUg^{-1}$ as well as in $U$, so $gUg^{-1}\cap U$ has infinitely many distinct cosets representatives in $gH_ig^{-1}\cap H_j$ lying outside $gUg^{-1}\cap U$. These representatives also give coset representatives of $U$ in $gH_ig^{-1}\cap H_j$. Thus, $gY_i$ and $Y_j$ have infinitely many common vertices. By the same logic as in (1), we get a contradiction. This proves $(2)$.

  Let $X$ be a Cayley-Abels graph of $G$ and let $Y_i$ be a Cayley-Abels graph of $H_i$ such that $Y_i$ is embedded in $X_i$. Since $(G,\mathcal H)$ is relatively hyperbolic, $X$ is hyperbolic relative to $\{Y_i\}$. As each $H_i$ is hyperbolic, $Y_i$ is a hyperbolic graph. Thus, by \cite[Theorem 1]{hamnestadtrelative}, $X$ is hyperbolic. Thus, $G$ is a hyperbolic TDLC group. This proves $(3).$
\qed
\section{Bowditch boundaries of relatively hyperbolic amalgamated free products and HNN extensions of TDLC groups}\label{section-bowditch-boundaries-rel-hyp-amalgam-hnn}

Throughout this section, for relatively hyperbolic groups, we use Definition \ref{definition-tdrh-II}. The main goal of this section is to prove that the topology of the Bowditch boundary of a relatively hyperbolic amalgamated free product or HNN extension is uniquely determined by the topology of the Bowditch boundary of the vertex groups. For that, first of all, we give a construction of the Bowditch boundary for these groups, and then, we use the dense amalgam technique from \cite{swiatkowski-dense-amalgam} to conclude the result.

\subsection{Amalgamated free product case.}\label{bowditch-boudary-construction-amalgam} We start here by describing our setup.
Let $A$ and $B$ be two compactly generated TDLC groups. Suppose $A$ is hyperbolic relative to $\mathcal H_A=\{H_i^A\}_{i=1}^n$ and $B$ is hyperbolic relative to $\mathcal H_B=\{H_j^B\}_{j=1}^m$. Let $C_A\subset A$ and $C_B\subset B$ be two compact open subgroups. Now, form an amalgamated free product $A\ast_CB$ where $C$ is topologically isomorphic to $C_A$ and $C_B$. Let $X_A$ be a Cayley-Abels graph of $A$ whose vertices are the left cosets of $C_A$ in $A$. Similarly, let $X_B$ be the Cayley-Abels graph of $B$ whose vertices are the left cosets of $C_B$ in $B$. To construct a model space for a Cayley-Abels graph for $G$, we follow the construction of a Cayley-Abels graph of $G$ with respect to $\mathcal H_A\cup\mathcal H_B$ as in Subsection \ref{subsection-rel-hyp-amalgam-combination-construction}. In that construction, we replace the copies of the Cayley-Abels graph of $A$ and $B$ with respect to $\mathcal H_A$ and $\mathcal H_B$, by the copies of $X_A$ and $X_B$. Thus, we get a graph $X$ and it is quasiisometric to a Cayley-Abels graph of $G$. 

\vspace{.2cm}
{\bf Tree of augmented spaces:} Let $X_A^h$ and $X_B^h$ be the augmented Cayley-Abels graphs of $A$ and $B$ with, respectively. Now, replace $X_A$ and $X_B$ by $X^h_A$ and $X^h_B$ in $X$. Since each vertex space of $X$ is either isomorphic to $gX_A$ or isomorphic to $gX_B$ for some $g\in G$, replace each vertex space by its corresponding augmented Cayley-Abels graph. Thus, we get a new graph $X^h$ and a projection map $X^h\to T$. There is a bijection between the edges of $X$ connecting different Cayley-Abels graphs and the edges of $T$. We call them {\em lifts} of the edges of $T$. The preimage of each vertex $v\in V(T)$, denoted by $X_v^h$, is isomorphic to the augmented Cayley-Abels graph of the stabilizer $G_v$ of $v$ in $G$. These are called the {\em vertex spaces} of $X^h$. Since $A$ and $B$ are hyperbolic relative to $\mathcal{H}_A$ and $\mathcal{H}_B$, the vertex spaces of $X^h$ are uniform Gromov hyperbolic geodesic metric spaces, i.e. there exists a $\delta\geq 0$ such that each vertex space is $\delta$-hyperbolic. Hence, $X^h$ is a Gromov hyperbolic space \cite{BF}. Thus, $G$ is hyperbolic relative to $\mathcal H_A\cup \mathcal H_B.$ This gives a different proof of Theorem \ref{rel-combination-theorem-amalgam} for compactly generated relatively hyperbolic TDLC groups.
\vspace{.2cm}

{\bf Construction of the compactification $\overline{X^h}$.} 

{\em Boundaries of the stabilizers.} Let $\delta_{Stab}(X^h)$ be the set $G\times(\partial_{rel}A\cup\partial_{rel}B)$ divided by the equivalence relation induced by
	$$(g_1,\xi_1)\sim (g_2,\xi_2) \text{ if } \xi_1,\xi_2\in \partial_{rel}A, g_2^{-1}g_1\in A \text{ and } g_2^{-1}g_1\xi_1=\xi_2,$$
	$$(g_1,\xi_1)\sim (g_2,\xi_2) \text{ if } \xi_1,\xi_2\in \partial_{rel}B, g_2^{-1}g_1\in B \text{ and } g_2^{-1}g_1\xi_1=\xi_2.$$
 The set $\delta_{Stab}(X^h)$ comes with a natural action of $G$ on the left. This also comes with a natural projection onto the set of vertices of $T$. The preimage of each vertex $v\in T$ is denoted by $\partial_{rel}G_v$.
	
Let $\partial T$ denote the Gromov boundary of $T$. Then, we define the {\em boundary} of $X^h$ as
	$$\delta(X^h):=\delta_{Stab}(X^h)\sqcup\partial T.$$
	
Also, we define a set $\overline{X^h}$ (which will be called the {\em compactification} of $X^h$) as $$\overline{X^h}:=X^h\cup\delta(X^h).$$
This set has a natural action of $G$ and a natural map $p:\overline{X^h}\to \overline{T}$, where $\overline{T}=T\cup\partial T$. The preimage of a vertex $v\in V(T)$ is $X^h_v\cup\partial_{rel}G_v$ that is identified as a set with $\overline{X^h_v}.$
\subsection{HNN extension case.}\label{bowditch-boundary-construction-hnn}
Let $A$ be a compactly generated TDLC group. Suppose $A$ is hyperbolic relative to $\mathcal H$. Let $C$ and $C'$ be two topologically isomorphic compact open subgroups of $A$ and let $G=A{\ast_C}$ be an HNN extension. Let $X_A$ be the Cayley graph of $A$ such that the set of vertices is the left cosets of $C$ in $A$. Again, to construct a model space for a Cayley-Abels graph for $G$, we follow the construction of a Cayley-Abels graph of $G$ with respect to $\mathcal H$ as in Subsection \ref{subsection-rel-hyp-hnn-combination-construction}. In that construction, we replace the copies of the Cayley-Abels graph of $A$ with respect to $\mathcal H$ by the copies of $X_A$. Thus, we get a graph $X$ and it is quasiisometric to a Cayley-Abels graph of $G$. 
\vspace{.2cm}

{\bf Tree of Augmented spaces:} Let $X_A^h$ be the augmented Cayley-Abels graph of $A$. We now replace $X_A$ by $X_A^h$ in $X$. Since each vertex space of $X$ is isomorphic to $gX_A$ for some $g\in G$, replace each vertex space by its corresponding augmented Cayley-Abels graph. Thus, we get a new graph $X^h$ and a projection map $X^h\to T.$ The preimage of each vertex $v\in T$, denoted by $X_v^h$, is isomorphic to the augmented Cayley-Abels graph of the stabilizer $G_v$ of $v$ in $G$. These are called the {\em vertex spaces} of $X^h$. There is a bijection between the edges of $X$ connecting different Cayley-Abels
graphs and the edges of $T$. We call them {\em lifts} of the edges of $T$. Since $A$ is hyperbolic relative to $\mathcal H$, each vertex space of $X^h$ is uniformly a hyperbolic space. Hence, $X^h$ is a hyperbolic space \cite{BF}. Thus, $G$ is hyperbolic relative to $\mathcal H$. This also gives a different proof of Theorem \ref{theorem-rel-combination-theorem-hnn}.
\vspace{.2cm}

{\bf Construction of the compactification $\overline{X^h}.$} Let $\delta_{Stab}(X)$ be the set $G\times\partial_{rel} A$ divided by the equivalence relation induced by
 $$(g_1,\xi_1)\sim (g_2,\xi_2) \text{ if } \xi_1,\xi_2\in \partial A, \hspace{0.07cm}g_2^{-1}g_1\in A \text{ and } g_2^{-1}g_1\xi_1=\xi_2,$$
 
 The set $\delta_{Stab}(X^h)$ comes with a natural action of $G$ on the left. This also comes with a natural projection onto the set of vertices of $T$. The preimage of each vertex $v\in T$ is denoted by $\partial_{rel} G_v$.
 
 Let $\partial T$ denote the Gromov boundary of $T$. Then, as a set, we define the {\em boundary} of $X$ as $$\delta(X^h):=\delta_{Stab}(X^h)\sqcup\partial T.$$
 
Finally, as a set, we define the {\em compactification} of $X$ as $$\overline{X^h}:=X^h\cup\delta(X^h).$$
 This set comes with a natural action of $G$ and with a natural map $p:\overline{X^h}\to \overline{T}$, where $\overline{T}=T\cup\partial T$. The preimage of a vertex $v\in T$ is $X^h_v\cup\partial_{rel} G_v$ that is identified, as a set, with $\overline{X^h_v}$, which is the compactification of $X_v$. 
	
\subsection{Topology on $\overline{X^h}$.} Let $\overline{X^h}$ be either as in Subsection \ref{bowditch-boudary-construction-amalgam} or as in Subsection \ref{bowditch-boundary-construction-hnn}. For a point $x\in X^h$, we set a basis of open neighborhoods of $x$ in $X^h$ to be a basis of open neighborhoods of $x$ in $\overline{X^h}$. Now, we define a basis of open neighborhoods for points of $\delta(X^h)$. Fix a vertex $v_0$ of $T$.
	
(1) Let $\xi\in\delta_{Stab}(X^h)$. Suppose $v$ is the vertex of $T$ such that $\xi\in\partial_{rel}G_v$. Let $U$ be an open neighborhood of $\xi$ in $\overline{X^h_v}$. Define $V_U$ to be the set of all $z\in\overline{X^h}$ such that $p(z)\neq v$, and the first edge of the geodesic in $T$ from $v$ to $p(z)$ lifts to an edge of $X^h$ that is glued to a point of $U$. Then we set $$V_U(\xi):=U\cup V_U.$$ A neighborhood basis of $\xi$ in $\overline{X^h}$ is a collection of set $V_U(\xi)$ where $U$ runs over some neighborhood basis of $\xi$ in $\overline{X^h_v}$.
	
(2) Let $\eta\in \partial T$. Let $T_n(\eta)$ be the subtree of $T$ consisting of those elements $x$ of $T$ for which the first $n$ edges of $[v_0,x]$ and $[v_0,\eta)$ are the same. Let $\partial T_n{(\eta)}$ denote the Gromov boundary of $T_n{(\eta)}$, and let $\overline{T_n(\eta)}=T_n(\eta)\cup\partial T_n(\eta)$. We define $$V_n(\eta)=p^{-1}(\overline{T_n(\eta)}).$$ We take the collection $\{V_n(\eta):n\geq 1\}$ as a basis of open neighborhoods of $\eta$ in $\overline{X^h}$.
	
We skip a verification that the above collections of sets satisfy the axioms for the basis of open neighborhoods. 
The rest of the subsection is devoted to showing the following.
\begin{proposition}
		The space $\delta(X^h)$ is compact metrizable and there is a $G$-equivariant homeomorphism from $\partial_{rel}(G)$ to $\delta(X^h)$.
	\end{proposition}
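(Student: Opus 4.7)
The plan is to show that the set $\delta(X^h)$, with the topology defined above, is naturally $G$-equivariantly homeomorphic to the Gromov boundary $\partial X^h$ of $X^h$, which by definition is $\partial_{rel}(G)$. Since each vertex space $X_v^h$ is a locally finite augmented Cayley-Abels graph by Remark \ref{remark-2}, and only finitely many lifts of tree edges meet each vertex, the graph $X^h$ is locally finite, hence proper; combined with the hyperbolicity of $X^h$ established via the Bestvina--Feighn combination argument recalled above, this forces $\partial X^h$ to be compact and metrizable. Once a continuous bijection $\Phi:\delta(X^h)\to\partial X^h$ is produced, compactness of $\partial X^h$ and Hausdorffness will promote $\Phi$ to a homeomorphism and transport compactness and metrizability to $\delta(X^h)$.

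I would construct $\Phi$ separately on the two pieces of $\delta(X^h)$. For $\xi\in\partial_{rel}G_v$ represented by a geodesic ray $\gamma$ in the vertex space $X_v^h$, set $\Phi(\xi)$ to be the limit in $\partial X^h$ of $\gamma$ viewed as a ray in $X^h$; this is well defined because, in a tree of hyperbolic spaces with compact edge groups, each vertex space is quasi-isometrically embedded. For $\eta\in\partial T$, fix the geodesic ray $[v_0,\eta)$ in $T$, lift each consecutive edge to the corresponding edge of $X^h$ joining vertex spaces, and concatenate these lifts with geodesic segments chosen inside each vertex space traversed; the resulting path is a uniform quasi-geodesic ray in $X^h$ by the hallways-flare argument of Bestvina--Feighn, and its endpoint in $\partial X^h$ depends only on $\eta$. $G$-equivariance is immediate from the construction.

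Bijectivity is checked by projecting any geodesic ray $\alpha$ of $X^h$ via $p$ onto $T$: if $p\circ\alpha$ is bounded in $T$, then $\alpha$ remains in a bounded neighborhood of a single $X_v^h$ and limits to a point of $\partial_{rel}G_v$; if $p\circ\alpha$ is unbounded, it determines a unique end of $T$ coinciding with $\Phi^{-1}$ of the endpoint of $\alpha$. Injectivity follows because asymptotic rays in $X^h$ project to rays at bounded Hausdorff distance in $T$, forcing them to originate from the same stratum of $\delta(X^h)$ and the same underlying boundary point of that stratum. The \textbf{main obstacle} is verifying that the neighborhood bases $\{V_U(\xi)\}$ and $\{V_n(\eta)\}$ given in the excerpt agree with shadow neighborhoods of $\Phi(\xi)$ and $\Phi(\eta)$ in $\partial X^h$: for stabilizer boundary points this reduces to the quasi-isometric embedding of $X_v^h$ into $X^h$ together with the fact that geodesics in $X^h$ leaving a vertex space must exit through a lift of an edge of $T$; for tree boundary points one must establish a uniform bottleneck property, namely that any geodesic in $X^h$ landing near $\Phi(\eta)$ shares a long initial prefix with the distinguished lifted ray, which follows from hyperbolicity of $X^h$ combined with the uniform constants controlling the lifted quasi-geodesics across successive vertex spaces.
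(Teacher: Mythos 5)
Your overall strategy coincides with the paper's: both identify $\partial_{rel}(G)$ with the Gromov boundary $\partial X^h$ of the proper hyperbolic tree of augmented spaces, produce a natural bijection with $\delta(X^h)$ from the structure of geodesic rays (rays trapped near a single vertex space versus rays with unbounded projection to $T$), and reduce everything to comparing the neighborhood bases $V_U(\xi)$, $V_n(\eta)$ with shadow neighborhoods in $\partial X^h$ --- which is precisely the step the paper delegates to Martin-\`{S}wi\polhk{a}tkowski.

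There is, however, one step in your plan that fails as written. You build $\Phi:\delta(X^h)\to\partial X^h$ and assert that once $\Phi$ is a continuous bijection, compactness of $\partial X^h$ and Hausdorffness promote it to a homeomorphism and ``transport compactness and metrizability to $\delta(X^h)$.'' The relevant topological fact requires the \emph{domain} to be compact and the \emph{codomain} to be Hausdorff; a continuous bijection \emph{onto} a compact Hausdorff space need not be a homeomorphism (consider the identity map from a strictly finer topology on the same set), and compactness of $\delta(X^h)$ is part of what you are trying to prove, so it cannot be assumed. The paper runs the map in the opposite direction: it shows the bijection $\partial_{rel}(G)\to\delta(X^h)$ is continuous, uses that $\delta(X^h)$ is Hausdorff (a point you should also verify explicitly from the given neighborhood bases), and then invokes compactness of $\partial_{rel}(G)\cong\partial X^h$. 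Your concluding paragraph on matching neighborhood bases would, if carried out in both directions, also repair the gap, since it amounts to proving bicontinuity directly; but as structured, one-directional continuity of $\Phi$ plus the compactness appeal does not close the proof.
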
 
{\em A word on the proof:} From the construction of $X^h$, it follows that there is a $G$-equivariant quasiisometry from the augmented Cayley-Abels graph of $G$ to $X^h$. Hence, $\partial_{rel} G$ is $G$-equivariantly homeomorphic to the Gromov boundary $\partial X^h$ of $X^h$. From the description of geodesic rays in $X^h$ as in \cite[p.7]{martin-swiat}, one can define a natural bijection from $\partial_{rel}G$ to $\delta(X^h)$. As $\partial_{rel}G$ is compact and $\delta(X^h)$ is Hausdorff, to show that this bijection is a homeomorphism, it is sufficient to show that it is continuous. This follows from \cite[Section 3, pp.280-281]{martin-swiat}.
\subsection{Homeomorphism types of the Bowditch boundaries of $A\ast_C B$ and $A\ast_C$}\label{subsection-homeo-type-rel-hyp-amalgam-hnn}

We start here by recalling a dense amalgam construction of compact metric spaces from \cite{swiatkowski-dense-amalgam}. Given a collection of non-empty compact metric spaces $X_1,\dots,X_k$, Jacek \`{S}wi\polhk{a}tkowski \cite{swiatkowski-dense-amalgam} introduced the notion of {\em dense amalgam} of $X_i$'s. To prove our main result of this section, we use the following characterization of dense amalgam.

\begin{theorem}\label{dense-amalgam-char}
    \cite[Theorem 0.2]{swiatkowski-dense-amalgam} For $k\geq 1$, let $X_1,\dots,X_k$ are non-empty compact metric spaces. Let $Y$ be a compact metric space equipped with a countable infinite family $\mathcal Y$ of subsets, partitioned as $$\mathcal Y=\mathcal Y_1 \sqcup\dots\sqcup \mathcal Y_k$$ such that the following hold:
    \begin{enumerate}
        \item The subsets in $\mathcal Y$ are pairwise disjoint and for each $i\in\{1,\dots,k\}$ the family $\mathcal Y_i$ consists of embedded copies of the space $X_i.$
        \item The family $\mathcal Y$ is {\em null},i.e. for any metric on $Y$ compatible with the topology the diameters of sets in $\mathcal Y$ converge to $0$.
        \item Each $Z\in\mathcal Y$ is a boundary subset of $Y$ (i.e. its complement is dense).
        \item For each $i$, the union of the family $\mathcal Y_i$ is dense in $Y$.
        \item Any two points of $Y$ which do not belong to the same subset of $\mathcal Y$ can be separated from each other by an open and closed subset $H\subset Y$ which is
$\mathcal Y$-saturated (i.e. such that any element of $\mathcal Y$ is either contained in or disjoint
with $H$).
\end{enumerate}
Then, $Y$ is homeomorphic to the dense amalgam $\widetilde{\sqcup}(X_1,\dots,X_k)$ of $X_i$'s.
\end{theorem}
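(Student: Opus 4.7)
The plan is to split the proof into two parts: first, verify that the dense amalgam $\widetilde{\sqcup}(X_1,\dots,X_k)$ itself satisfies properties (1)--(5), and second, prove a uniqueness statement, namely that any two compact metric spaces equipped with families satisfying these conditions are homeomorphic via a type-preserving homeomorphism. The first part is a direct check once one fixes an explicit model for $\widetilde{\sqcup}(X_1,\dots,X_k)$, say as a quotient of the boundary of a suitable Cantor-like tree with $X_i$-decorated leaves. The real content is the uniqueness part, which is where I would spend essentially all the effort.

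For uniqueness, let $Y$ and $Y'$ be two compact metric spaces satisfying (1)--(5) with the same ordered list $X_1,\dots,X_k$. The strategy is a back-and-forth construction producing a homeomorphism as the limit of compatible nested partitions. First, using condition (5) and compactness, I would produce for each $n$ a finite partition $\mathcal P_n$ of $Y$ into $\mathcal Y$-saturated clopen sets of diameter at most $1/n$, with $\mathcal P_{n+1}$ refining $\mathcal P_n$; this is possible since the separating clopen sets from (5) can be combined into a finite cover that refines into a partition, and saturation is preserved under finite Boolean operations. Construct $\mathcal P_n'$ on $Y'$ similarly. By the null condition (2), all but finitely many elements of $\mathcal Y$ are contained in a single piece of $\mathcal P_n$, and by the density condition (4) every piece meets (hence, by saturation, contains) infinitely many copies of each type $X_i$.

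Next, I would enumerate the elements of $\mathcal Y$ and $\mathcal Y'$ and build a back-and-forth matching $\sigma_n : \mathcal P_n \to \mathcal P'_n$, compatible with refinement, such that for each large copy $Z \in \mathcal Y_i$ encountered, the piece containing $Z$ is matched to a piece of $\mathcal P'_n$ containing a chosen copy in $\mathcal Y'_i$ of the same type. Density (4) of each $\bigcup \mathcal Y_i$ guarantees that at every stage a suitable target copy is available, so the back-and-forth never gets stuck. The resulting tower of matchings induces a canonical bijection from the ``skeleton'' $Y_\infty := Y \setminus \bigcup \mathcal Y$ to $Y'_\infty$; note that both skeleta are dense $G_\delta$ subsets, by (3) together with Baire category applied to the countable family $\mathcal Y$. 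On each matched pair of copies $Z \leftrightarrow Z'$ with $Z \in \mathcal Y_i$, $Z' \in \mathcal Y_i'$, I would simply fix an identification $Z \cong X_i \cong Z'$.

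The main obstacle, and the step that deserves the most care, is checking that the resulting bijection $Y \to Y'$ is continuous. The null condition (2) is the decisive hypothesis here: if a sequence in $Y$ converges to a point $y$, its images must converge to the image of $y$, and the only delicate case is a sequence $(y_m)$ with $y_m \in Z_m$ for distinct $Z_m \in \mathcal Y$ converging to some $y \in Y_\infty$. In that case, the refinement compatibility of $(\sigma_n)$ forces the matched $\sigma(Z_m) \in \mathcal Y'$ to lie in smaller and smaller pieces of $\mathcal P'_n$ around the image of $y$, and the null condition on $\mathcal Y'$ makes $\operatorname{diam}(\sigma(Z_m)) \to 0$, yielding convergence. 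Symmetry gives continuity of the inverse, so the bijection is the desired homeomorphism, and uniqueness is established.
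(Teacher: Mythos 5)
First, a contextual point: the paper does not prove this statement at all --- it is quoted as Theorem 0.2 of \'{S}wi\polhk{a}tkowski's paper on dense amalgams and used as a black box, so there is no internal proof to compare against. Your overall strategy (verify conditions (1)--(5) for an explicit model of $\widetilde{\sqcup}(X_1,\dots,X_k)$, then prove a uniqueness statement by a back-and-forth on nested $\mathcal Y$-saturated clopen partitions) is indeed the shape of the argument in the cited source, but as written your sketch has two genuine gaps.

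The first gap is that the partitions you posit cannot exist: a finite partition of $Y$ into $\mathcal Y$-saturated clopen sets of diameter at most $1/n$ is impossible as soon as some copy $Z\in\mathcal Y$ has diameter greater than $1/n$, because saturation forces $Z$ to lie inside a single piece, whose diameter is then at least $\operatorname{diam}(Z)$. Nullness only guarantees that all but finitely many copies are small, so at each stage finitely many ``large'' copies obstruct your construction. The correct formulation is that each piece is either of small diameter or is a small saturated clopen neighborhood of exactly one distinguished large copy; the back-and-forth must be organized around these distinguished pieces, and one must separately prove that every $Z\in\mathcal Y$ admits a neighborhood basis of saturated clopen sets (this uses (5) and compactness and is not automatic from what you wrote). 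The second, more serious gap is continuity at points of the copies themselves. You fix an arbitrary identification $Z\cong X_i\cong Z'$ on each matched pair and only verify continuity at points of the skeleton $Y\setminus\bigcup\mathcal Y$. But if $y\in Z$ and $y_m\to y$ with $y_m$ lying in small pieces $P_m$ disjoint from $Z$, your argument only shows that $f(y_m)$ accumulates on $Z'$, not that $f(y_m)\to f(y)$: for that, the matching of the small pieces clustering at $y$ must be coordinated with the chosen homeomorphism $Z\to Z'$, so that pieces accumulating at $y$ are matched to pieces accumulating at $f(y)$. This coordination between the identifications of the copies and the combinatorial matching of nearby pieces is the technical heart of the uniqueness proof and is entirely absent from your sketch; with an arbitrary, uncoordinated identification the glued bijection is in general discontinuous along $\bigcup\mathcal Y$.
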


\begin{definition}\cite{swiatkowski-dense-amalgam} Let $X_1,\dots,X_k$ be a collection of
nonempty metric spaces, for some $k\geq 1$. A compact metric space $Y$ is {\em $(X_1,\dots, X_k)$-regular} if it can be equipped with a family $\mathcal Y$ of subspaces satisfying
conditions $(1)-(5)$ from Theorem \ref{dense-amalgam-char}.
\end{definition}

From Theorem \ref{dense-amalgam-char}, it follows that if $Y$ is $(X_1,\dots,X_2)$-regular then $Y$ is homeomorphic to the dense amalgam $\widetilde{\sqcup}(X_1,\dots,X_k).$ We now adapt the dense amalgam construction in our setup.

Let $A\ast_C B$ be an amalgamated free product as in Subsection \ref{bowditch-boudary-construction-amalgam}. Let $\partial_{rel} A$ be the Bowditch boundary of $A$ and let $\partial_{rel} B$ be the Bowditch boundary of $B$. Let $\delta(X^h)$ be the Bowditch boundary of $A\ast_C B$ as constructed in Subsection\ref{bowditch-boudary-construction-amalgam}. We aim to show that $\delta(X^h)$ is homeomorphic to the dense amalgam $\widetilde{\sqcup}(\partial_{rel} A,\partial_{rel} B)$ of $\partial_{rel} A$ and $\partial_{rel} B$. To prove that it is sufficient to prove that $\delta(X^h)$ is $(\partial_{rel} A,\partial_{rel} B)$-regular. Towards this direction, we now describe a family $\mathcal Y$ of subspaces of $\delta(X^h)$. Define $$\mathcal Y_A=\{\partial_{rel} G_v: \pi(v)=v_A\},$$ $$\mathcal Y_B=\{\partial_{rel} G_v:\pi(v)=v_B\}.$$ 

\begin{proposition}\label{main-prop}
    The family $\mathcal Y$ of subspaces of $\delta(X^h)$ satisfies the conditions (1)-(5) of Theorem \ref{dense-amalgam-char}.
\end{proposition}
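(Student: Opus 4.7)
The plan is to verify each of the five conditions of Theorem~\ref{dense-amalgam-char} for the family $\mathcal Y = \mathcal Y_A \sqcup \mathcal Y_B$ inside the compact metric space $Y = \delta(X^h)$, using as the only tools the projection $p : \overline{X^h} \to \overline{T}$ and the explicit neighborhood bases $\{V_U(\xi)\}$ and $\{V_n(\eta)\}$ described above. Condition~(1) is immediate: for distinct $v, w \in V(T)$ the sets $\partial_{rel}G_v$ and $\partial_{rel}G_w$ are disjoint because $\delta_{Stab}(X^h)$ is defined as a disjoint union indexed by $V(T)$; each $G_v$ is a $G$-conjugate of $A$ or $B$, and left translation gives a homeomorphism $\partial_{rel}A \to \partial_{rel}G_v$ (respectively $\partial_{rel}B \to \partial_{rel}G_v$), which is an embedding into $\delta(X^h)$ by the definition of the basic neighborhoods $V_U(\xi)$.

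For~(2) I would argue by contradiction using compactness: if there were $\epsilon > 0$ and infinitely many $v_n \in V(T)$ with $\mathrm{diam}(\partial_{rel}G_{v_n}) \ge \epsilon$, choosing $\xi_n, \eta_n \in \partial_{rel}G_{v_n}$ with $d(\xi_n, \eta_n) \ge \epsilon$ and extracting convergent subsequences in $\delta(X^h)$ would yield limits $\xi_\infty \ne \eta_\infty$, but $p(\xi_n) = p(\eta_n) = v_n$ would force $p(\xi_\infty) = p(\eta_\infty)$ in the Hausdorff space $\overline{T}$ and thus $\xi_\infty = \eta_\infty \in \partial T$, a contradiction. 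For~(3), a basic neighborhood $V_U(\xi)$ of $\xi \in \partial_{rel}G_v$ contains $V_U$, which is non-empty because every neighborhood $U$ of the boundary point $\xi$ of $\overline{X^h_v}$ contains level-$0$ coset vertices of $X_v$ at which lifted $T$-edges are glued; these edges lead into neighbouring vertex spaces, so $V_U(\xi)$ meets the complement of $\partial_{rel}G_v$, giving empty interior. For~(4), density of each of $\bigcup \mathcal Y_A$ and $\bigcup \mathcal Y_B$ follows because any $V_n(\eta)$ meets vertex spaces over both $v_A$ and $v_B$ (every ray in $T$ alternates between the two types), and any $V_U(\xi)$ with $\xi \in \partial_{rel}G_v$ extends through a lifted $T$-edge into a vertex space of the opposite type.

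The main work is condition~(5). Given $\xi \ne \eta$ in $\delta(X^h)$ not lying in a common element of $\mathcal Y$, I would find an edge $e$ of $T$ whose removal produces two subtrees $T_1, T_2$ separating the $p$-images of $\xi$ and $\eta$: on $[v,w]$ when $\xi \in \partial_{rel}G_v$, $\eta \in \partial_{rel}G_w$; on a ray to $\eta$ when $\xi \in \partial_{rel}G_v, \eta \in \partial T$; on the biinfinite geodesic they determine when both are in $\partial T$. Then set
\[
H = \bigcup_{u \in V(T_1)} \bigl(X^h_u \cup \partial_{rel}G_u\bigr) \,\cup\, \partial T_1 \,\cup\, \{\text{lifted edges lying inside } T_1\} \,\cup\, \{\text{the } T_1\text{-half of the lift of } e\}.
\]
By construction $H$ contains $\xi$, misses $\eta$, and is $\mathcal Y$-saturated since each $\partial_{rel}G_u$ sits over a single vertex of $T$, hence is entirely inside $H$ or entirely in its complement. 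The delicate step is to confirm that $H$ is clopen: openness at interior points is clear from the basic neighborhoods of $X^h$, and openness at $\zeta \in \partial T_1$ follows because for $n$ large enough the whole $V_n(\zeta)$ sits over the subtree beyond an edge of $T_1$ and hence inside $H$, while the complement admits a symmetric description for $T_2$, yielding closedness.

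The main obstacle will be condition~(5), specifically the bookkeeping along the lift of the separating edge and the careful verification that the $V_n(\zeta)$-neighborhoods of points $\zeta \in \partial T_1$ do not leak back across $e$ into $T_2$; this ultimately reduces to the observation that $\zeta \in \partial T_1$ forces the entire tail of the ray $[v_0, \zeta)$ to lie in $T_1$, so the subtree $T_n(\zeta)$ from the definition of $V_n(\zeta)$ is contained in $T_1$ for $n$ large. Once~(5) is established the proposition follows, and Theorem~\ref{dense-amalgam-char} then identifies $\delta(X^h)$ with the dense amalgam $\widetilde{\sqcup}(\partial_{rel}A, \partial_{rel}B)$.
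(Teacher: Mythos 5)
Your verifications of conditions (1), (3), and (4) follow the paper's line, and your treatment of condition (5) --- building the $\mathcal Y$-saturated clopen set $H$ from the two halves of $T$ obtained by deleting a separating edge, rather than using the sets $V_n(\eta)$ directly as the paper does --- is a legitimate and arguably more uniform alternative (it handles all three configurations of the pair of points at once). The one caveat there is that openness of $H$ at a point $\zeta\in\partial_{rel}G_u$ when $u$ is the $T_1$-endpoint of the deleted edge $e$ requires choosing the neighborhood $U$ of $\zeta$ in $\overline{X^h_u}$ so as to avoid the single vertex of $X^h_u$ at which the lift of $e$ is glued; otherwise $V_U(\zeta)$ leaks across $e$ into $T_2$. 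This is easily arranged but should be said.

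The genuine gap is in your argument for condition (2). You extract $\xi_n,\eta_n\in\partial_{rel}G_{v_n}$ with $d(\xi_n,\eta_n)\ge\epsilon$, pass to limits $\xi_\infty\ne\eta_\infty$, and claim that $p(\xi_n)=p(\eta_n)=v_n$ forces $p(\xi_\infty)=p(\eta_\infty)\in\partial T$, whence $\xi_\infty=\eta_\infty$. This fails because the Bass--Serre tree $T$ is \emph{not} locally finite whenever a vertex group is non-compact (the degree of a vertex of type $gA$ is $[A:C]$), so an infinite sequence of distinct vertices $v_n$ need not leave every bounded subset of $T$: they may all be adjacent to a fixed vertex $u$. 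In that situation $p(\xi_n)$ does not converge in $\overline{T}=T\cup\partial T$ with its usual topology (so $p$ is not continuous on $\delta(X^h)$), and the actual limit $\xi_\infty$ lies in $\partial_{rel}G_u$, a fiber of $p$ over a \emph{vertex}, where $p(\xi_\infty)=p(\eta_\infty)$ gives no contradiction. The true mechanism for nullness is different: the gluing vertices of the edges joining $X^h_u$ to the $X^h_{v_n}$ are pairwise distinct vertices of the \emph{locally finite} graph $X^h_u$, hence escape every finite ball, so for any basic neighborhood $V_U(\zeta)$ all but finitely many of the sets $\partial_{rel}G_{v_n}$ are entirely contained in it. This is exactly what the paper's proof exploits, in the equivalent covering formulation of nullness: given a finite cover by basic neighborhoods $V_{U_1}(\xi_1),\dots,V_{U_l}(\xi_l),V_{n_1}(\eta_1),\dots,V_{n_m}(\eta_m)$, every member of $\mathcal Y$ other than $\partial_{rel}G_{p(\xi_1)},\dots,\partial_{rel}G_{p(\xi_l)}$ is \emph{wholly} contained in whichever basic set it meets, because membership of a point $z$ in $V_{U_i}(\xi_i)$ or $V_{n_j}(\eta_j)$ depends only on the first edge of the $T$-geodesic towards $p(z)$, hence only on the vertex $p(z)$ and not on $z$ itself. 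You should replace your compactness argument for (2) by this saturation argument (or by the escape-to-infinity argument in the locally finite vertex spaces); as written, the deduction does not go through.
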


\begin{proof}
    From the definition of the topology on $\delta(X^h)$, it follows that the topology on $\partial_{rel} G_v$ agrees with the subspace topology induced from $\delta(X^h)$ for all $v\in V(T).$ Thus, for all $v\in V(T)$, $\partial_{rel} G_v$ is embedded in $\delta(X^h).$ Clearly, elements in $\mathcal Y_A\cup \mathcal Y_B$ are disjoint. By Lemma \ref{lemma-countable-vertices-cayley-abels-graph}, the family $\mathcal Y$ is countable. This verifies condition (1). 

    To check condition (2), i.e. nullness of the family $\mathcal Y$, we need to show that for each open covering $\mathcal U$ of $\delta(X^h)$ there is a finite subfamily $\mathcal A \subset \mathcal Y$ such that for every $Z\in \mathcal Y \setminus \{\mathcal A\}$ there is $U\in \mathcal U$ that contains $Z$. Since $\delta(X^h)$ is compact, without loss of generality, we may assume that $\mathcal U$ is finite and consists of sets from bases of open neighborhoods of points. Let $$\mathcal U=\{V_{U_1}(\xi_1),\dots,V_{U_l}(\xi_l),V_{n_1}(\eta_1),\dots,V_{n_m}(\eta_m)\}.$$ We prove that the above property holds for $\mathcal U$ with $$\mathcal A=\{\partial_{rel} G_{p(\xi_1)},\dots,\partial_{rel} G_{p(\xi_l)}\}.$$ Let $Z\in\mathcal Y\setminus \{\mathcal A\}.$ Since $\mathcal U$ is a covering of $\delta(X^h)$, either $Z\cap V_{U_i}(\xi_i)\neq\phi$ or $Z\cap V_{n_j}(\eta_j)\neq \phi$ for some $1\leq i\leq l$ and $1\leq j\leq m$. Suppose $Z\cap V_{U_i}(\xi_i)\neq \phi.$ Since $Z\notin\mathcal A$, it follows from the definition of $V_{U_i}(\xi_i)$ that $Z\subset V_{U_i}(\xi_i).$ Suppose $Z\cap V_{n_j}(\eta_j)\neq\phi$. Then, again by definition of $V_{n_j}(\eta_j)$, $Z\subset V_{n_j}(\eta_j).$ This completes the verification of condition (2). 

    For condition (3), let $Z\in\mathcal Y_A$. Suppose $Z=\partial_{rel} A$. A similar computation can be done when $Z=\partial_{rel} G_v$ and $\pi(v)=v_A.$ We need to show that the complement of $Z$ in $\delta(X^h)$ is dense in $\delta(X^h)$. Let $\xi\in\partial_{rel} A$. Since $X^h_A$ is dense in $\overline{X^h_A}$ and vertices of $X^h_A$ are the cosets of $C$ in $A$, there exists a sequence of cosets $\{a_n C\}$  that converges to $\xi$. Note that the edges of $T$ adjacent to $v_A$ are nothing but the cosets of $C$ in $A$. Let $U$ be a neighborhood of $\xi$ in $\partial_{rel} A$. Thus, the neighborhood $V_U(\xi)$ of $\xi$ in $\delta(X^h)$ contains all those $\beta\in \delta(X^h)\setminus\{\partial_{rel} A\}$ such that the coset corresponding to the first edge from $v_A$ to $p(\beta)$ is some $a_nC$ and it belongs to $U$ (as an element of $X_A$). Since $\{a_nC\}$ converges to $\xi$, $V_U(\xi)\cap (\delta(X^h)\setminus\{\partial_{rel} A\})\neq\phi.$ A similar analysis can be done for $Z\in\mathcal Y_B.$ This completes the verification of condition (3).

For condition (4), we only verify that $\mathcal Y_A$ is dense in $\delta (X^h).$ A similar computation can be done for $\mathcal Y_B.$ Let $\xi\in\partial_{rel} G_v$ where $p(v)=v_B.$ We show that any neighborhood of $\xi$ in $\delta(X^h)$ intersects some element of $\mathcal Y_A$. By the discussion in condition (3), any neighborhood $U$ of $\xi$ in $\overline{X^h_v}$ contains at least one vertex of an edge $\Tilde{e}$ of $X^h$ which is not in any vertex space of $X^h$. Suppose $\Tilde{e}$ is the lift of the edge $e$ in $T$. Let $w$ be a vertex on the geodesic segment of $T$ starting from $v$ whose first edge is $e$ and $\pi(w)=v_A$. Then, from the definition of $V_U(\xi)$, it follows that $\partial_{rel} G_w\subset V_U(\xi)$. Observe that any neighborhood around a boundary point of $T$ contains copies of $\partial_{rel} A$ as well as copies of $\partial_{rel} B$.  This completes the verification of condition (4).

For condition (5), first of all, observe that a neighborhood $V_n(\eta)$ around a boundary point $\eta$ of $T$ is closed as well as open in $\delta (X^h)$. Also, $V_n(\eta)$ is $\mathcal Y$-saturated. Let $\xi_1,\xi_2$ be points of $\delta(X^h)$ which do not lie in the same subset of $\mathcal Y$. Let $\xi_1\in\partial_{rel} G_v$ and $\xi_2\in\partial_{rel} G_w$ and $v\neq w.$ 
Choose $\eta\in\partial T$ such that the geodesic ray $[v_0,\eta)$ does not pass through $v$ but $w\in [v_0,\eta).$ Let $n$ be the length of the geodesic segment $[v_0,\eta)\cap[v_0,w]$. Then, from definition of $V_{n-1}(\eta)$, we see that $V_{n-1}(\eta)$ contains $\xi_2$ but does not contain $\xi_1.$ This completes the verification of condition (5). 
\end{proof}
Now, we are ready to give a proof of Theorem \ref{theorem-rel-hyp-homeo-type-amal-case}. 
\vspace{.2cm}

Proof of Theorem \ref{theorem-rel-hyp-homeo-type-amal-case}: Let $T$ and $T'$ be the Bass-Serre trees of $A\ast_C B$ and $A'\ast_C B'$ respectively. Let $X^h$ and $X'^h$ be the augmented Cayley-Abels graphs of $G$ and $G'$, respectively, as constructed in Subsection \ref{bowditch-boudary-construction-amalgam}.
 We need to consider the following three cases:

{\bf Case 1.} Firstly, observe that if the Bowditch boundary of a relatively hyperbolic TDLC group $G$ is empty, then $G$ is compact. If $\partial_{rel} A$ and $\partial_{rel} B$ are both empty, then $\partial_{rel} A'$ and $\partial_{rel} B'$ are also empty. Thus, $A, B$ and $A', B'$ are all compact groups. This implies that $T$ and $T'$ are uniformly locally finite, and the Cayley-Abels graphs of $G$ and $G'$ are quasiisometric to $T$ and $T'$, respectively. Hence, $\partial_{rel} G$ and $\partial_{rel} G'$ are both homeomorphic to the Cantor set.

{\bf Case 2.} Suppose $\partial_{rel} A\neq \phi$ and $\partial_{rel} B=\phi.$ Then, $\partial_{rel} A'\neq \phi$ and $\partial_{rel} B'=\phi$. By Proposition \ref{main-prop}, $\partial_{rel} G$ and $\partial_{rel} G'$ are homeomorphic to $\widetilde{\sqcup}(\partial_{rel} A)$ and $\widetilde{\sqcup}(\partial_{rel} A')$. Since $\partial_{rel} A$ is homeomorphic to $\partial_{rel} A'$, by \cite[Proposition 3.1]{swiatkowski-dense-amalgam}, $\widetilde{\sqcup}(\partial_{rel} A)$ is homeomorphic to $\widetilde{\sqcup}(\partial_{rel} A')$. 

{\bf Case 3.} Suppose $\partial_{rel} A\neq \phi$ and $\partial_{rel} B\neq \phi$. By Proposition \ref{main-prop}, $\partial_{rel} G$ and $\partial_{rel} G'$ are homeomorphic to $\widetilde{\sqcup}(\partial_{rel} A,\partial_{rel} B)$ and $\widetilde{\sqcup}(\partial_{rel} A',\partial_{rel} B')$. Now, the proof follows from \cite[Proposition 3.1]{swiatkowski-dense-amalgam}.      \qed

\begin{remark}\label{remark-homeo-amalgam}
    Suppose $G$ and $G'$ are as in Theorem \ref{theorem-rel-hyp-homeo-type-amal-case}. If the index of $C$ in $A$ and $B$ are equal to $2$, and the index of $C'$ in $A'$ or $B'$ is at least $3$, then $\partial_{rel} G$ is not homeomorphic to $\partial_{rel} G'$ since $G$ has two rough ends and $G'$ has infinitely many rough ends.
\end{remark}
Given a topological group $A$ and an open subgroup $C$ of $A$, the group $A\ast_C A$ denotes the double of $A$ along $C$.

\begin{corollary}
    Suppose $G=A\ast_C B$ where $A$ is non-compact and hyperbolic relative to $\mathcal H_A$, and $B$ is compact. Then, the Bowditch boundary of $G$ with respect to $\mathcal H_A$ is homeomorphic to the Bowditch boundary of $H=A\ast_C A$ with respect to $\mathcal H_A\sqcup\mathcal H_A$ (By Theorem \ref{rel-combination-theorem-amalgam}, $G$ and $H$ are hyperbolic relative to $\mathcal H_A$ and $\mathcal H_A\sqcup\mathcal H_A$, respectively).
\end{corollary}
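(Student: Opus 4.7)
The plan is to recognize both boundaries as dense amalgams of copies of $\partial_{rel}A$ and then collapse a two-family amalgam to a one-family amalgam. First I would check the hypotheses needed to invoke the constructions of Subsection~\ref{bowditch-boudary-construction-amalgam}: since $A$ is non-compact and $C$ is a compact open subgroup of $A$, the index $[A:C]$ is infinite, so the Bass--Serre trees of $G$ and $H$ both have infinite valence at every $A$-vertex; in particular, $G$ and $H$ have infinitely many rough ends. Relative hyperbolicity of $G$ with respect to $\mathcal H_A$ and of $H$ with respect to $\mathcal H_A\sqcup\mathcal H_A$ follows from Theorem~\ref{rel-combination-theorem-amalgam}, and the Bowditch boundaries are modeled by the compactifications $\delta(X^h)$ built in Subsection~\ref{bowditch-boudary-construction-amalgam}.

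Next I would identify each Bowditch boundary with a dense amalgam. Since $B$ is compact, $\partial_{rel}B=\emptyset$, so the $\mathcal Y_B$ family appearing in Proposition~\ref{main-prop} is vacuous and the argument in Case~2 of the proof of Theorem~\ref{theorem-rel-hyp-homeo-type-amal-case} gives
\[
\partial_{rel}G\;\cong\;\widetilde{\sqcup}(\partial_{rel}A).
\]
For $H=A\ast_C A$, both vertex groups have boundary $\partial_{rel}A$, which is non-empty because $A$ is non-compact, so Case~3 of the same proof (via Proposition~\ref{main-prop}) yields
\[
\partial_{rel}H\;\cong\;\widetilde{\sqcup}(\partial_{rel}A,\partial_{rel}A).
\]

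It remains to establish the homeomorphism $\widetilde{\sqcup}(\partial_{rel}A,\partial_{rel}A)\cong \widetilde{\sqcup}(\partial_{rel}A)$, and I would do this directly from Theorem~\ref{dense-amalgam-char}. The space $Y=\widetilde{\sqcup}(\partial_{rel}A,\partial_{rel}A)$ comes equipped with a partitioned countable family $\mathcal Y=\mathcal Y_1\sqcup\mathcal Y_2$ of embedded copies of $\partial_{rel}A$ satisfying conditions $(1)$--$(5)$. Forgetting the partition and regarding $\mathcal Y_1\cup\mathcal Y_2$ as a \emph{single} countable family of copies of $\partial_{rel}A$ preserves pairwise disjointness and nullness, each member is still a boundary subset, and density of the union is inherited from density of $\mathcal Y_1$ alone. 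Hence $Y$ is $(\partial_{rel}A)$-regular, so $Y\cong\widetilde{\sqcup}(\partial_{rel}A)$ by Theorem~\ref{dense-amalgam-char}, completing the proof.

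The one step that requires more than bookkeeping is the verification of condition~$(5)$ after forgetting the partition: any two points of $Y$ lying in distinct subsets of the coarser family $\mathcal Y_1\cup\mathcal Y_2$ already lie in distinct subsets of the finer partition $\mathcal Y_1\sqcup\mathcal Y_2$, and a $\mathcal Y$-saturated clopen separator for the finer partition is a fortiori saturated for the coarser one, so the argument goes through without issue; this will be the main — but mild — obstacle in the writeup.
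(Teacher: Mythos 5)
Your proposal is correct and follows essentially the same route as the paper: both identify $\partial_{rel}G\cong\widetilde{\sqcup}(\partial_{rel}A)$ and $\partial_{rel}H\cong\widetilde{\sqcup}(\partial_{rel}A,\partial_{rel}A)$ via Cases 2 and 3 of the proof of Theorem \ref{theorem-rel-hyp-homeo-type-amal-case} (using Proposition \ref{main-prop}), and then reduce to the homeomorphism $\widetilde{\sqcup}(\partial_{rel}A)\cong\widetilde{\sqcup}(\partial_{rel}A,\partial_{rel}A)$. The only difference is that the paper simply cites \`{S}wi\polhk{a}tkowski's Proposition 0.1 for that last identification, whereas you reprove this special case directly from Theorem \ref{dense-amalgam-char} by forgetting the partition of the family $\mathcal Y$ --- a correct and self-contained substitute, and your preliminary observation that $[A:C]=\infty$ (hence infinitely many rough ends) is a legitimate verification of a hypothesis the paper leaves implicit.
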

\begin{proof}
    From Case 2. of the proof of Theorem \ref{theorem-rel-hyp-homeo-type-amal-case}, we see that $\partial_{rel} G$ is homoeomorphic to $\widetilde{\sqcup}(\partial_{rel} A)$. However, by \cite[0.1 Proposition]{swiatkowski-dense-amalgam}, $\widetilde{\sqcup}(\partial_{rel} A)$ is homeomorphic to $\widetilde{\sqcup}(\partial_{rel} A,\partial_{rel} A)$. But, $\partial_{rel} H$ is also homeomorphic to $\widetilde{\sqcup}(\partial_{rel} A,\partial_{rel} A)$. This completes the proof.
\end{proof}
Now, we discuss the homomorphism types of the Bowditch boundaries of HNN extensions of relatively hyperbolic groups and a proof of Theorem \ref{theorem-rel-hyp-homeo-type-hnn}.
\vspace{.2cm}

Proof of Theorem \ref{theorem-rel-hyp-homeo-type-hnn}: Again, to prove the theorem, we use the dense amalgam technique of compact metric spaces. Let $\partial_{rel} A$ and $\partial_{rel} A'$ denote the Bowditch boundaries of $A$ and $A'$ with respect to $\mathcal H_A$ and $\mathcal H_A'$, respectively. There are two cases to be considered:
    
    {\bf Case 1.} Suppose $\partial_{rel} A=\phi$. Then, $\partial_{rel} A'=\phi$, and hence $A$ and $A'$ are both compact groups. Then, Cayley-Abels graphs of $G$ and $G'$ are both quasiisometric to Bass-Serre trees of $G$ and $G'$, respectively. Since, in this case, Bass-Serre trees are uniformly locally finite, hence $\partial_{rel} G$ and $\partial_{rel} G'$ are homeomorphic to the Cantor sets.
    
    {\bf Case 2.} Suppose $\partial_{rel} A\neq \phi$. Then, $\partial_{rel} A'$ is also non-empty. Using \cite[Proposition 3.1]{swiatkowski-dense-amalgam}, it is sufficient to prove that $\partial_{rel} G$ is homeomorphic to $\widetilde{\sqcup}(\partial_{rel} A)$ and $\partial_{rel} G'$ is homeomorphic to $\widetilde{\sqcup}(\partial_{rel} A').$  The rest of the proof is devoted to proving this. Let $\delta(X^h)$ and $\delta(X'^h)$ be the Bowditch boundaries of $G$ and $G'$, respectively, as constructed in Subsection \ref{bowditch-boundary-construction-hnn}. We show that $\delta(X^h)$ and $\delta(X'^h)$ are $(\partial_{rel} A)$-regular and $(\partial_{rel} A')$-regular, respectively. We define a family of subspaces $\mathcal Y_A$ and $\mathcal Y_{A'}$ of $\delta(X^h)$ and $\delta(X'^h)$. $$\mathcal Y_A=\delta_{Stab}(X^h)$$ $$\mathcal Y_{A'}=\delta_{Stab}(X'^h).$$ As in the proof of Proposition \ref{main-prop}, $\mathcal Y_A$ and $\mathcal Y_{A'}$ satisfy the conditions (1)-(5) of Theorem \ref{dense-amalgam-char}. Hence, we are done. \qed
    
Let $(\mathcal G,\mathcal Z)$ be a graph of relatively hyperbolic TDLC groups with compact edge groups over a finite graph $\mathcal Z$. We denote by $h(\mathcal G,\mathcal Z)$ the homeomorphism types (without multiplicity) of non-empty Bowditch boundaries of the vertex groups of $(\mathcal G,\mathcal Z).$ Using induction and combining Theorem \ref{theorem-rel-hyp-homeo-type-amal-case} and Theorem \ref{theorem-rel-hyp-homeo-type-hnn}, we have the following:

\begin{theorem}\label{theorem-rel-hyp-homeo-type-general}
Let $(\mathcal G,\mathcal Z_1)$ and $(\mathcal G,\mathcal Z_2)$ be two graphs of groups over two finite graphs $\mathcal Z_1$ and $\mathcal Z_2$ with fundamental group $G_1$ and $G_2$ such that the following hold:
\begin{enumerate}
    \item[{$(i)$}] Each vertex group of $(\mathcal G,\mathcal Z_1)$ and $(\mathcal G,\mathcal Z_2)$ is relatively hyperbolic.
    \item[{$(ii)$}] Each edge group of $(\mathcal G,\mathcal Z_1)$ and $(\mathcal G,\mathcal Z_2)$ is compact in the adjacent vertex groups.
    \item [{$(iii)$}] The rough ends of $G$ and $G'$ are infinite.
\end{enumerate}
    If $h(\mathcal G,\mathcal Z_1)=h(\mathcal G,\mathcal Z_2)$, then the Bowditch boundary of $G_1$ is homeomorphic to the Bowditch boundary of $G_2.$
\end{theorem}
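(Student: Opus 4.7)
The plan is to extend the construction and argument of Subsections \ref{bowditch-boudary-construction-amalgam}--\ref{subsection-homeo-type-rel-hyp-amalgam-hnn} from a single amalgam or HNN extension to an arbitrary finite graph of groups. The target statement is that, under the given hypotheses, the Bowditch boundary of $G_i$ is homeomorphic to the dense amalgam of the distinct non-empty homeomorphism types appearing in $h(\mathcal{G}, \mathcal{Z}_i)$. Combined with \cite[Proposition 3.1]{swiatkowski-dense-amalgam}, which asserts that the dense amalgam depends only on the set of homeomorphism types of its inputs (not on their multiplicities), this yields the theorem immediately.

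First, I would construct a tree of augmented spaces $X^h \to T$, where $T$ is the Bass-Serre tree of $(\mathcal{G}, \mathcal{Z}_1)$, each vertex space $X^h_w$ is the augmented Cayley-Abels graph of the vertex stabilizer $G_w$, and each edge of $T$ lifts to a single edge of $X^h$ joining the coset vertices of the edge-group compactum in the two adjacent vertex spaces. This is the evident common generalization of the constructions in Subsections \ref{bowditch-boudary-construction-amalgam} and \ref{bowditch-boundary-construction-hnn}. By the Bestvina-Feighn combination theorem applied to this tree of uniformly hyperbolic vertex spaces (from hypothesis $(i)$ and Theorem \ref{theorem-rel-hyp-combination-general-case}), $X^h$ is Gromov hyperbolic, and its Gromov boundary $\delta(X^h) = \delta_{\mathrm{Stab}}(X^h) \sqcup \partial T$ is $G_1$-equivariantly homeomorphic to the Bowditch boundary of $G_1$. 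I would topologize $\delta(X^h)$ using the obvious analogs of the neighborhood bases $V_U(\xi)$ and $V_n(\eta)$ defined earlier.

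Next, for each $v \in V(\mathcal{Z}_1)$ with $\partial_{rel} G_v \neq \emptyset$, set
$$\mathcal{Y}_v = \{\partial_{rel} G_w : w \in V(T),\ \pi(w) = v\}, \qquad \mathcal{Y} = \bigsqcup_v \mathcal{Y}_v,$$
and verify the five conditions of Theorem \ref{dense-amalgam-char} for $\mathcal{Y}$. Countability of $\mathcal{Y}$ and disjointness are immediate from Lemma \ref{lemma-countable-vertices-cayley-abels-graph}, and nullness, the boundary-subset property, and separability by $\mathcal{Y}$-saturated clopen sets carry over essentially verbatim from Proposition \ref{main-prop}. The substantive new point is condition $(4)$, density of each $\mathcal{Y}_v$ in $\delta(X^h)$: the infinite-rough-ends hypothesis on $G_1$ together with compactness of edge groups forces $\partial T$ to be infinite, and connectedness of $\mathcal{Z}_1$ ensures that every vertex type $v$ is realized by vertices of $T$ inside every cone neighborhood $p^{-1}(\overline{T_n(\eta)})$, from which density follows. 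This gives the desired homeomorphism $\delta(X^h) \cong \widetilde{\sqcup}\bigl(\partial_{rel} G_v : v \in V(\mathcal{Z}_1),\ \partial_{rel} G_v \neq \emptyset\bigr)$.

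The same argument applies to $G_2$, and by \cite[Proposition 3.1]{swiatkowski-dense-amalgam} the two dense amalgams are homeomorphic once $h(\mathcal{G}, \mathcal{Z}_1) = h(\mathcal{G}, \mathcal{Z}_2)$. The degenerate case in which every vertex group is compact must be handled separately: then each Cayley-Abels graph of $G_i$ is quasi-isometric to the corresponding Bass-Serre tree, which is a locally finite tree with infinitely many ends by the infinite-rough-ends hypothesis, so both Bowditch boundaries are Cantor sets. The main obstacle I anticipate is the careful verification of the density condition $(4)$ for arbitrary underlying graphs, paying attention to the fact that vertices of $T$ may have unequal valence at different types; the infinite-rough-ends hypothesis is precisely what rules out the low-valence pathologies of Remark \ref{remark-homeo-amalgam} and makes the density argument go through.
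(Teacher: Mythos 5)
Your proposal is correct, but it takes a genuinely different route from the paper's. The paper proves Theorem \ref{theorem-rel-hyp-homeo-type-general} in one line, by viewing $G_i$ as an iterated amalgamated free product and HNN extension and applying Theorems \ref{theorem-rel-hyp-homeo-type-amal-case} and \ref{theorem-rel-hyp-homeo-type-hnn} inductively on the edges of $\mathcal Z_i$; no further details are given. You instead run the dense-amalgam argument once, globally: you build the tree of augmented spaces over the full Bass--Serre tree of $(\mathcal G,\mathcal Z_i)$ and verify conditions $(1)$--$(5)$ of Theorem \ref{dense-amalgam-char} for the family of all stabilizer boundaries, concluding directly that $\partial_{rel}G_i$ is the dense amalgam of the distinct non-empty vertex types, after which \cite[Proposition 3.1]{swiatkowski-dense-amalgam} finishes the job. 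Your version is longer to write down but arguably more robust: the paper's induction silently requires that the intermediate fundamental groups of subgraphs of groups again satisfy the hypotheses of the two base theorems (in particular that their rough ends are infinite and that the intermediate Bowditch boundaries of the two decompositions can be matched up), and it needs the rearrangement/absorption properties of dense amalgams from \cite{swiatkowski-dense-amalgam} to collapse the iterated amalgams; your direct argument sidesteps all of this and yields the sharper intermediate statement $\partial_{rel}G_i\cong\widetilde{\sqcup}\bigl(\partial_{rel}G_v : v\in V(\mathcal Z_i),\ \partial_{rel}G_v\neq\emptyset\bigr)$, which is the real content. The one step that genuinely requires new care relative to Proposition \ref{main-prop} is the one you flag, namely density of each $\mathcal Y_v$; your appeal to cocompactness of the action on $T$, connectedness of $\mathcal Z_i$, and the infinite-rough-ends hypothesis (which rules out $T$ being bounded or a line) is the right way to handle it, and your separate treatment of the all-compact-vertex-groups case matches Case 1 of the paper's amalgam argument.
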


Since every hyperbolic TDLC group is relatively hyperbolic with respect to the empty collection of subgroups, we obtain the following:
\begin{corollary}\label{corollary-hyp-homeo-type-amalgam}
    Suppose $G=A\ast_C B$ and $G'=A'\ast_{C'} B'$ are amalgamated free products of compactly generated hyperbolic TDLC groups such that $C$ and $C'$ are compact, and the rough ends of $G$ and $G'$ are infinite. Suppose the Gromov boundary of $A$ and $B$ is homeomorphic to the Gromov boundary of $A'$ and $B'$, respectively.
    Then, the Gromov boundary of $G$ is homeomorphic to the Gromov boundary of $G'$.    \qed
    \end{corollary}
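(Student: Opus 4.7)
The plan is to derive the corollary as a direct specialization of Theorem \ref{theorem-rel-hyp-homeo-type-amal-case} by viewing every hyperbolic TDLC group as relatively hyperbolic with respect to the empty collection of subgroups.

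First, I would observe that if $(G,\mathcal H)$ is a proper pair with $\mathcal H = \emptyset$, then the augmented Cayley-Abels graph $X^h$ from Definition \ref{definition-augmented-space} carries no combinatorial horoballs, so $X^h$ coincides with the Cayley-Abels graph $X$ itself. Consequently, when $G$ is a hyperbolic TDLC group, $X^h = X$ is Gromov hyperbolic, so $G$ is (TDRH-II) hyperbolic relative to $\emptyset$, and the Bowditch boundary $\partial_{rel}(G)$ with respect to $\emptyset$ is literally the Gromov boundary $\partial G$.

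Next, apply Theorem \ref{rel-combination-theorem-amalgam} with $\mathcal H_A = \mathcal H_B = \emptyset$ and with $\mathcal H_{A'} = \mathcal H_{B'} = \emptyset$ (which is permitted since $A, B, A', B'$ are compactly generated in the usual sense, hence compactly generated relative to the empty collection). This yields that $G = A\ast_C B$ and $G' = A'\ast_{C'} B'$ are hyperbolic relative to the empty collection, and therefore hyperbolic TDLC groups, with Bowditch boundaries again equal to their Gromov boundaries by the observation above.

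Finally, under the identification of Gromov boundaries with Bowditch boundaries relative to $\emptyset$, the hypothesis that $\partial A \cong \partial A'$ and $\partial B \cong \partial B'$ becomes exactly the hypothesis of Theorem \ref{theorem-rel-hyp-homeo-type-amal-case}, and the conclusion that $\partial_{rel}(G) \cong \partial_{rel}(G')$ translates to $\partial G \cong \partial G'$, as required. There is essentially no obstacle here: the only point requiring care is the trivial verification that the empty peripheral structure produces no horoballs in the construction of $X^h$, so the corollary is a formal consequence of Theorem \ref{theorem-rel-hyp-homeo-type-amal-case}.
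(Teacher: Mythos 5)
Your proposal is correct and matches the paper's own (one-line) derivation: the paper obtains this corollary precisely by noting that a hyperbolic TDLC group is hyperbolic relative to the empty collection and then specializing Theorem \ref{theorem-rel-hyp-homeo-type-amal-case}. Your added remark that the empty peripheral structure yields no horoballs, so the augmented Cayley-Abels graph equals the Cayley-Abels graph and the Bowditch boundary is the Gromov boundary, is exactly the (implicit) verification the paper relies on.
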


    \begin{corollary}\label{corollary-hyp-homeo-type-hnn}
    Suppose $G=A\ast_C$ and $G'=A'\ast_{C'}$ are HNN extensions of compactly generated hyperbolic TDLC groups such that $C$ and $C'$ are compact, and the rough ends of $G$ and $G'$ are infinite. Suppose the Gromov boundary of $A$ is homeomorphic to the Gromov boundary of $A'$.
    Then, the Gromov boundary of $G$ is homeomorphic to the Gromov boundary of $G'$.    \qed
    \end{corollary}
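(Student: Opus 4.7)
The plan is to deduce Corollary \ref{corollary-hyp-homeo-type-hnn} as an immediate specialization of Theorem \ref{theorem-rel-hyp-homeo-type-hnn} to the case when the peripheral collection is empty. The key observation, already noted just after the proof of Theorem \ref{rel-combination-theorem-amalgam}, is that a compactly generated TDLC group is hyperbolic if and only if it is hyperbolic relative to $\mathcal H = \emptyset$. Indeed, when $\mathcal H = \emptyset$ no horoballs are attached, so the augmented Cayley-Abels graph $X^h$ coincides with the ordinary Cayley-Abels graph $X$, which by \cite[Theorem 2.9]{kron-moller} is quasi-isometric to any Cayley-Abels graph of the group. Consequently the Bowditch boundary $\partial_{rel}G$ with respect to $\emptyset$ is canonically identified with the Gromov boundary $\partial G$.

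First, I would apply this observation to the vertex groups: $A$ and $A'$ are hyperbolic TDLC groups, hence are hyperbolic relative to $\mathcal H_A := \emptyset$ and $\mathcal H_{A'} := \emptyset$, and their Bowditch boundaries with respect to these empty collections are precisely $\partial A$ and $\partial A'$. The hypothesis that $\partial A \cong \partial A'$ therefore translates to the hypothesis that the Bowditch boundaries of $(A, \emptyset)$ and $(A', \emptyset)$ are homeomorphic.

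Next, I would invoke Theorem \ref{theorem-rel-combination-theorem-hnn} to conclude that $G = A \ast_C$ and $G' = A' \ast_{C'}$ are hyperbolic relative to $\emptyset$ (using that $C$ and $C'$ are compact), and then apply Theorem \ref{theorem-rel-hyp-homeo-type-hnn}: since $C, C'$ are compact and the rough ends of $G$ and $G'$ are infinite, the Bowditch boundaries of $G$ and $G'$ with respect to $\emptyset$ are homeomorphic. Reapplying the identification in the first paragraph to $G$ and $G'$, these Bowditch boundaries are precisely the Gromov boundaries $\partial G$ and $\partial G'$, which completes the argument.

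The proof is thus essentially a bookkeeping exercise that reduces the hyperbolic statement to its relatively hyperbolic counterpart. The only real point requiring care is to verify that the identification ``empty peripheral structure yields the Gromov boundary'' holds uniformly for both the vertex groups and for the HNN extensions themselves, so that the hypotheses and conclusion of Theorem \ref{theorem-rel-hyp-homeo-type-hnn} translate correctly; as noted, this identification follows directly from the definitions of $X^h$ and of $\partial_{rel}$ together with the quasi-isometric uniqueness of Cayley-Abels graphs. There is no substantive new obstacle beyond this verification.
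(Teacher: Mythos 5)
Your proposal is correct and follows exactly the paper's route: the paper derives this corollary by observing that a hyperbolic TDLC group is hyperbolic relative to the empty collection, with Bowditch boundary equal to the Gromov boundary, and then specializing Theorem \ref{theorem-rel-hyp-homeo-type-hnn}. Your write-up simply makes explicit the identification of $\partial_{rel}$ with $\partial$ for $\mathcal H=\emptyset$, which the paper treats as immediate.
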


 Using induction and combining the previous two corollaries, one can have a similar statement as Theorem \ref{theorem-rel-hyp-homeo-type-general}. 
    
\section{Connectedness of the Bowditch boundary}\label{section-connected-bowditch-boundary}
This section is devoted to proving Theorem \ref{theorem-connectedness-bowditch-boundary}, which is well known for discrete relatively hyperbolic groups (\cite[Proposition 10.1]{bowditch-relhyp}). In this section, all relatively hyperbolic TDLC groups are in the sense of TDRH-II.

Let $G$ and $\mathcal H$ be as in Theorem \ref{theorem-connectedness-bowditch-boundary}. Before we start the proof, we need a little preparation. Let $X$ and $X^h$ be a Cayley-Abels graph, and an augmented Cayley-Abels graph of $G$, respectively, as in Subsection \ref{subsection-tdrh-II}. Let $\partial_{rel}G$ be the Bowditch boundary of $G$ with respect to $\mathcal H.$ Note that $\overline{X^h}:=X^h\cup\partial_{rel}G$ is a compact metrizable space, and $X^h$ is dense in $\overline{X^h}$. Define $$\overline{G}:=V(X)\cup \partial_{rel}G.$$ We denote the induced metric on $\overline{G}$ by $d_{\Delta}$. Define a map $\pi:V(X)\to \partial_{rel}G$ as 
\begin{equation}
	\pi(v) = \xi, \text{ where } \xi \in \partial_{rel}G \text{ such that } d_{\triangle}(v, \xi) = d_{\Delta}(v,\partial_{rel}G)
\end{equation}

Note that (a) such a $\xi$ exists as $\partial_{rel}G$ is compact, (b) $\xi$ may not be unique; however, uniqueness is not required for our purposes.

\begin{lemma}\label{lemma-dense-lemma}
    If $\{v_i\}\subset V(X)$ is a sequence converging to $\xi\in\partial_{rel}G$ with respect to $d_{\Delta}$, then $\{\pi(v_i)\}$ also converges to $\xi$. 
	
\end{lemma}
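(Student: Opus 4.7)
The plan is to prove this by a direct application of the triangle inequality, using the defining property of $\pi$ as a nearest-point projection. The key observation is that $\pi(v)$ is selected precisely so that $d_{\Delta}(v,\pi(v))$ realizes the distance $d_{\Delta}(v,\partial_{rel}G)$. Consequently, for any $\xi \in \partial_{rel}G$ one has the a priori bound $d_{\Delta}(v,\pi(v)) \le d_{\Delta}(v,\xi)$, which is what lets us transfer convergence from the original sequence to its image under $\pi$.

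First I would fix the sequence $\{v_i\} \subset V(X)$ with $v_i \to \xi$ in $d_{\Delta}$, so that $d_{\Delta}(v_i,\xi) \to 0$. From the definition of $\pi$ displayed in (1),
\[
d_{\Delta}(v_i,\pi(v_i)) \;=\; d_{\Delta}(v_i,\partial_{rel}G) \;\le\; d_{\Delta}(v_i,\xi),
\]
because $\xi$ itself lies in $\partial_{rel}G$. Hence $d_{\Delta}(v_i,\pi(v_i)) \to 0$ as well.

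Next, applying the triangle inequality in the compact metric space $(\overline{X^h},d_{\Delta})$,
\[
d_{\Delta}(\pi(v_i),\xi) \;\le\; d_{\Delta}(\pi(v_i),v_i) + d_{\Delta}(v_i,\xi) \;\longrightarrow\; 0,
\]
so $\pi(v_i) \to \xi$ in $\overline{G}$, which is the desired conclusion.

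There is no genuine obstacle here: the statement is essentially a formal property of nearest-point projections onto a compact subset of a metric space, and the only subtlety is noting that the (possibly non-unique) choice of $\pi(v)$ does not matter, since the inequality $d_{\Delta}(v,\pi(v)) \le d_{\Delta}(v,\xi)$ holds for every admissible choice. The lemma will be used later to ensure that the image $\pi(V(X))$ is dense in $\partial_{rel}G$, which is the reason we formulate it in this form.
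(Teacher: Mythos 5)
Your proof is correct and follows exactly the paper's argument: the nearest-point property gives $d_{\Delta}(v_i,\pi(v_i))\le d_{\Delta}(v_i,\xi)\to 0$, and the triangle inequality finishes. You merely spell out the details that the paper leaves implicit.
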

\begin{proof}
    From the definition of $\pi$, we have $d_{\triangle}(v_i,\pi(v_i))\to 0$ as $i\to\infty$. An application of the triangle inequality then implies that $\pi(v_i)\to \xi$ as $i\to\infty$.
\end{proof}
Now, we are ready to prove Theorem \ref{theorem-connectedness-bowditch-boundary}.
\vspace{.2cm}

Proof of Theorem \ref{theorem-connectedness-bowditch-boundary}: Let $\pi$ be the map as defined in Equation (1). Let if possible $$\partial_{rel}G=U_1\sqcup U_2$$ where $U_1$ and $U_2$ are non-empty disjoint open subsets of $\partial_{rel}G$. Let $D_i=\pi^{-1}(U_i)$ for $i=1,2$. Denote the closure of $D_i$ in $\overline{G}$ by cl$(D_i)$ (with respect to the topology induced by $d_{\Delta}$).

{\bf Claim:} $D_1$ and $D_2$ are non-empty, disjoint, and satisfy cl$(D_i)=D_i\cup U_i$ for $i=1,2$.

By Lemma \ref{lemma-dense-lemma}, it is clear that cl$(D_i)\subseteq D_i \cup U_i$. Conversely, let $\xi \in U_1$. Since $V(X)$ is dense in $\overline{G}$, let $\{v_n\}$ be a sequence in $V(X)$ such that $v_n \to \xi$ (in the original topology of $\overline{G}$). As the topology induced by $d_{\Delta}$ on $\partial_{rel}G$ is same as the original topology on $\partial_{rel}G$, $\{v_n\}\to \xi$ with respect to the metric $d_{\Delta}$. Suppose there exists a subsequence of $\{v_{n}\}$ contained in $D_2$. Then, $\xi \in$ cl$(D_2)$, which in turn implies that $\xi \in U_2$. This gives a contradiction as $U_1$ and $U_2$ are disjoint. Hence, $\{v_n\}$ is eventually contained in $D_1$ and therefore, $\xi\in$ cl$(D_1)$. Similarly, one can show that $U_2\subseteq$ cl$(D_2)$. This also shows that $D_1$ and $D_2$ are non-empty, disjoint subsets of $V(X)$. Hence, the claim.

Since $U_1$ and $U_2$ are non-empty, let $\xi_1 \in U_1$ and $\xi_2 \in U_2$. As in the proof of the converse of the claim, there exist sequences $\{v_n\}$ and $\{w_n\}$ in $D_1$ and $D_2$, respectively, such that $v_n \to \xi_1$ and $w_n \to \xi_2$. Let $d$ denote the Cayley-Abels graph metric on $X$. Since $d$-balls in $X$ have only finitely many vertices of $V(X)$, up to passing to subsequences, we can assume that $\{v_n\}$ and $\{w_n\}$ are unbounded in $X$. For every $m\in \mathbb{N}$, denote $K_m$ the closed $d$-metric ball of radius $m$ about the identity coset (coset of compact open subgroup whose representative is identity) which is an element of $V(X)$. Since $G$ has one rough end, there exist subsequences $\{v_{n_m}\}$ and $\{w_{n_m}\}$ and a sequence of paths $\{\gamma_{m}\}$ joining $v_{n_m}$ and $w_{n_m}$ such that $\gamma_m$ is contained in $X\setminus K_m$. Let $e_m=[a_m,b_m]$ be an edge in $\gamma_{m}$ such that $a_m\in D_1$ and $b_m\in D_2$ (such an edge always exists). Note that the sequence $\{a_m\}$ is unbounded in $X$. Since $\overline{G}$ is compact, there exists a subsequence $\{a_{m_k}\}$ that converges to $\eta \in \partial_{rel}G$. Since $d(a_{m_k},b_{m_k})=1$, it follows $b_{m_k} \to \eta$ in $\overline{G}$. Note that $\{a_{m_k}\}\to \eta$ too in the topology induced by the metric $d_{\Delta}$ ($d_{\Delta}$ induces the original topology on $\partial_{rel}G$. Thus, $\eta\in U_1$. By the same logic, $\eta \in U_2$. This gives us a contradiction as $U_1$ and $U_2$ are disjoint. Hence, we have the desired result. 
\qed
\section{Applications}\label{section-applications}
This section is devoted to proving partial converses of Theorem \ref{theorem-rel-hyp-homeo-type-amal-case} and Theorem \ref{theorem-rel-hyp-homeo-type-hnn}. For that, first of all, we analyze the connected component of the Bowditch boundary of a relatively hyperbolic amalgamated free product or HNN extension of relatively hyperbolic TDLC groups.

\begin{proposition}\label{proposition-connected-component}
Let $G$ be as in Theorem \ref{theorem-rel-hyp-homeo-type-amal-case} or Theorem \ref{theorem-rel-hyp-homeo-type-hnn} with Bass-Serre tree $T$. Additionally, assume that $A$ and $B$ have one rough end. Then, we have the following:
\begin{enumerate}
    \item Each $\eta\in\partial T$ gives a connected component of $\partial G$.
    \item For each $v\in V(T)$, the Bowditch boundary $\partial G_v$ is a connected component of $\partial G$.
\end{enumerate}
\end{proposition}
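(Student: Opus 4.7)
The strategy is the same for both parts: produce families of clopen subsets of $\partial G := \delta(X^h)$ by pulling back open-closed pieces of the end-compactification $\overline{T}$ through the projection $p : \overline{X^h} \to \overline{T}$. Two preliminary observations will be used throughout. First, inspection of the explicit neighborhood bases $V_U(\xi)$ (for $\xi \in \partial G_v$) and $V_n(\eta)$ (for $\eta \in \partial T$) shows that $p$ restricted to $\partial G$ is continuous into $\overline{T}$. Second, each stabilizer $G_v$ is topologically conjugate to $A$ or $B$ (in the amalgam case) or to $A$ (in the HNN case), hence is compactly generated, relatively hyperbolic, and has one rough end by hypothesis; Theorem \ref{theorem-connectedness-bowditch-boundary} then yields that $\partial G_v$ is connected.

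For (2), I plan to enumerate the edges of $T$ at $v$ as $\{e_i\}_i$, let $T_i$ be the unique maximal subtree meeting $v$ only in $e_i$, and set $U_i := (T_i \setminus \{v\}) \cup \partial T_i$. Each $U_i$ is open in $\overline{T}$ and the $U_i$ partition $\overline{T} \setminus \{v\}$, so $B_i := p^{-1}(U_i) \cap \partial G$ gives a disjoint decomposition $\partial G = \partial G_v \sqcup \bigsqcup_i B_i$, with each $B_i$ automatically open in $\partial G$ by continuity of $p$. The crux is to check that each $B_i$ is also \emph{closed} in $\partial G$, for which it suffices to show that every $\xi \in \partial G_v$ admits a basic neighborhood disjoint from $B_i$. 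The structural input is that any $T$-geodesic from $v$ into $U_i$ begins with the edge $e_i$, and the lift of $e_i$ in $X^h$ meets $X_v^h$ at a single honest vertex $u_i$; so membership $z \in V_U(\xi)$ with $p(z) \in U_i$ forces $u_i \in U$. Since $u_i \in X_v^h$ and $\xi \in \partial_{rel} G_v$ are distinct points of the Hausdorff space $\overline{X_v^h}$, shrinking $U$ excludes $u_i$ and yields $V_U(\xi) \cap B_i = \emptyset$. Thus each $B_i$ is clopen, and any connected subset of $\partial G$ containing the connected set $\partial G_v$ cannot meet any $B_i$ without becoming disconnected; this proves (2).

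For (1), fix $\eta \in \partial T$ and let $v_0, v_1, v_2, \dots$ be the vertices on $[v_0, \eta)$ with $e_k := [v_{k-1}, v_k]$. Let $T_k^+$ be the component of $T \setminus \{v_{k-1}\}$ containing $v_k$, and set $S_k := p^{-1}(T_k^+ \cup \partial T_k^+) \cap \partial G$. The same two-step argument (openness by continuity, closedness by the attachment argument applied at $v_{k-1}$ with the lift of $e_k$) shows each $S_k$ is clopen, and $\eta \in S_k$ for every $k$. Any other point $\xi \in \partial G_w$ satisfies $w \notin T_k^+$ once $v_{k-1}$ lies strictly past $w$ on the ray, so $\xi \notin S_k$; and any other $\xi \in \partial T$ has its ray $[v_0,\xi)$ diverge from $[v_0,\eta)$ at some finite level, after which $\xi \notin \partial T_k^+$. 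Hence $\bigcap_k S_k = \{\eta\}$, so $\{\eta\}$ is a quasi-component of the compact Hausdorff space $\partial G$; since quasi-components coincide with connected components in compact Hausdorff spaces, we obtain (1).

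The main obstacle throughout is the closedness half of the clopen claim for the $B_i$ (and identically for the $S_k$): one must rule out a sequence contained in a single branch accumulating onto an ideal point of an adjacent vertex space. This genuinely requires the explicit form of the neighborhood basis $V_U(\xi)$ together with the fact that each edge of $T$ lifts to an edge glued to an \emph{honest} vertex of the vertex space rather than to an ideal point; the rest of the proof reduces to routine topology in compact Hausdorff spaces.
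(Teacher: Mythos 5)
Your argument is essentially correct, and it supplies in full the details that the paper itself does not write out: the paper's ``proof'' consists of invoking Theorem \ref{theorem-connectedness-bowditch-boundary} for the connectedness of each $\partial G_v$ and then deferring to the proofs of Propositions 6.3 and 6.4 of Martin--\`{S}wi\polhk{a}tkowski. Your clopen branch decomposition at a vertex, the separation of $\partial G_v$ from the branches via the neighborhood basis $V_U(\xi)$, and the identification of $\{\eta\}$ as a quasi-component (hence a component, since $\partial G$ is compact Hausdorff) are exactly the ingredients that argument uses, so you are reconstructing the intended proof rather than replacing it.

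One local repair is needed. Your claim that each $B_i=p^{-1}(U_i)\cap\partial G$ is \emph{open} ``automatically by continuity of $p$'' is not justified: $p$ restricted to $\partial G$ is not continuous into $\overline{T}$ at points $\xi\in\partial G_w$, because every basic neighborhood $V_U(\xi)$ contains points lying over entire branches of $T$ at $w$, hence over vertices arbitrarily far from $w=p(\xi)$. Openness of $B_i$ at a point $\xi\in\partial G_w$ with $w\in U_i$ therefore requires the same attachment argument you use for closedness: the only branch at $w$ that leaves $U_i$ is the one containing $v$, its connecting edge is glued to a single honest vertex $u$ of $X_w^h$, and shrinking $U$ to exclude $u$ forces $V_U(\xi)\cap\partial G\subset B_i$ (for $\xi\in\partial T_i$ one instead uses $V_n(\xi)$ for $n$ large). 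The same remark applies to the sets $S_k$ in part (1). Since this is precisely the lemma you already isolate and prove, the fix is cosmetic and the overall structure of the proof stands.
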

By Theorem \ref{theorem-connectedness-bowditch-boundary}, $\partial_{rel} G$ is a connected topological space. Thus, the proof of this proposition follows from the proof of \cite[Propositions 6.3 and 6.4]{martin-swiat}. Hence, we skip its proof. For an amalgamated free product $A\ast_C B$ or an HNN extension $A\ast_C$ of relatively hyperbolic TDLC groups, $h(A\ast_C B)$ and $h(A\ast_C)$ denote the set of homeomorphism types of the non-empty Bowditch boundary of the vertex groups. We are now ready to prove the following.
\begin{theorem}\label{theorem-converse-homeo-type-rel-amal}
Suppose $G=A\ast_C B$ and $G'=A'\ast_{C'} B'$ are as in Theorem \ref{theorem-rel-hyp-homeo-type-amal-case}. Additionally, assume that $A, B, A'$, and $B'$ have one rough end. If $\partial_{rel}G$ is homeomorphic to $\partial_{rel}G'$ then $h(A\ast_C B)=h(A'\ast_{C'}B').$    
\end{theorem}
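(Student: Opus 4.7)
The plan is to recover the set $h(A\ast_C B)$ intrinsically from the topology of $\partial_{rel}G$ by analyzing its connected components, and similarly for $G'$, so that a homeomorphism $\partial_{rel}G\to \partial_{rel}G'$ directly implies the equality of the two $h$-sets. The main tool is Proposition \ref{proposition-connected-component}, whose hypotheses are satisfied under the additional assumption that $A,B,A',B'$ have one rough end.

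First, since $A,B,A',B'$ have one rough end, each is non-compact, so its augmented Cayley-Abels graph is infinite, locally finite, and hyperbolic, giving a non-empty Bowditch boundary; Theorem \ref{theorem-connectedness-bowditch-boundary} then says that each of $\partial_{rel}A$, $\partial_{rel}B$, $\partial_{rel}A'$, $\partial_{rel}B'$ is connected. Consequently $h(A\ast_C B)=\{[\partial_{rel}A],[\partial_{rel}B]\}$ and $h(A'\ast_{C'}B')=\{[\partial_{rel}A'],[\partial_{rel}B']\}$. Next, Proposition \ref{proposition-connected-component} lists the connected components of $\partial_{rel}G$: the singletons $\{\eta\}$ for $\eta\in\partial T$, together with the boundaries $\partial_{rel}G_v$ for $v\in V(T)$, where each $\partial_{rel}G_v$ is homeomorphic to $\partial_{rel}A$ if $\pi(v)=v_A$ and to $\partial_{rel}B$ if $\pi(v)=v_B$. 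An analogous description holds for $\partial_{rel}G'$ in terms of $T'$ and the primed data.

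Given a homeomorphism $f\colon\partial_{rel}G\to \partial_{rel}G'$, $f$ carries connected components bijectively to connected components and preserves their homeomorphism types. In the generic case, where none of $\partial_{rel}A$, $\partial_{rel}B$, $\partial_{rel}A'$, $\partial_{rel}B'$ is a single point, the non-singleton connected components on each side are precisely the translates of the vertex-group boundaries, and so $f$ matches up the sets of their homeomorphism types, giving $\{[\partial_{rel}A],[\partial_{rel}B]\}=\{[\partial_{rel}A'],[\partial_{rel}B']\}$, that is, $h(A\ast_C B)=h(A'\ast_{C'}B')$.

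The main obstacle is the degenerate case where some vertex-group boundary degenerates to a single point: then its $G$-orbit contributes singleton connected components to $\partial_{rel}G$ that are topologically indistinguishable from $\partial T$-singletons at the level of a single component. To resolve this, I would use the dense amalgam description $\partial_{rel}G\simeq \widetilde{\sqcup}(\partial_{rel}A,\partial_{rel}B)$ from Proposition \ref{main-prop} together with Theorem \ref{dense-amalgam-char}, arguing that a maximal null family of pairwise disjoint closed subsets of $\partial_{rel}G$ satisfying the regularity conditions (1)--(5) is essentially canonical; this recovers the unordered set of homeomorphism types of the ``pieces'' intrinsically from $\partial_{rel}G$. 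Combining this with the component-level matching above upgrades the generic-case conclusion to the full equality $h(A\ast_C B)=h(A'\ast_{C'}B')$ in all cases.
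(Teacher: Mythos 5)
Your argument is essentially the paper's own proof: the paper likewise invokes Proposition \ref{proposition-connected-component} to identify the vertex-group boundaries $\partial_{rel}G_v$ as connected components of $\partial_{rel}G$ and $\partial_{rel}G'$ and concludes immediately because a homeomorphism carries connected components to connected components. The degenerate possibility you flag (a vertex-group boundary collapsing to a single point, indistinguishable at the component level from an end of the Bass--Serre tree) is a genuine subtlety that the paper's two-line proof passes over in silence, so your additional dense-amalgam discussion is extra care on the same route rather than a different method, though as written it is only a sketch.
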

\begin{proof}
    Suppose $\partial_{rel} G$ is homeomorphic to $\partial_{rel} G'$. Let $T$ and $T'$ be the Bass-Serre trees of $G$ and $G'$. By Proposition \ref{proposition-connected-component}, for each $v\in V(T)$ and $v'\in V(T')$, $\partial G_v$ and $\partial G_{v'}$ are connected components in $\partial G$ and $\partial G'$, respectively. Since a homeomorphism preserves connected components, the theorem follows.
\end{proof}
A similar proof also holds for the HNN extension of relatively hyperbolic TDLC groups. Thus, we omit the details, but we include the statement for the sake of completeness.
\begin{theorem}\label{converse-homeo-type-rel-hnn}
Suppose $G=A\ast_C$ and $G=A'\ast_{C'}$ are as in Theorem \ref{theorem-rel-hyp-homeo-type-hnn}. Additionally, assume that $A$ and $A'$ have one rough end. If $\partial_{rel}G$ is homeomorhic to $\partial_{rel} G'$, then $h(A\ast_C)=h(A'\ast_{C'}).$ 
\qed
\end{theorem}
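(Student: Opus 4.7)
The plan is to run the proof of Theorem~\ref{theorem-converse-homeo-type-rel-amal} verbatim, with the Bass--Serre tree of the HNN extension replacing that of the amalgam. Let $T$ and $T'$ denote the Bass--Serre trees of $G=A\ast_C$ and $G'=A'\ast_{C'}$, respectively. By Theorem~\ref{theorem-rel-combination-theorem-hnn}, both $G$ and $G'$ are relatively hyperbolic, so $\partial_{rel}G$ and $\partial_{rel}G'$ are well-defined. In the HNN setting every vertex stabilizer $G_v$, $v\in V(T)$, is conjugate in $G$ to the base group $A$; consequently $\partial_{rel}G_v$ is canonically homeomorphic to $\partial_{rel}A$ for all $v\in V(T)$, and similarly $\partial_{rel}G'_{v'}\cong\partial_{rel}A'$ for all $v'\in V(T')$. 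Thus $h(A\ast_C)$ equals either $\{[\partial_{rel}A]\}$ or $\emptyset$ (the latter exactly when $A$ is compact), and analogously for $G'$.

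Next, since $A$ and $A'$ each have one rough end, Proposition~\ref{proposition-connected-component} applies (its hypotheses cover the HNN case uniformly). It describes the connected components of $\partial_{rel}G$ as the family $\{\partial_{rel}G_v : v\in V(T)\}$ together with the singletons $\{\eta\}$, $\eta\in\partial T$, and likewise for $\partial_{rel}G'$. Given a homeomorphism $\phi\colon\partial_{rel}G\to\partial_{rel}G'$, the induced bijection on connected components sends each $\partial_{rel}G_v$ homeomorphically onto some component of $\partial_{rel}G'$. When $\partial_{rel}A$ is neither empty nor a single point, this image component must be non-singleton and hence of the form $\partial_{rel}G'_{v'}$, giving $\partial_{rel}A\cong\partial_{rel}G_v\cong\partial_{rel}G'_{v'}\cong\partial_{rel}A'$, which yields $h(A\ast_C)=h(A'\ast_{C'})$.

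The step requiring extra care, as in the amalgam case, is the degenerate situation in which $\partial_{rel}A$ is empty or a single point, since there the non-singleton component bookkeeping collapses. I would dispatch this by a separate observation: if $A$ is compact then the Cayley--Abels graph of $G$ is quasi-isometric to $T$, so $\partial_{rel}G$ is totally disconnected; preservation under $\phi$ combined with Theorem~\ref{theorem-connectedness-bowditch-boundary} applied to the one-rough-ended $A'$ then forces $A'$ to be compact as well, giving $h(A\ast_C)=h(A'\ast_{C'})=\emptyset$. The residual single-point case is handled symmetrically by tracking that $\phi$ identifies the two singleton types across $T$ and $T'$. This degenerate analysis is the main subtlety concealed behind the author's ``similar proof'' remark, but it adds no new geometric input beyond what is already used in Theorem~\ref{theorem-converse-homeo-type-rel-amal}.
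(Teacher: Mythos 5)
Your argument is correct and follows the paper's own route exactly: the paper disposes of the HNN case by the same two steps as Theorem~\ref{theorem-converse-homeo-type-rel-amal}, namely Proposition~\ref{proposition-connected-component} (whose statement already covers the HNN setting) plus the fact that a homeomorphism preserves connected components. Your separate treatment of the case where $A$ is compact is vacuous --- the added hypothesis that $A$ has one rough end already rules out compactness, so $\partial_{rel}A\neq\emptyset$ --- but this does no harm, and your handling of the single-point case is a sound (if unstated in the paper) clarification rather than a departure.
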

Since a hyperbolic TDLC group is hyperbolic relative to an empty collection of subgroups, we also obtain partial converses of Corollaries \ref{corollary-hyp-homeo-type-amalgam} and \ref{corollary-hyp-homeo-type-hnn}.

\begin{theorem}\label{theroem-converse-homeo-type-hyp}
$(1)$ Suppose $G=A\ast_C B$ and $G'=A'\ast_C B'$ are as in Corollary \ref{corollary-hyp-homeo-type-amalgam}. Additionally, assume that $A,B$, and $A',B'$ have one rough end. If $\partial G$ is homeomorphic to $\partial G'$, then $h(A\ast_C B)=h(A'\ast_{C'} B')$.

$(2)$ Suppose $G=A\ast_C$ and $G'=A'\ast_{C'}$ are as in Corollary \ref{corollary-hyp-homeo-type-hnn}. Additionally, assume that $A$ and $A'$ have one rough end. If $\partial G$ is homeomorphic to $\partial G'$, then $h(A\ast_C)=h(A'\ast_{C'})$.
\qed
\end{theorem}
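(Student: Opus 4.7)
The plan is to reduce both parts of the statement to the already established relatively hyperbolic versions, Theorem \ref{theorem-converse-homeo-type-rel-amal} and Theorem \ref{converse-homeo-type-rel-hnn}. The key observation is that a hyperbolic TDLC group is hyperbolic relative to the empty collection of subgroups: indeed, when $\mathcal H = \emptyset$, the augmented Cayley-Abels graph $X^h$ coincides with the usual Cayley-Abels graph $X$ (there are no combinatorial horoballs to attach), and the Bowditch boundary $\partial_{rel} G$ is, by definition, the Gromov boundary of $X$, i.e.\ the Gromov boundary $\partial G$.

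Next, for part (1) I would verify that $G = A \ast_C B$ and $G' = A' \ast_{C'} B'$ satisfy the hypotheses of Theorem \ref{theorem-converse-homeo-type-rel-amal} with $\mathcal H_A = \mathcal H_B = \mathcal H_{A'}' = \mathcal H_{B'}' = \emptyset$. By Corollary \ref{corollary-combination-amalgam-hyp-tdlc}, $G$ and $G'$ are hyperbolic (equivalently, hyperbolic relative to the empty collection), so the combination hypothesis of Theorem \ref{theorem-rel-hyp-homeo-type-amal-case} holds. The rough-end assumption on $G, G'$ is given, and the extra hypothesis that the vertex groups $A, B, A', B'$ have one rough end is also given. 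Under the identification of $\partial_{rel}$ with the Gromov boundary, the hypothesis that $\partial G$ is homeomorphic to $\partial G'$ is exactly the hypothesis of Theorem \ref{theorem-converse-homeo-type-rel-amal}. Its conclusion states that the sets of homeomorphism types of non-empty Bowditch boundaries of the vertex groups coincide, which in our setting is precisely $h(A \ast_C B) = h(A' \ast_{C'} B')$.

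For part (2) the argument is identical, using Corollary \ref{corollary-combination-hnn-hyperbolic-tdlc} to know that $G$ and $G'$ are hyperbolic, and applying Theorem \ref{converse-homeo-type-rel-hnn} with the peripheral collections taken to be empty.

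The only point that needs a brief sanity check is that the one-rough-end assumption on the vertex groups really does feed Proposition \ref{proposition-connected-component} in the hyperbolic setting: Theorem \ref{theorem-connectedness-bowditch-boundary} guarantees that the Bowditch boundary of a one-rough-ended relatively hyperbolic TDLC group is connected, and specializing to $\mathcal H = \emptyset$ this becomes the well-known statement that the Gromov boundary of a one-rough-ended hyperbolic TDLC group is connected. Hence, the Gromov boundaries $\partial A, \partial B, \partial A', \partial B'$ (when non-empty) appear as connected components of $\partial G$ and $\partial G'$, and any homeomorphism $\partial G \to \partial G'$ induces a bijection between these sets of components that respects homeomorphism type. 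I do not anticipate any genuine obstacle; the statement is a direct corollary once one identifies Gromov with Bowditch boundaries in the empty-peripheral case.
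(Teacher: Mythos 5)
Your proposal is correct and follows exactly the route the paper takes: the paper derives this theorem by observing that a hyperbolic TDLC group is hyperbolic relative to the empty collection of subgroups (so the augmented Cayley--Abels graph is the ordinary one and the Bowditch boundary is the Gromov boundary) and then specializing Theorems \ref{theorem-converse-homeo-type-rel-amal} and \ref{converse-homeo-type-rel-hnn}. Your additional checks of the hypotheses and of the role of Proposition \ref{proposition-connected-component} are consistent with, and slightly more explicit than, the paper's one-line justification.
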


Using Stallings' theorem on ends of discrete groups and accessibility of finitely presented groups, it is well known that if the Gromov boundary of a hyperbolic group $G$ is totally disconnected, then $G$ is virtually free, i.e. $G$ admits a finite graph of groups decomposition such that vertex groups are finite. We now aim to prove an analog of this for hyperbolic TDLC groups. For that, we recall the definition of accessibility for compactly generated TDLC groups. 
    
\begin{definition}\label{definition-accessible-group}
    A compactly generated TDLC group $G$ is said to be {\em accessible} if it has an action on a tree $T$ such that:
    \begin{enumerate}
        \item The number of orbits of $G$ on the edge of $T$ is finite.
        \item The stabilizers of edges in $T$ are compact open subgroups of $G$.
        \item Every stabilizer of a vertex in $T$ is a compactly generated open subgroup of $G$ and has at most one rough end.
    \end{enumerate}
\end{definition}
In \cite[Corollary 19.46]{cornulier-qi-classification}, the author proved that hyperbolic TDLC groups are accessible.

\vspace{.2cm}
Proof of Proposition \ref{proposition-total-disconnection-implies-splitting}:
        Since $G$ is a hyperbolic TDLC group, it is accessible. Thus, by Definition \ref{definition-accessible-group}, $G$ has a finite graph of groups $(\mathcal G,\mathcal Z)$ decomposition such that the edge groups are compact open in $G$, and the vertex groups are compactly generated TDLC. Note that, by Corollaries \ref{corollary-combination-amalgam-hyp-tdlc} and \ref{corollary-combination-hnn-hyperbolic-tdlc}, we see that the vertex groups of $(\mathcal G,\mathcal Z)$ are hyperbolic. Since $\partial G$ is totally disconnected, and the Gromov boundaries of vertex groups having one rough end are connected components in $\partial G$, each vertex group is compact. Hence, we are done.
        \qed
        
We end the paper with an example that demonstrates the situation in Proposition \ref{proposition-total-disconnection-implies-splitting}.

\begin{example}\label{example-aut(t_d)}
      For $d\geq 3$, let $T_d$ be a regular tree. Then, it is known that the automorphism group ${\rm Aut}(T_d)$ with compact open topology is a compactly generated TDLC group. It turns out that $T_d$ itself is a Cayley-Abels graph for ${\rm Aut}(T_d)$. Therefore, it is a hyperbolic TDLC group and the Gromov boundary of ${\rm Aut}(T_d)$ is homeomorphic to the Cantor set. Thus, by Proposition \ref{proposition-total-disconnection-implies-splitting}, ${\rm Aut}(T_d)$ admits a finite graph of groups decomposition such that the vertex groups are compact. However, this can also be seen by using Bass-Serre theory. Since $T_d$ is a Cayley-Abels graph of ${\rm Aut}(T_d)$, the vertex stabilizers are compact open and the action is transitive. So ${\rm Aut}(T_d)$ is a multiple HNN extension of the stabilizer of $T_d$. Also, by Theorem \ref{corollary-combination-amalgam-hyp-tdlc}, $G={\rm Aut}(T_d)\ast_C{\rm Aut}(T_d)$, where $C=Stab_{Aut(T_d)}(v)$ for some $v\in V(T_d)$, is a hyperbolic TDLC group. By our construction of the Gromov boundary of $G$, we see that $\partial G$ is homeomorphic to the Cantor set.

  \end{example} 

\vspace{.2cm}
\noindent
{\bf Acknowledgements.} The authors thank Eduardo Mart\`{i}nez-Pedroja for his helpful comments on an earlier draft of this paper, and for bringing our attention to his work with Hadi Bigdely.

\vspace{.2cm}
  \noindent
{\bf Conflict of interest:}
On behalf of all authors, the corresponding author states that there is no conflict of interest.

\bibliography{rp}
\bibliographystyle{amsalpha}

\end{document}